\newtheorem{thm}{Theorem}[section]
\newtheorem{lem}[thm]{Lemma}
\newtheorem{cor}[thm]{Corollary}
\newtheorem{pro}[thm]{Proposition}
\newtheorem{ex}[thm]{Example}
\theoremstyle{definition}
\newtheorem{rmk}[thm]{Remark}
\newtheorem{defi}[thm]{Definition}
\newcommand{\nc}{\newcommand}
\newcommand{\delete}[1]{}
\nc{\mlabel}[1]{\label{#1}}  
\nc{\mcite}[1]{\cite{#1}}  
\nc{\mref}[1]{\ref{#1}}  
\nc{\mbibitem}[1]{\bibitem{#1}} 
\nc{\mlabel}[1]{\label{#1}{\hfill \hspace{1cm}{\bf{{\ }\hfill(#1)}}}}
\nc{\mcite}[1]{\cite{#1}{{\em{{\ }(#1)}}}}  
\nc{\mref}[1]{\ref{#1}{{\em{{\ }(#1)}}}}  
\nc{\mbibitem}[1]{\bibitem[\em #1]{#1}} 
\newcommand {\emptycomment}[1]{}
\nc{\oprn}{\theta}
\nc{\Oprn}{\Theta}
\nc{\calo}{\mathcal{O}}
\nc{\oop}{$\mathcal{O}$-operator\xspace}
\nc{\oops}{$\mathcal{O}$-operators\xspace}
\nc{\mrho}{{\bm{\varrho}}}
\nc{\emk}{\mathbf{K}}
\nc{\invlim}{\displaystyle{\lim_{\longleftarrow}}\,}
\nc{\ot}{\otimes}
\newcommand{\be }{\begin{equation}}
\newcommand{\ee }{\end{equation}}
\newcommand{\g}{\mathfrak g}
\newcommand{\h}{\mathfrak h}
\newcommand{\m}{\mathfrak m}
\newcommand{\huaB}{\mathcal{B}}
\newcommand{\huaL}{\mathcal{L}}
\newcommand{\huaR}{\mathcal{R}}
\newcommand{\huaC}{{\mathcal{C}}}
\newcommand{\huaH}{\mathcal{H}}
\newcommand{\huaO}{{\mathcal{O}}}
\newcommand{\huaZ}{\mathcal{Z}}
\newcommand{\frkC}{\mathfrak C}
\newcommand{\frkL}{\mathfrak L}
\newcommand{\frkR}{\mathfrak R}
\newcommand{\frkT}{\mathfrak T}
\newcommand{\frkU}{\mathfrak U}
\newcommand{\frkX}{\mathfrak X}
\newcommand{\Courant}[1]{\left\llbracket  #1\right\rrbracket }
\newcommand{\Id}{{\rm{Id}}}
\newcommand{\br}[1]{   [ \cdot,    \cdot  ]   }
\newcommand{\ltp}[1]{\Courant{\cdot,\cdot,\cdot}}
\newcommand{\Hom}{\mathrm{Hom}}
\newcommand{\Ob}{\mathsf{Ob}}
\newcommand{\gl}{\mathfrak {gl}}
\newcommand{\ad}{\mathrm{ad}}
\nc{\CV}{\mathbf{C}}
\newcommand{\LYA}{Lie-Yamaguti algebra}
\begin{document}

\title[Post Lie-Yamaguti algebras, relative Rota-Baxter operators of nonzero weights]{Post Lie-Yamaguti algebras, relative Rota-Baxter operators of nonzero weights, and their deformations}

\author{Jia Zhao}
\address{Jia Zhao, School of Sciences, Nantong University, Nantong, 226019, Jiangsu, China}
\email{zhaojia@ntu.edu.cn}

\author{Senrong Xu}
\address{Senrong Xu, School of Mathematical Sciences, Jiangsu University, Zhenjiang, 212013, Jiangsu, China}
\email{senrongxu@ujs.edu.cn}

\author{Yu Qiao*}
\address{Yu Qiao (corresponding author), School of Mathematics and Statistics, Shaanxi Normal University, Xi'an, 710119, Shaanxi, China}
\email{yqiao@snnu.edu.cn}

\date{\today}

\thanks{{\em Keywords}: relative Rota-Baxter operato of weight $1$, post-Lie-Yamaguti algebra, cohomology, deformation}

\thanks{{\em Mathematics Subject Classification} (2020): Primary 17B38; Secondary 17B60,17A99}


\begin{abstract}
 In this paper, we introduce the notions of relative Rota-Baxter operators of weight $1$ on Lie-Yamaguti algebras, and post-\LYA s, which is an underlying algebraic structure of relative Rota-Baxter operators of weight $1$. We give the relationship between these two algebraic structures. Besides, we establish the cohomology theory of relative Rota-Baxter operators of weight $1$ via the Yamaguti cohomology. Consequently, we use this cohomology to characterize linear deformations of relative Rota-Baxter operators of weight $1$ on Lie-Yamaguti algebras. We show that if two linear deformations of a relative Rota-Baxter operator of weight $1$ are equivalent, then their infinitesimals are in the same cohomology class in the first cohomology group. Moreover, we show that an order $n$ deformation of a relative Rota-Baxter operator of weight $1$ can be extended to an order $n+1$ deformation if and only if the obstruction class in the second cohomology group is trivial.
\end{abstract}



\maketitle


\smallskip


\tableofcontents

\allowdisplaybreaks

 \section{Introduction}
 G. Baxter introduced the notion of Rota-Baxter operator on associative algebras when studying fluctuation theory \cite{Ba}. Later, Kupershmidt introduced relative Rota-Baxter operators (also called $\huaO$-operators or Kupershmidt operators) on Lie algebras when studying the classical Yang-Baxter equation \cite{Kupershmidt}. Rota-Baxter operators have many applications in mathematical physics such as integrability systems and quantum mechanics \cite{Bai-Bellier-Guo-Ni}. Since then, Rota-Baxter algebras has become a hot topic in many cases, for example the fist work of Rota-Baxter algebras appeared recently \cite{Gub}. Besides, several works of relative Rota-Baxter operators on Lie algebras and other kinds of algebras were investigated in \cite{TBGS,THS,T.S2,ZQ1}. Moreover, Sheng and the first author introduced the notion of relative Rota-Baxter operators on \LYA s and give its relation with symplectic structures on \LYA s \cite{Sheng Zhao}. Consequently, the first author and the corresponding author examined cohomology and deformations of relative Rota-Baxter operators on \LYA s in \cite{ZQ2}.

 In general, relative Rota-Baxter operators of weight $1$ on Lie algebras corresponds to the solutions to the modified classical Yang-Baxter equation, and plays an important role in generalized Lax pairs and affine structures \cite{BGN}. Bai and his collaborators put relative Rota-Baxter operators of nonzero weights and the modified classical Yang-Baxter equation together by introducing the notion of extended relative Rota-Baxter operators, and gave some applications in Lie bialgebras \cite{BGN}. A relative Rota-Baxter operators of weight $1$ gives rise to a post-Lie algebra, which is a key object in differential geometry. A post-Lie algebra can be seen as a noncommutative pre-Lie algebra. Recently, relative Rota-Baxter operators of nonzero weights on $3$-Lie algebras and $3$-post-Lie algebras were introduced in \cite{HSZ}, and authors explored its cohomology by using the controlling algebras. Moreover, Zhou and his collaborators examined cohomology and homotopy theory of Rota-Baxter algebras of nonzero weights in \cite{WZ}, and cohomology, deformations, and extensions of differential algebras of nonzero weights were studied in \cite{GLSZ}.

\subsection{Lie-Yamaguti algebras}
 A \LYA ~was dated back to Nomizu's work on the affine invariant connections on homogeneous spaces in 1950's \cite{Nomizu}.  Later in 1960's, Yamaguti introduced an algebraic structure and called it a general Lie triple system or a Lie triple algebra \cite{Yamaguti1,Yamaguti2,Yamaguti3}, since it can be seen as a generalization of a Lie algebra and a Lie triple system. Kinyon and Weinstein first called this object a \LYA~  when studying Courant algebroids in the earlier 21st century \cite{Weinstein}. Since a \LYA ~stems from differential geometry completely and it is a key higher structure in mathematical physics, it has attracted much attention and is widely investigated recently. For instance, Benito and his collaborators deeply explored irreducible Lie-Yamaguti algebras and their relations with orthogonal Lie algebras \cite{B.B.M,B.D.E,B.E.M1,B.E.M2}. Deformations and extensions of Lie-Yamaguti algebras were examined in \cite{L.CHEN,Ma Y,Zhang1,Zhang2}. Sheng, the first author, and Zhou analyzed product structures and complex structures on Lie-Yamaguti algebras by means of Nijenhuis operators in \cite{Sheng Zhao}. Takahashi studied modules over quandles using representations of Lie-Yamaguti algebras in \cite{Takahashi}.
\subsection{Deformations}
Mathematical physics has many branches in mathematics and can be applied in Lie theory and representation theory \cite{IK,LiuPeiXia}. Deformation theory plays an important role in mathematics and mathematical physics. A deformation of a mathematical object, roughly speaking, means that it preserves its original structure after a parameter perturbation. In physics, deformation theory comes from quantizing classical mechanics, and this idea promotes some researches on quantum groups in mathematics \cite{CP,Hart}. Recently, deformation quantization has produced many elegant works in the context of mathematical physics. Based on work of complex analysis by Kodaira and Spencer \cite{Kodaira}, deformation theory was generalized in algebra \cite{Hart}. Deformation of algebra can be dated back to works on associative algebra by Gerstenhaber \cite{Gerstenhaber1,Gerstenhaber2,Gerstenhaber3,Gerstenhaber4,Gerstenhaber5}. Later, Nijenhuis and Richadson studied deformations on Lie algebra \cite{Nij1}. Balavoine generalized deformation theory to operads \cite{bala}.

In the context of algebras, deformation has close connection with cohomology. For instance, a suitable cohomology can be used to characterize deformations. In particular, a linear deformation of Lie algebras is controlled by a second cohomolpogy group; an order $n$ deformation can be extended to an order $n+1$ deformation if and only if its obstruction class is trivial; a trivial deformation gives rise to a Nijenhuis operator \cite{Dor}, which plays an important role in deformation theory and has applications in integrability of constructing biHamiltonian systems \cite{Dor}. Sheng and his collaborators have a series of works on deformation theory on (3-)Lie algebras. For example, they studied deformations on 3-Lie algebras and even $n$-Lie algebras \cite{LSZB} and examined product and complex structures on $3$-Lie algebras using Nijenhuis operators \cite{T.S}. Moreover, they construct a controlling algebra that characterizes deformations of relative Rota-Baxter operators on Lie algebras,  on $3$-Lie algebras, and on Leibniz algebras respectively \cite{TBGS,THS,T.S2}. Recently, Pei and his colleagues established crossed homomorphisms on Lie algebras via the same methods, and generalized constructions of many kinds of Lie algebras by using bifunctors \cite{PSTZ}. Besides, the first author and the corresponding author investigated cohomology and linear deformations of $\mathsf{LieYRep}$ pairs and explored several properties of relative Rota-Baxter-Nijenhuis structures on $\mathsf{LieYRep}$ pairs in \cite{ZQ1}, and cohomology and deformations of relative Rota-Baxter operators on \LYA s in \cite{ZQ2}.
\subsection{Motivation}
It is nature to consider relative Rota-Baxter operators of weight $1$ on \LYA s. The first purpose of the present paper is to devote its relations between relative Rota-Baxter operators of weight $1$ and associated algebraic structures. For this purpose, we introduce the notion of post-\LYA s, which can be seen as the underlying algebraic structures of relative Rota-Baxter operators of weight $1$. And we show that a post-\LYA ~gives rise to a \LYA ~structure and an action of this \LYA ~on itself. Consequently, the identity map is a relative Rota-Baxter operator of weight $1$ on this induced \LYA.

The second purpose is to explore cohomology and deformations of relative Rota-Baxter operators of weight $1$. For this aim, we need to construct a new representation of the induced sub-adjacent \LYA ~by relative Rota-Baxter operators of weight $1$, and to construct an $0$-cochain, which is one of the main tasks in this paper. Once the cohomology is established, we are able to examine deformations of relative Rota-Baxter operators of weight $1$. We intend to study two kinds of deformations: linear and higher order deformations.
\subsection{Outline of the paper}
The paper is structured as follows. In Section 2, we recall some basic notions such as \LYA s, representations, and cohomology. In Section 3, we introduce the notion of relative Rota-Baxter operators of weight $1$ on \LYA s, and study some properties of it. In Section 4, we introduce the notion of post-\LYA s, which is the underlying algebraic structures of relative Rota-Baxter operators of weight $1$. We give the relationship between post-\LYA s and \LYA s. In Section 5, we establish the cohomology of relative Rota-Baxter operators of weight $1$ on \LYA s, and examine a functorial property of the cohomology theory. Moreover, we use this type of cohomology to explore two kinds of deformations, and show that the infinitesimal of linear deformations can be governed by cohomology and that the extension of a higher deformation is characterized by a special cohomology class, which is called the obstruction class.

In this paper, all vector spaces are assumed to be over a field $\mathbb{K}$ of characteristic $0$ and finite-dimensional.

\section{Preliminaries: Lie-Yamaguti algebras, representations and cohomology}
In this section, we recall some basic notions such as \LYA s, representations and their cohomology theories.
The notion of  Lie-Yamaguti algebras was introduced by  Yamaguti in \cite{Yamaguti1}.

\begin{defi}\cite{Weinstein}\label{LY}
A {\bf \LYA} is a vector space $\g$ equipped with a bilinear bracket $[\cdot,\cdot]:\wedge^2  \mathfrak{g} \to \mathfrak{g} $ and a trilinear bracket $\Courant{\cdot,\cdot,\cdot}:\wedge^2\g \otimes  \mathfrak{g} \to \mathfrak{g} $, which meet the following conditions: for all $x,y,z,w,t \in \g$,
\begin{eqnarray}
~ &&\label{LY1}[[x,y],z]+[[y,z],x]+[[z,x],y]+\Courant{x,y,z}+\Courant{y,z,x}+\Courant{z,x,y}=0,\\
~ &&\label{LY2}\Courant{[x,y],z,w}+\Courant{[y,z],x,w}+\Courant{[z,x],y,w}=0,\\
~ &&\label{LY3}\Courant{x,y,[z,w]}=[\Courant{x,y,z},w]+[z,\Courant{x,y,w}],\\
~ &&\Courant{x,y,\Courant{z,w,t}}=\Courant{\Courant{x,y,z},w,t}+\Courant{z,\Courant{x,y,w},t}+\Courant{z,w,\Courant{x,y,t}}.\label{fundamental}
\end{eqnarray}
In the sequel, we denote a \LYA ~by $(\g,\br,\ltp))$.
\end{defi}

\begin{ex}
Let $(\g,\br))$ be a Lie algebra. Define a trilinear bracket
$$\ltp::\wedge^2\g\ot \g\to \g$$
by
$$\Courant{x,y,z}:=[[x,y],z],\quad \forall x,y,z \in \g.$$
Then by a direct computation, we know that $(\g,\br,\ltp))$ forms a \LYA.
\end{ex}

The following example is even more interesting.
\begin{ex}
Let $M$ be a closed manifold with an affine connection, and denote by $\frkX(M)$ the set of vector fields on $M$. For all $x, y, z \in \frkX(M) $, set
\begin{eqnarray*}
[x,y]&:=&-T(x,y),\\
\Courant{x,y,z}&:=&-R(x,y)z,
\end{eqnarray*}
where $T$ and $R$ are torsion tensor and curvature tensor respectively. It turns out that the triple
$ (\frkX(M),[\cdot,\cdot],\Courant{\cdot,\cdot,\cdot})$ forms a \LYA. See \cite{Nomizu} for more details.
\end{ex}
\emptycomment{
\begin{rmk}
Given a Lie-Yamaguti algebra $(\m,[\cdot,\cdot]_\m,\Courant{\cdot,\cdot,\cdot}_\m)$ and any two elements $x,y \in \m$, the linear map $D(x,y):\m \to \m,~z\mapsto D(x,y)z=\Courant{x,y,z}_\m$ is an (inner) derivation. Moreover, let $D(\m,\m)$ be the linear span of the inner derivations. Consider the vector space $\g(\m)=D(\m,\m)\oplus \m$, and endow it with a Lie bracket as follows: for all $x,y,z,t \in \m$
\begin{eqnarray*}
[D(x,y),D(z,t)]_{\g(\m)}&=&D(\Courant{x,y,z}_\m,t)+D(z,\Courant{x,y,t}_\m),\\
~[D(x,y),z]_{\g(\m)}&=&D(x,y)z=\Courant{x,y,z}_\m,\\
~[z,t]_{\g(\m)}&=&D(z,t)+[z,t]_\m.
\end{eqnarray*}
Then $(\g(\m),[\cdot,\cdot]_{\g(\m)})$ becomes a Lie algebra.
\end{rmk}}

Next, we recall the notion of representations of \LYA s.

\begin{defi}\cite{Yamaguti2}\label{defi:representation}
Let $(\g,[\cdot,\cdot],\Courant{\cdot,\cdot,\cdot})$ be a Lie-Yamaguti algebra. A {\bf representation} of $\g$ is a vector space $V$ equipped with a linear map $\rho:\g \to \gl(V)$ and a bilinear map $\mu:\otimes^2 \g \to \gl(V)$, which meet the following conditions: for all $x,y,z,w \in \g$,
\begin{eqnarray}
~&&\label{RLYb}\mu([x,y],z)-\mu(x,z)\rho(y)+\mu(y,z)\rho(x)=0,\\
~&&\label{RLYd}\mu(x,[y,z])-\rho(y)\mu(x,z)+\rho(z)\mu(x,y)=0,\\
~&&\label{RLYe}\rho(\Courant{x,y,z})=[D_{\rho,\mu}(x,y),\rho(z)],\\
~&&\label{RYT4}\mu(z,w)\mu(x,y)-\mu(y,w)\mu(x,z)-\mu(x,\Courant{y,z,w})+D_{\rho,\mu}(y,z)\mu(x,w)=0,\\
~&&\label{RLY5}\mu(\Courant{x,y,z},w)+\mu(z,\Courant{x,y,w})=[D_{\rho,\mu}(x,y),\mu(z,w)],
\end{eqnarray}
where the bilinear map $D_{\rho,\mu}:\otimes^2\g \to \gl(V)$ is given by
\begin{eqnarray}
 D_{\rho,\mu}(x,y):=\mu(y,x)-\mu(x,y)+[\rho(x),\rho(y)]-\rho([x,y]), \quad \forall x,y \in \g.\label{rep}
 \end{eqnarray}
It is obvious that $D_{\rho,\mu}$ is skew-symmetric, and we write $D$ in the sequel without ambiguities.  We denote a representation of $\g$ by $(V;\rho,\mu)$.
\end{defi}

\begin{rmk}\label{rmk:rep}
Let $(\g,[\cdot,\cdot],\Courant{\cdot,\cdot,\cdot})$ be a Lie-Yamaguti algebra and $(V;\rho,\mu)$ a representation of $\g$. If $\rho=0$ and the Lie-Yamaguti algebra $\g$ reduces to a Lie tripe system $(\g,\Courant{\cdot,\cdot,\cdot})$,  then the representation reduces to that of the Lie triple system $(\g,\Courant{\cdot,\cdot,\cdot})$: $(V;\mu)$. If $\mu=0$, $D=0$ and the Lie-Yamaguti algebra $\g$ reduces to a Lie algebra $(\g,[\cdot,\cdot])$, then the representation reduces to that of the Lie algebra $(\g,[\cdot,\cdot])$: $(V;\rho)$. Hence a representation of a Lie-Yamaguti algebra is a natural generalization of that of a Lie algebra or of a Lie triple system.
\end{rmk}

By a direct computation, we have the following lemma.
\begin{lem}
Suppose that $(V;\rho,\mu)$ is a representation of a Lie-Yamaguti algebra $(\g,[\cdot,\cdot],\Courant{\cdot,\cdot,\cdot})$. Then the following equalities are satisfied:
\begin{eqnarray*}
\label{RLYc}&&D([x,y],z)+D([y,z],x)+D([z,x],y)=0;\\
\label{RLY5a}&&D(\Courant{x,y,z},w)+D(z,\Courant{x,y,w})=[D(x,y),D_{\rho,\mu}(z,w)];\\
~ &&\mu(\Courant{x,y,z},w)=\mu(x,w)\mu(z,y)-\mu(y,w)\mu(z,x)-\mu(z,w)D(x,y).\label{RLY6}
\end{eqnarray*}
\end{lem}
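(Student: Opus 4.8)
The plan is to establish all three identities by direct computation, in each case expanding $D=D_{\rho,\mu}$ through its defining formula \eqref{rep}, reducing everything to expressions in $\rho$ and $\mu$ alone, and then applying the representation axioms \eqref{RLYb}--\eqref{RLY5}, the Lie-Yamaguti axioms \eqref{LY1}--\eqref{fundamental}, and the Jacobi identity of $\gl(V)$.

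For the first identity, I would substitute \eqref{rep} into the cyclic sum $\sum_{\mathrm{cyc}} D([x,y],z)$ and sort the terms by type. The contributions $\mu(z,[x,y])$ and $\mu([x,y],z)$ are rewritten by \eqref{RLYd} and \eqref{RLYb}, respectively, producing expressions of the form $\rho\mu$ and $\mu\rho$. The contributions $\rho([[x,y],z])$ are first collapsed by \eqref{LY1}, which replaces the cyclic sum of iterated brackets by a cyclic sum of triple brackets, and then rewritten by \eqref{RLYe} as $\sum_{\mathrm{cyc}}[D(x,y),\rho(z)]$. Expanding this last $D(x,y)$ once more via \eqref{rep}, the pure commutators $[[\rho(x),\rho(y)],\rho(z)]$ sum to zero by Jacobi, the terms $[\rho([x,y]),\rho(z)]$ cancel against the original ones, and the remaining $\mu$-commutators $[\mu(y,x)-\mu(x,y),\rho(z)]$ exactly reproduce and cancel the $\rho\mu$ and $\mu\rho$ blocks once the cyclic sums are written out.

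For the third identity there is a shortcut that avoids expanding $D$. Relabel the variables in \eqref{RYT4} by $(x,y,z,w)\mapsto(z,x,y,w)$ to obtain $\mu(z,\Courant{x,y,w})=\mu(y,w)\mu(z,x)-\mu(x,w)\mu(z,y)+D(x,y)\mu(z,w)$, and substitute this into the rearrangement $\mu(\Courant{x,y,z},w)=[D(x,y),\mu(z,w)]-\mu(z,\Courant{x,y,w})$ of \eqref{RLY5}. The two occurrences of $D(x,y)\mu(z,w)$ cancel, leaving precisely the stated right-hand side.

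For the second identity, the cleanest route is to expand only the right-hand side $[D(x,y),D(z,w)]$, splitting $D(z,w)$ via \eqref{rep} into its three pieces $\mu(w,z)-\mu(z,w)$, $[\rho(z),\rho(w)]$, and $-\rho([z,w])$, and matching each against the left-hand side: the $\mu$-piece is handled by two applications of \eqref{RLY5}; the $[\rho,\rho]$-piece by \eqref{RLYe} together with Jacobi in $\gl(V)$, yielding $[\rho(\Courant{x,y,z}),\rho(w)]+[\rho(z),\rho(\Courant{x,y,w})]$; and the $\rho([z,w])$-piece by \eqref{RLYe} followed by the Leibniz-type axiom \eqref{LY3}. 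I expect the only real difficulty throughout to be organizational bookkeeping---tracking signs and which slot each argument occupies across the cyclic and relabeled sums---since, once the terms are grouped as above, each group is settled by a single application of one axiom with no residual terms.
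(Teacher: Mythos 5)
Your proposal is correct, and it follows exactly the ``direct computation'' route that the paper itself invokes (the paper states the lemma with no written proof beyond that phrase); I checked each of your three reductions --- the cyclic cancellation via \eqref{LY1}, \eqref{RLYb}, \eqref{RLYd}, \eqref{RLYe} and the Jacobi identity for the first identity, the relabelling of \eqref{RYT4} combined with \eqref{RLY5} for the third, and the term-by-term expansion of $[D(x,y),D(z,w)]$ against \eqref{RLY5}, \eqref{RLYe}, \eqref{LY3} and Jacobi for the second --- and they all close with no residual terms. In effect your writeup supplies the details the paper omits.
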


\begin{ex}\label{ad}
Let $(\g,[\cdot,\cdot],\Courant{\cdot,\cdot,\cdot})$ be a Lie-Yamaguti algebra. We define linear maps $\ad:\g \to \gl(\g)$ and $\frkR :\otimes^2\g \to \gl(\g)$ by $x \mapsto \ad_x$ and $(x,y) \mapsto \mathfrak{R}_{x,y}$ respectively, where $\ad_xz=[x,z]$ and $\mathfrak{R}_{x,y}z=\Courant{z,x,y}$ for all $z \in \g$. Then $(\ad,\mathfrak{R})$ forms a representation of $\g$ on itself, where $\frkL:= D_{\ad,\frkR}$ is given by
\begin{eqnarray*}
\frkL_{x,y}=\mathfrak{R}_{y,x}-\mathfrak{R}_{x,y}+[\ad_x,\ad_y]-\ad_{[x,y]}, \quad \forall x,y \in \g.
\end{eqnarray*}
By \eqref{LY1}, we have
\begin{eqnarray*}
\frkL_{x,y}z=\Courant{x,y,z}, \quad \forall z \in \g.\label{lef}
\end{eqnarray*}
In this case, $(\g;\ad,\frkR)$ is called the {\bf adjoint representation} of $\g$.
\end{ex}
\emptycomment{
The representations of Lie-Yamaguti algebras can be characterized by the semidirect Lie-Yamaguti algebras. This fact is revealed via the following proposition.

\begin{pro}\cite{Zhang1}
Let $(\g,[\cdot,\cdot],\Courant{\cdot,\cdot,\cdot})$ be a Lie-Yamaguti algebra and $V$ a vector space. Let $\rho:\g \to \gl(V)$ and $\mu:\otimes^2 \g \to \gl(V)$ be linear maps. Then $(V;\rho,\mu)$ is a representation of $(\g,[\cdot,\cdot],\Courant{\cdot,\cdot,\cdot})$ if and only if there is a Lie-Yamaguti algebra structure $([\cdot,\cdot]_{\rho,\mu},\Courant{\cdot,\cdot,\cdot}_{\rho,\mu})$ on the direct sum $\g \oplus V$ which is defined by for all $x,y,z \in \g, ~u,v,w \in V$,
\begin{eqnarray}
\label{semi1}[x+u,y+v]_{\rho,\mu}&=&[x,y]+\rho(x)v-\rho(y)u,\\
\label{semi2}~\Courant{x+u,y+v,z+w}_{\rho,\mu}&=&\Courant{x,y,z}+D_{\rho,\mu}(x,y)w+\mu(y,z)u-\mu(x,z)v,
\end{eqnarray}
where $D_{\rho,\mu}$ is given by \eqref{rep}.
This Lie-Yamaguti algebra $(\g \oplus V,[\cdot,\cdot]_{\rho,\mu},\Courant{\cdot,\cdot,\cdot}_{\rho,\mu})$ is called the {\bf semidirect product Lie-Yamaguti algebra}, and is denoted by $\g \ltimes_{\rho,\mu} V$.
\end{pro}
\begin{proof}
The proof is a direct computation, so we omit the details. Or one can see \cite{Zhang1} for more details.
\end{proof}}

Let us recall the cohomology theory on Lie-Yamaguti algebras given in \cite{Yamaguti2}. Let $(\g,[\cdot,\cdot],\Courant{\cdot,\cdot,\cdot})$ be a  Lie-Yamaguti algebra and $(V;\rho,\mu)$ a representation of $\g$. We denote the set of $p$-cochains by $C^p_{\rm LieY}(\g,V)~(p \geqslant 1)$, where
\begin{eqnarray*}
C^{n+1}_{\rm LieY}(\g,V)\triangleq
\begin{cases}
\Hom(\underbrace{\wedge^2\g\otimes \cdots \otimes \wedge^2\g}_n,V)\times \Hom(\underbrace{\wedge^2\g\otimes\cdots\otimes\wedge^2\g}_{n}\otimes\g,V), & \forall n\geqslant 1,\\
\Hom(\g,V), &n=0.
\end{cases}
\end{eqnarray*}

In the sequel, we recall the coboundary map of $p$-cochains:
\begin{itemize}
\item If $n\geqslant 1$, for any $(f,g)\in C^{n+1}_{\rm LieY}(\g,V)$, the coboundary map is given by
$$\delta=(\delta_{\rm I},\delta_{\rm II}):C^{n+1}_{\rm LieY}(\g,V)\to C^{n+2}_{\rm LieY}(\g,V),$$
$$\qquad \qquad\qquad \qquad\qquad \quad (f,g)\mapsto(\delta_{\rm I}(f,g),\delta_{\rm II}(f,g))$$
where $\delta_{\rm I}(f,g)$ and $\delta_{\rm II}(f,g)$ are defined to be
\begin{eqnarray}
~\nonumber &&\Big(\delta_{\rm I}(f,g)\Big)(\frkX_1,\cdots,\frkX_{n+1})\\
~ &=&(-1)^n\Big(\rho(x_{n+1})g(\frkX_1,\cdots,\frkX_n,y_{n+1})-\rho(y_{n+1})g(\frkX_1,\cdots,\frkX_n,x_{n+1})\label{cohomology1}\\
~\nonumber &&\quad\quad-g(\frkX_1,\cdots,\frkX_n,[x_{n+1},y_{n+1}])\Big)\\
~\nonumber &&+\sum_{k=1}^{n}(-1)^{k+1}D(\frkX_k)f(\frkX_1,\cdots,\widehat{\frkX_k},\cdots,\frkX_{n+1})\\
~\nonumber &&+\sum_{1\leqslant k<l\leqslant n+1}(-1)^{k}f(\frkX_1,\cdots,\widehat{\frkX_k},\cdots,\frkX_k\circ\frkX_l,\cdots,\frkX_{n+1}),
\end{eqnarray}
and
\begin{eqnarray}
~\nonumber &&\Big(\delta_{\rm II}(f,g)\Big)(\frkX_1,\cdots,\frkX_{n+1},z)\\
~ &=&(-1)^n\Big(\mu(y_{n+1},z)g(\frkX_1,\cdots,\frkX_n,x_{n+1})-\mu(x_{n+1},z)g(\frkX_1,\cdots,\frkX_n,y_{n+1})\Big)\label{cohomology2}\\
~\nonumber &&+\sum_{k=1}^{n+1}(-1)^{k+1}D(\frkX_k)g(\frkX_1,\cdots,\widehat{\frkX_k},\cdots,\frkX_{n+1},z)\\
~\nonumber &&+\sum_{1\leqslant k<l\leqslant n+1}(-1)^kg(\frkX_1,\cdots,\widehat{\frkX_k},\cdots,\frkX_k\circ\frkX_l,\cdots,\frkX_{n+1},z)\\
~\nonumber &&+\sum_{k=1}^{n+1}(-1)^kg(\frkX_1,\cdots,\widehat{\frkX_k},\cdots,\frkX_{n+1},\Courant{x_k,y_k,z}),
\end{eqnarray}
respectively. Here $\frkX_i=x_i\wedge y_i\in\wedge^2\g~(i=1,\cdots,n+1),~z\in \g$ and $\frkX_k\circ\frkX_l:=\Courant{x_k,y_k,x_l}\wedge y_l+x_l\wedge\Courant{x_k,y_k,y_l}$.

\item If $n=0$, for any $f \in C^1_{\rm LieY}(\g,V)$, the coboundary map is given by
$$\delta:C^1_{\rm LieY}(\g,V)\to C^2_{\rm LieY}(\g,V),$$
$$\qquad \qquad \qquad f\mapsto (\delta_{\rm I}(f),\delta_{\rm II}(f))$$
where
\begin{eqnarray}
\label{cohomology3}\Big(\delta_{\rm I}(f)\Big)(x,y)&=&\rho(x)f(y)-\rho(y)f(x)-f([x,y]),\\
~ \label{cohomology4}\Big(\delta_{\rm II}(f)\Big)(x,y,z)&=&D(x,y)f(z)+\mu(y,z)f(x)-\mu(x,z)f(y)-f(\Courant{x,y,z}),\quad \forall x,y, z\in \g.
\end{eqnarray}
\end{itemize}

Yamaguti showed the following basic fact.

\begin{pro}{\rm \cite{Yamaguti2}}
 With the notations above, for any $f\in C^1_{\rm LieY}(\g,V)$, we have
 \begin{eqnarray*}
 \delta_{\rm I}\Big(\delta_{\rm I}(f)),\delta_{\rm II}(f)\Big)=0\quad {\rm and} \quad\delta_{\rm II}\Big(\delta_{\rm I}(f)),\delta_{\rm II}(f)\Big)=0.
 \end{eqnarray*}
 Moreover, for all $(f,g)\in C^p_{\rm LieY}(\g,V),~(p\geqslant 2)$, we have
  \begin{eqnarray*}
  \delta_{\rm I}\Big(\delta_{\rm I}(f,g)),\delta_{\rm II}(f,g)\Big)=0\quad{\rm and} \quad \delta_{\rm II}\Big(\delta_{\rm I}(f,g)),\delta_{\rm II}(f,g)\Big) =0.
  \end{eqnarray*}
  Thus the cochain complex $(C^\bullet_{\rm LieY}(\g,V)=\bigoplus\limits_{p=1}^\infty C^p_{\rm LieY}(\g,V),\delta)$ is well defined. For convenience, we call this cohomology the {\bf Yamaguti cohomology} in this paper.
  \end{pro}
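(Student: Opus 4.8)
The plan is to prove the statement $\delta^2=0$ by a direct verification, organizing the computation so that the cancellations are transparent rather than accidental. Since the coboundary formulas for $1$-cochains, namely \eqref{cohomology3}--\eqref{cohomology4}, have a different shape from those for $p$-cochains with $p\geqslant 2$, namely \eqref{cohomology1}--\eqref{cohomology2}, I would treat the base case $n=0$ first and then the general case $n\geqslant 1$ separately. In each case there are two identities to check, $\delta_{\rm I}\circ\delta=0$ and $\delta_{\rm II}\circ\delta=0$, so altogether four computations. Throughout, the only inputs beyond the defining formulas are the structure relations \eqref{LY1}--\eqref{fundamental} of the \LYA, the representation axioms \eqref{RLYb}--\eqref{RLY5}, the definition \eqref{rep} of $D$, and the three derived identities of the preceding Lemma.

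For the base case I would set $F:=\delta_{\rm I}(f)$ and $G:=\delta_{\rm II}(f)$, substitute into the $n=1$ instances of \eqref{cohomology1}--\eqref{cohomology2}, and expand. The expression for $\delta_{\rm I}(F,G)(\frkX_1,\frkX_2)$ splits into terms of three kinds: those built purely from $\rho$ and the binary bracket $[\cdot,\cdot]$, which cancel exactly as in the Chevalley--Eilenberg complex of the Lie algebra $(\g,[\cdot,\cdot])$; mixed terms pairing $\rho$ against $\mu$ or $D$; and terms in which the operation $\frkX_k\circ\frkX_l$ produces a triple bracket $\Courant{\cdot,\cdot,\cdot}$. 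I would group these and eliminate them using \eqref{RLYb} and \eqref{RLYd} for the $\mu$--$\rho$ cross terms, \eqref{RLYe} to convert $\rho(\Courant{\cdot,\cdot,\cdot})$ into a commutator with $D$, and then \eqref{rep} together with \eqref{LY1} to reconcile the remaining $D$-contributions. The companion expansion of $\delta_{\rm II}(F,G)$ is longer and relies on \eqref{RYT4}, \eqref{RLY5}, and in particular the last derived identity of the Lemma, which rewrites $\mu(\Courant{\cdot,\cdot,\cdot},\cdot)$ as a combination of $\mu\mu$ and $\mu D$ terms.

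For the general case $n\geqslant 1$ I would carry out the same expansion for an arbitrary $(f,g)\in C^{n+1}_{\rm LieY}(\g,V)$, now keeping the summation indices $k<l$ explicit and tracking the global sign $(-1)^n$. The verification of both $\delta_{\rm I}\circ\delta=0$ and $\delta_{\rm II}\circ\delta=0$ then proceeds by sorting the double sums into: (i) an antisymmetrization part coming from two applications of the bracket action, which vanishes by the Jacobi identity encoded in \eqref{LY1}; (ii) a part where the two $\circ$-substitutions act on disjoint slots, cancelling in antisymmetric pairs; and (iii) a part where the $\circ$-operations act on overlapping slots, where the fundamental identity \eqref{fundamental} (equivalently, the derivation property of $\Courant{x,y,\cdot}$) forces cancellation. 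The compatibility relations \eqref{LY2}--\eqref{LY3} are precisely what make $\frkX_k\circ\frkX_l$ interact correctly with both brackets, and the operator axioms play the same role as in the base case on the coefficient side.

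The main obstacle is purely combinatorial bookkeeping: the number of terms is large, each carries a sign depending on $(-1)^n$ and on the positions $k<l$, and a single expression must frequently be recognized through two axioms at once. I expect the genuinely delicate point to be type (iii) in $\delta_{\rm II}\circ\delta$, where \eqref{fundamental} and \eqref{RYT4} must be applied in tandem and a single sign slip propagates through the entire cancellation. A cleaner, more conceptual alternative would be to realize the Yamaguti complex inside the Chevalley--Eilenberg complex of the associated enveloping Lie algebra $\g(\g)=D(\g,\g)\oplus\g$ and deduce $\delta^2=0$ from $d_{\rm CE}^2=0$; this trades the brute-force bookkeeping for the work of setting up the embedding and matching the two differentials, at the cost of having to justify that correspondence carefully.
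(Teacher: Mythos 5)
The paper does not prove this proposition at all: it is quoted verbatim from Yamaguti's original work \cite{Yamaguti2}, so there is no in-text argument to compare against. Your plan is the same route as the cited source, namely a direct expansion of $\delta\circ\delta$ using \eqref{cohomology1}--\eqref{cohomology4} and cancellation via the algebra axioms \eqref{LY1}--\eqref{fundamental} and the representation axioms \eqref{RLYb}--\eqref{RLY5}, and your sorting of the terms (pure $\rho$-bracket terms, $\rho$--$\mu$ cross terms, and terms generated by $\frkX_k\circ\frkX_l$ hitting the ternary bracket) together with the assignment of which axiom kills which block is the correct organization of that computation.

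The one caveat is that what you have written is a roadmap, not the verification itself: none of the four expansions is actually carried out, and for this statement the entire mathematical content lives precisely in the cancellations you defer --- in particular the overlapping-slot case of $\delta_{\rm II}\circ\delta$, where \eqref{fundamental}, \eqref{RYT4}, \eqref{RLY5} and the derived identity for $\mu(\Courant{x,y,z},w)$ must all be invoked with consistent signs. To turn this into a proof you would need to execute at least the $n=0$ and a representative $n\geqslant 1$ computation in full. Your proposed conceptual shortcut via the enveloping Lie algebra $D(\g,\g)\oplus\g$ is more delicate than it may appear, since the Yamaguti complex does not sit inside the Chevalley--Eilenberg complex of that Lie algebra in a degree-preserving way; rightly, you flag that the matching of differentials would itself require proof.
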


\begin{defi}
With the above notations, let $(f,g)$ in $C^p_{\rm LieY}(\g,V))$ (resp. $f\in C^1_{\rm LieY}(\g,V)$ for $p=1$) be a $p$-cochain. If it satisfies $\delta(f,g)=0$ (resp. $\delta(f)=0$), then it is called a $p$-cocycle. If there exists $(h,s)\in C^{p-1}_{\rm LieY}(\g,V)$,~(resp. $t\in C^1(\g,V)$, if $p=2$) such that $(f,g)=\delta(h,s)$~(resp. $(f,g)=\delta(t)$), then it is called a $p$-coboundary ($p\geqslant 2$). The set of $p$-cocycles and that of $p$-coboundaries are denoted by $Z^p_{\rm LieY}(\g,V)$ and $B^p_{\rm LieY}(\g,V)$ respectively. The resulting $p$-cohomology group is defined to be the factor space
$$H^p_{\rm LieY}(\g,V)=Z^p_{\rm LieY}(\g,V)/B^p_{\rm LieY}(\g,V)\quad (p\geqslant2).$$
\end{defi}

\section{Relative Rota-Baxter operators of nonzero weights on Lie-Yamaguti algebras}
In this subsection, we introduce the notion of relative Rota-Baxter operators of weight $1$ on \LYA s.
Before this, we recall the notion of center of \LYA s, and introduce that of derived algebras of \LYA s.

Let $(\g,[\cdot,\cdot],\Courant{\cdot,\cdot,\cdot})$ be a \LYA. Recall in \cite{ZQX} that the {\bf center} of $\g$ is denoted by
$$C(\g):=\{x\in \g|[x,y]=0,\forall y\in \g\}\cap\Big(\{x\in \g|\Courant{x,y,z}=0,\forall y,z \in \g\}\cup\{x\in \g|\Courant{y,z,x}=0,\forall y,z \in \g\}\Big).$$
The subalgebra $[\g,\g]\cap\Courant{\g,\g,\g}$ of $\g$ is called the {\bf derived algebra} of $\g$, which is denoted by $\g^1$.

The following definition is also taken from \cite{ZQX}.
\begin{defi}
Let $(\g,[\cdot,\cdot]_\g,\Courant{\cdot,\cdot}_\g)$ and $(\h,[\cdot,\cdot]_\h,\Courant{\cdot,\cdot}_\h)$ be two Lie-Yamaguti algebras. Let $(\h;\rho,\mu)$ be a representation of $\g$ on the vector space $\h$, i.e., linear maps $\rho:\g\to\gl(\h)$, $\mu:\otimes^2\g\to\gl(\h)$, and $D:\wedge^2\g\to\gl(\g)$ are given by Eqs. \eqref{RLYb}-\eqref{rep}. If for all $x,y\in \g,~u,v,w\in \h$, the following conditions are satisfied
\begin{eqnarray}
\rho(x)u,\mu(x,y)u\in C(\h),\label{*1}\\
\rho(x)[u,v]_\h=\mu(x,y)[u,v]_\h=0,\label{*2}\\
\rho(x)\Courant{u,v,w}_\h=\mu(x,y)\Courant{u,v,w}_\h=0.\label{*3}
\end{eqnarray}
then we say that $(\rho,\mu)$ is an {\bf action} of $\g$ on $\h$.
\end{defi}

Let $(\rho,\mu)$ be an action of $\g$ on $\h$. By \eqref{rep}, we deduce that
$$D(x,y)u\in C(\g),~D(x,y)[u,v]_\h=D(x,y)\Courant{u,v,w}_\h=0,\quad\forall x,y\in \g,u,v,w\in \h.$$

\emptycomment{The following proposition shows that an action of \LYA s can be used to characterize semidirect product \LYA s.
\begin{pro}
Let $(\g,[\cdot,\cdot]_\g,\Courant{\cdot,\cdot}_\g)$ and $(\h,[\cdot,\cdot]_\h,\Courant{\cdot,\cdot}_\h)$ be two Lie-Yamaguti algebras. Let $(\rho,\mu)$ be an action of $\g$ on $\h$, then there is a Lie-Yamaguti algebra structure on the direct sum $\g\oplus\h$ defined by
\begin{eqnarray*}
[x+u,y+v]_{\rho,\mu}&=&[x,y]_\g+\rho(x)v-\rho(y)u+[u,v]_\h,\\
\Courant{x+u,y+v,z+w}_{\rho,\mu}&=&\Courant{x,y,z}_\g+D(x,y)w+\mu(y,z)u-\mu(x,z)v+\Courant{u,v,w}_\h,
\end{eqnarray*}
for all $x,y,z\in \g$ and $u,v,w\in \h$. This Lie-Yamaguti algebra is called the {\bf semidirect product Lie-Yamaguti algebra} with respect to the action $(\rho,\mu)$, and is denoted by $\g\ltimes_{\rho,\mu}\h$.
\end{pro}
\begin{proof}
It is a direct computation, and we omit the details.
\end{proof}}

The following definition is standard.

\begin{defi}\cite{Sheng Zhao,Takahashi}\label{homomorphism}
Suppose that $(\g,[\cdot,\cdot]_{\g},\Courant{\cdot,\cdot,\cdot}_{\g})$ and $(\h,[\cdot,\cdot]_{\h},\Courant{\cdot,\cdot,\cdot}_{\h})$ are two Lie-Yamaguti algebras. A {\bf homomorphism} from $(\g,[\cdot,\cdot]_{\g},\Courant{\cdot,\cdot,\cdot}_{\g})$ to $(\h,[\cdot,\cdot]_{\h},\Courant{\cdot,\cdot,\cdot}_{\h})$ is a linear map $\phi:\g \to \h$ that preserves the \LYA ~structures, that is, for all $x,y,z \in \g$,
\begin{eqnarray}
\phi([x,y]_{\g})&=&[\phi(x),\phi(y)]_{\h},\label{v1}\\
~ \phi(\Courant{x,y,z}_{\g})&=&\Courant{\phi(x),\phi(y),\phi(z)}_{\h}.\label{v2}
\end{eqnarray}
If, moreover, $\phi$ is a bijection, it is then called an {\bf isomorphism}.
\end{defi}

Now we are ready to introduce the notion of relative Rota-Baxter operators of nonzero weights on \LYA s.

\begin{defi}
Let $(\g,[\cdot,\cdot]_\g,\Courant{\cdot,\cdot}_\g)$ and $(\h,[\cdot,\cdot]_\h,\Courant{\cdot,\cdot}_\h)$ be two Lie-Yamaguti algebras. Let $(\rho,\mu)$ be an action of $\g$ on $\h$. A linear map $T:\h\longrightarrow\g$ is called a {\bf relative Rota-Baxter operator of weight $1$} from $(\h,[\cdot,\cdot]_\h,\Courant{\cdot,\cdot}_\h)$ to $(\g,[\cdot,\cdot]_\g,\Courant{\cdot,\cdot}_\g)$ with respect to the action $(\rho,\mu)$, if for all $u,v,w\in \h$,
\begin{eqnarray}
[Tu,Tv]_\g&=&T\Big(\rho(Tu)v-\rho(Tv)u+[u,v]_\h\Big),\label{chomo1}\\
\ \ \Courant{Tu,Tv,Tw}_\g&=&T\Big(D(Tu,Tv)w+\mu(Tv,Tw)u-\mu(Tu,Tw)v+\Courant{u,v,w}_\h\Big).\label{chomo2}
\end{eqnarray}
\end{defi}

Then we introduce the notion of homomorphisms of relative Rota-Baxter operators of nonzero weights.

\begin{defi}
Let $T$ and $T'$ be two relative Rota-Baxter operators of weight $1$ from a Lie-Yamaguti algebra $(\h,[\cdot,\cdot]_\h,\Courant{\cdot,\cdot,\cdot}_\h)$ to another Lie-Yamaguti algebra $(\g,[\cdot,\cdot]_\g,\Courant{\cdot,\cdot,\cdot}_\g)$ with respect to an action $(\rho,\mu)$. A {\bf homomorphism} from $T'$ to $T$ is a pair $(\psi_\g,\psi_\h)$, where $\psi_\g:\g\longrightarrow\g$ and $\psi_\h:\h\longrightarrow\h$ are two Lie-Yamaguti algebra homomorphisms such that
\begin{eqnarray}
\psi_\g\circ T'&=&T\circ \psi_\h,\label{crho1}\\
\psi_\h\Big(\rho(x)u\Big)&=&\rho\Big(\psi_\g(x)\Big)\psi_\h(u),\label{crosshomo1}\\
\psi_\h\Big(\mu(x,y)u\Big)&=&\mu\Big(\psi_\g(x),\psi_\g(y)\Big)\psi_\h(u),\quad\forall x,y\in \g, u\in \h.\label{crosshomo2}
\end{eqnarray}
In particular, if both $\psi_\g$ and $\psi_\h$ are invertible, then a homomorphism $(\psi_\g,\psi_h)$ becomes an {\bf isomorphism} from $T'$ to $T$.
\end{defi}

By Eqs. \eqref{crosshomo1} and \eqref{crosshomo2}, and a direct computation, we have the following proposition.

\begin{pro}
Let $T$ and $T'$ be two relative Rota-Baxter operators of weight $1$ from a Lie-Yamaguti algebra $(\h,[\cdot,\cdot]_\h,\Courant{\cdot,\cdot,\cdot}_\h)$ to another Lie-Yamaguti algebra $(\g,[\cdot,\cdot]_\g,\Courant{\cdot,\cdot,\cdot}_\g)$ with respect to an action $(\rho,\mu)$. Suppose that $(\psi_\g,\psi_\h)$ is a homomorphism from $T'$ to $T$, then we have
\begin{eqnarray}
\psi_\h\Big(D(x,y)u\Big)&=&D\Big(\psi_\g(x),\psi_\g(y)\Big)\psi_\h(u),\quad\forall x,y\in \g, u\in \h.\label{crosshomo3}
\end{eqnarray}
\end{pro}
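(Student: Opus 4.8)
The plan is to expand the left-hand side using the explicit formula \eqref{rep} for $D$ and then push $\psi_\h$ through each summand with the intertwining relations \eqref{crosshomo1} and \eqref{crosshomo2}, recognizing that the resulting expression is precisely $D(\psi_\g(x),\psi_\g(y))\psi_\h(u)$ read off from \eqref{rep} again. Concretely, by definition $D(x,y)u = \mu(y,x)u - \mu(x,y)u + \rho(x)\rho(y)u - \rho(y)\rho(x)u - \rho([x,y]_\g)u$, so it suffices to handle these five types of terms one at a time and to recombine them.

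First I would treat the two $\mu$-terms: a single application of \eqref{crosshomo2} gives $\psi_\h(\mu(y,x)u) = \mu(\psi_\g(y),\psi_\g(x))\psi_\h(u)$, and likewise $\psi_\h(\mu(x,y)u) = \mu(\psi_\g(x),\psi_\g(y))\psi_\h(u)$. Next, for the composite terms $\rho(x)\rho(y)u$ and $\rho(y)\rho(x)u$, I would apply \eqref{crosshomo1} twice, first to the outer $\rho$ and then to the inner element $\rho(y)u$ (respectively $\rho(x)u$) regarded as the new argument, obtaining $\rho(\psi_\g(x))\rho(\psi_\g(y))\psi_\h(u)$ and $\rho(\psi_\g(y))\rho(\psi_\g(x))\psi_\h(u)$; their difference reproduces the commutator $[\rho(\psi_\g(x)),\rho(\psi_\g(y))]\psi_\h(u)$.

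The one step where more than the intertwining relations is used is the last summand $\rho([x,y]_\g)u$: here I would first apply \eqref{crosshomo1} to get $\rho(\psi_\g([x,y]_\g))\psi_\h(u)$, and then invoke the fact that $\psi_\g$ is a Lie-Yamaguti algebra homomorphism, so by \eqref{v1} it preserves the binary bracket, yielding $\rho([\psi_\g(x),\psi_\g(y)]_\g)\psi_\h(u)$. Collecting the five transformed terms and comparing with the formula \eqref{rep} evaluated at the pair $(\psi_\g(x),\psi_\g(y))$ gives exactly $D(\psi_\g(x),\psi_\g(y))\psi_\h(u)$, which is the claim.

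I do not anticipate any genuine obstacle: the computation is entirely mechanical once $D$ is written out via \eqref{rep}. The only points requiring a little care are the double application of \eqref{crosshomo1} for the composite $\rho\rho$ terms and the use of \eqref{v1} (rather than \eqref{v2}) for the bracket term, since $D$ involves only the binary operation $[\cdot,\cdot]_\g$ and not the ternary bracket; note also that the relation \eqref{crho1} linking $T'$ and $T$ plays no role here, as the identity is purely a statement about the action $(\rho,\mu)$ and the pair $(\psi_\g,\psi_\h)$.
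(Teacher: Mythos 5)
Your proposal is correct and matches the paper's (unwritten) argument: the paper simply asserts the identity follows from \eqref{crosshomo1}, \eqref{crosshomo2} and a direct computation, and your expansion of $D$ via \eqref{rep}, term-by-term intertwining, and the use of \eqref{v1} for the bracket term is exactly that computation. Nothing is missing.
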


The following proposition demonstrates that an action can be characterized by semidirect product \LYA s.

\begin{pro}
Let $(\rho,\mu)$ be an action of a \LYA ~$(\g,[\cdot,\cdot]_\g,\Courant{\cdot,\cdot,\cdot}_\g)$ on another \LYA ~$(\h,[\cdot,\cdot]_\h,\Courant{\cdot,\cdot,\cdot}_\h)$. Then there exists a \LYA ~structure $([\cdot,\cdot]_\ltimes,\Courant{\cdot,\cdot,\cdot}_\ltimes)$ on the direct sum $\g\oplus\h$ defined to be, for all $x,y,z\in \g,u,v,w\in \h$
\begin{eqnarray*}
[x+u,y+v]_\ltimes&=&[x,y]_\g+\rho(x)v-\rho(y)u+[u,v]_\h,\\
~\Courant{x+u,y+v,z+w}_\ltimes&=&\Courant{x,y,z}_\g+D(x,y)w+\mu(y,z)u-\mu(x,z)v+\Courant{u,v,w}_\h.
\end{eqnarray*}
This \LYA ~$(\g\oplus\h,[\cdot,\cdot]_\ltimes,\Courant{\cdot,\cdot,\cdot}_\ltimes)$ is called the {\bf semidirect \LYA} with respect to an action $(\rho,\mu)$, which is denoted by $\g\ltimes_{\rho,\mu}\h$.
\end{pro}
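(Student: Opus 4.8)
The plan is to verify that the pair $([\cdot,\cdot]_\ltimes,\Courant{\cdot,\cdot,\cdot}_\ltimes)$ satisfies the four defining axioms \eqref{LY1}--\eqref{fundamental} of a Lie-Yamaguti algebra on $\g\oplus\h$. The preliminary checks are immediate: $[\cdot,\cdot]_\ltimes$ is bilinear and skew-symmetric, and $\Courant{\cdot,\cdot,\cdot}_\ltimes$ is trilinear and skew-symmetric in its first two arguments, since each summand in the defining formulas inherits these properties from $[\cdot,\cdot]_\g$, $[\cdot,\cdot]_\h$, $\Courant{\cdot,\cdot,\cdot}_\g$, $\Courant{\cdot,\cdot,\cdot}_\h$, and from the (skew-)linearity of $\rho$, $\mu$ and $D$. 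The substance of the proof is the expansion of each axiom on general elements $X=x+u$, $Y=y+v$, $Z=z+w,\ldots$ with $x,y,z\in\g$ and $u,v,w\in\h$, followed by a careful sorting of the resulting terms.

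The key organizing principle is that, upon expansion, every axiom decomposes into three groups of terms. First, the terms lying entirely in $\g$ --- those built only from $[\cdot,\cdot]_\g$, $\Courant{\cdot,\cdot,\cdot}_\g$ and the $\g$-components --- which cancel by the corresponding axiom for $\g$. Second, the terms lying entirely in $\h$, which cancel by the corresponding axiom for $\h$. Third, the mixed terms, i.e. those in which at least one of $\rho$, $\mu$, $D$ is applied. I would further split the mixed terms into two classes. The first class consists of the mixed terms in which $\rho$, $\mu$, $D$ act only on elements of $\h$; these are exactly the mixed terms produced by the semidirect product of $\g$ with the representation $(\h;\rho,\mu)$ regarded as a mere vector space, and they cancel by the representation axioms \eqref{RLYb}--\eqref{RLY5} together with \eqref{rep}. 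The second class consists of the genuinely cross terms: those in which $\rho$, $\mu$ or $D$ is applied to an $\h$-bracket (such as $\rho(z)[u,v]_\h$ or $\mu(z,s)\Courant{u,v,w}_\h$), and those in which an element of the image of $\rho$, $\mu$ or $D$ occupies a slot of an $\h$-bracket (such as $[\rho(x)v,w]_\h$ or $\Courant{\rho(x)v,w,t}_\h$). Each term in this second class vanishes termwise: the former by the annihilation conditions \eqref{*2}--\eqref{*3} and their consequence $D(x,y)[u,v]_\h=D(x,y)\Courant{u,v,w}_\h=0$, the latter by the center condition \eqref{*1}, since $\rho(x)v,\mu(x,y)v,D(x,y)v\in C(\h)$ are absorbed by the brackets of $\h$.

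I would carry this out explicitly for one representative identity --- say \eqref{LY1}, where one writes $[X,Y]_\ltimes=[x,y]_\g+P$ with $P=\rho(x)v-\rho(y)u+[u,v]_\h\in\h$, substitutes into the outer bracket, and observes the three groups emerging exactly as above --- and then indicate that \eqref{LY2}, \eqref{LY3} and \eqref{fundamental} follow by the same bookkeeping. The main obstacle is the fundamental identity \eqref{fundamental}: being quadratic in the ternary bracket, it produces by far the largest number of terms, and the delicate point is to confirm that after the three-way split every mixed term lands in exactly one of the two vanishing classes, with no surplus term surviving. In particular one must track carefully the terms in which a $D$- or $\mu$-image lands inside an $\h$-bracket, relying on \eqref{*1} to kill them; this is where the center hypothesis is genuinely used and where an oversight would most easily occur.
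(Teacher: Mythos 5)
Your plan is correct and is exactly the direct verification that the paper itself omits (its proof reads only ``It only involves computations, and we omit the details''): expand each axiom, let the pure $\g$-terms and pure $\h$-terms cancel by the respective Lie-Yamaguti identities, let the representation-type mixed terms cancel via \eqref{RLYb}--\eqref{rep}, and kill the genuinely cross terms with \eqref{*1}--\eqref{*3}. Since you supply the bookkeeping the paper suppresses, and correctly locate where the center and annihilation hypotheses are used, there is nothing to object to.
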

\begin{proof}
It only involves computations, and we omit the details.
\end{proof}

\begin{thm}
Let $(\g,[\cdot,\cdot]_\g,\Courant{\cdot,\cdot,\cdot}_\g)$ and $(\h,[\cdot,\cdot]_\h,\Courant{\cdot,\cdot,\cdot}_\h)$ be two \LYA s, and $(\rho,\mu)$ an action of $\g$ on $\h$. Then a linear map $T:\h\longrightarrow\g$ is a relative Rota-Baxter operator of weight $1$ with respect to $(\rho,\mu)$ if and only if the graph of $T$
$$Gr(T):=\{Tu+u:u\in \h\}$$
is a subalgebra of the semidirect \LYA ~$\g\ltimes_{\rho,\mu}\h$.
\end{thm}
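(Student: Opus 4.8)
The plan is to prove both directions by directly unwinding what it means for $Gr(T)$ to be closed under the two brackets $[\cdot,\cdot]_\ltimes$ and $\Courant{\cdot,\cdot,\cdot}_\ltimes$ of the semidirect product $\g\ltimes_{\rho,\mu}\h$, and matching the resulting conditions against the defining equations \eqref{chomo1} and \eqref{chomo2} of a relative Rota-Baxter operator of weight $1$. First I would take two arbitrary elements $Tu+u$ and $Tv+v$ of $Gr(T)$ and compute their bracket using the formula for $[\cdot,\cdot]_\ltimes$:
\begin{eqnarray*}
[Tu+u,Tv+v]_\ltimes=[Tu,Tv]_\g+\rho(Tu)v-\rho(Tv)u+[u,v]_\h.
\end{eqnarray*}
The $\g$-component is $[Tu,Tv]_\g$ and the $\h$-component is $\rho(Tu)v-\rho(Tv)u+[u,v]_\h$. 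This element lies in $Gr(T)$ precisely when its $\g$-component equals $T$ applied to its $\h$-component, i.e. when $[Tu,Tv]_\g=T\big(\rho(Tu)v-\rho(Tv)u+[u,v]_\h\big)$, which is exactly \eqref{chomo1}.

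Next I would do the same for the ternary bracket: for $Tu+u,Tv+v,Tw+w\in Gr(T)$,
\begin{eqnarray*}
\Courant{Tu+u,Tv+v,Tw+w}_\ltimes=\Courant{Tu,Tv,Tw}_\g+D(Tu,Tv)w+\mu(Tv,Tw)u-\mu(Tu,Tw)v+\Courant{u,v,w}_\h.
\end{eqnarray*}
Again the $\g$-component is $\Courant{Tu,Tv,Tw}_\g$ and the $\h$-component is $D(Tu,Tv)w+\mu(Tv,Tw)u-\mu(Tu,Tw)v+\Courant{u,v,w}_\h$, and membership in $Gr(T)$ amounts to the $\g$-component being $T$ of the $\h$-component, which is precisely \eqref{chomo2}. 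Thus $Gr(T)$ is closed under both operations if and only if \eqref{chomo1} and \eqref{chomo2} hold for all $u,v,w\in\h$, i.e. if and only if $T$ is a relative Rota-Baxter operator of weight $1$.

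For the backward direction there is nothing further to check beyond reading the computation in reverse. For the forward direction I should note one small bookkeeping point: to conclude that $Gr(T)$ is a genuine \emph{subalgebra} I must verify it is a linear subspace, which is immediate since $T$ is linear and so $Gr(T)=\{Tu+u\mid u\in\h\}$ is the image of the linear injection $u\mapsto Tu+u$; closure under the two brackets is then exactly the content above. The main (and essentially only) obstacle is purely organizational rather than conceptual: one must correctly extract the $\g$- and $\h$-components from the semidirect product formulas and observe that, because the direct sum $\g\oplus\h$ is internal, an element $a+b$ with $a\in\g,\,b\in\h$ lies in $Gr(T)$ if and only if $a=Tb$. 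Once that identification is made, the equivalence with \eqref{chomo1} and \eqref{chomo2} is immediate, and no property of the action $(\rho,\mu)$ beyond the bracket formulas themselves is needed.
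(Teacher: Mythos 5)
Your proof is correct and follows essentially the same route as the paper's: compute $[Tu+u,Tv+v]_\ltimes$ and $\Courant{Tu+u,Tv+v,Tw+w}_\ltimes$ in the semidirect product, and observe that closure of $Gr(T)$ is equivalent to the $\g$-component being $T$ of the $\h$-component, i.e.\ to Eqs.~\eqref{chomo1} and \eqref{chomo2}. Your extra remark that $Gr(T)$ is automatically a linear subspace is a small but welcome addition the paper leaves implicit.
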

\begin{proof}
Let $T:\h\longrightarrow\g$ be a linear map. Then for all $u,v,w\in \h$, we have
\begin{eqnarray*}
~ &&[Tu+u,Tv+v]_\ltimes\\
~ &=&[Tu,Tv]_\g+\rho(Tu)v-\rho(Tv)u+[u,v]_\h,
\end{eqnarray*}
and
\begin{eqnarray*}
~&&\Courant{Tu+u,Tv+v,Tw+w}_\ltimes\\
~ &=&\Courant{Tu,Tv,Tw}_\g+D(Tu,Tv)w+\mu(Tv,Tw)u-\mu(Tu,Tw)v+\Courant{u,v,w}_\h.
\end{eqnarray*}
Thus the graph of $T$ is a subalgebra of $\g\ltimes\h$ if and only if
\begin{eqnarray*}
[Tu,Tv]_\g&=&T\Big(\rho(Tu)v-\rho(Tv)u+[u,v]_\h\Big),\\
~ \Courant{Tu,Tv,Tw}_\g&=&\Big(D(Tu,Tv)w+\mu(Tv,Tw)u-\mu(Tu,Tw)v+\Courant{u,v,w}_\h\Big),
\end{eqnarray*}
which implies that $T:\h\longrightarrow\g$ is a relative Rota-Baxter operator of weight $1$. This completes the proof.
\end{proof}

In the sequel, we give the relationship between relative Rota-Baxter operator of weight $1$ and Nijenhuis operators. Recall that in \cite{Sheng Zhao}, Sheng and the first author et. al gave the notion of Nijenhuis operators on \LYA s. Let $(\g,[\cdot,\cdot]_\g,\Courant{\cdot,\cdot,\cdot}_\g)$ be a \LYA. A linear map $N:\g\longrightarrow\g$ is called a {\bf Nijenhuis operator}, if the following equalities hold
\begin{eqnarray*}
[Nx,Ny]_\g&=&N\Big([Nx,y]_\g+[x,Ny]_\g-N[x,y]_\g\Big),\\
~\Courant{Nx,Ny,Nz}_\g&=&N\Big(\Courant{Nx,Ny,z}_\g+\Courant{Nx,y,Nz}_\g+\Courant{x,Ny,Nz}_\g\\
~ &&-N\Courant{Nx,y,z}_\g-N\Courant{x,Ny,z}_\g-N\Courant{x,y,Nz}_\g\\
~ &&+N^2\Courant{x,y,z}_\g\Big), \quad\forall x,y,z\in \g.
\end{eqnarray*}

\begin{pro}
Let $(\g,[\cdot,\cdot]_\g,\Courant{\cdot,\cdot,\cdot}_\g)$ and $(\h,[\cdot,\cdot]_\h,\Courant{\cdot,\cdot,\cdot}_\h)$ be two \LYA s, and $(\rho,\mu)$ an action of $\g$ on $\h$. Then a linear map $T:\h\longrightarrow\g$ is a relative Rota-Baxter operator of weight $1$ with respect to $(\rho,\mu)$ if and only if
$$\overline{T}=
\begin{pmatrix}
\Id & T \\
0 & 0
\end{pmatrix}$$
is a Nijenhuis operator on the semidirect \LYA ~$\g\ltimes_{\rho,\mu}\h$.
\end{pro}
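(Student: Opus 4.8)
The plan is to verify the two defining identities of a Nijenhuis operator for $N:=\overline{T}$ by direct substitution, reading off each identity on the two summands of $\g\ltimes_{\rho,\mu}\h$ separately. The computation rests on one structural observation: since $\overline{T}(x+u)=x+Tu\in\g$ for all $x\in\g,\ u\in\h$, the operator $\overline{T}$ is an idempotent, i.e. $\overline{T}^2=\overline{T}$ (equivalently $\begin{pmatrix}\Id&T\\0&0\end{pmatrix}^2=\begin{pmatrix}\Id&T\\0&0\end{pmatrix}$). This has two payoffs. First, the cubic Nijenhuis identity simplifies, because the term $N^2\Courant{x,y,z}$ collapses to $N\Courant{x,y,z}$. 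Second, every term on both sides of each Nijenhuis identity lands in $\g=\Img\overline{T}$, so the $\h$-components vanish identically and all the content sits in the $\g$-component, where $N$ acts as $a+b\mapsto a+Tb$.

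For the bilinear identity I would set $X=x+u,\ Y=y+v$ and expand $[NX,NY]_\ltimes$ and $N\big([NX,Y]_\ltimes+[X,NY]_\ltimes-N[X,Y]_\ltimes\big)$ using the semidirect bracket of the preceding proposition. On the left the $\g$-bracket $[x+Tu,y+Tv]_\g$ splits into four pieces, of which the three mixed ones cancel against the pure $\g$-bracket terms produced on the right, leaving $[Tu,Tv]_\g$. The surviving $T$-corrections, coming from applying $\overline{T}$ to the $\h$-components $\rho(x+Tu)v-\rho(y+Tv)u$ and from the term $-N[X,Y]_\ltimes$, collect (after the $\rho(x)v$ and $\rho(y)u$ pieces cancel) into $T\big(\rho(Tu)v-\rho(Tv)u+[u,v]_\h\big)$, which is exactly Eq. \eqref{chomo1}.

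For the cubic identity the bookkeeping is the real work. I would first expand $\Courant{x+Tu,y+Tv,z+Tw}_\g$ into its eight multilinear summands and observe that the three ``two-$N$'' brackets $\Courant{NX,NY,Z}_\ltimes,\ \Courant{NX,Y,NZ}_\ltimes,\ \Courant{X,NY,NZ}_\ltimes$, together with the pure-$\g$ parts of the remaining terms, reproduce all of these summands except the fully contracted one $\Courant{Tu,Tv,Tw}_\g$, which therefore survives on the left. It then remains to show that all the $T$-corrections assemble into $T\big(D(Tu,Tv)w+\mu(Tv,Tw)u-\mu(Tu,Tw)v+\Courant{u,v,w}_\h\big)$, i.e. Eq. \eqref{chomo2}. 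These corrections have two sources: $T$ applied to the $\h$-components of the three two-$N$ brackets, namely $D(x+Tu,y+Tv)w-\mu(x+Tu,z+Tw)v+\mu(y+Tv,z+Tw)u$, and $T$ applied to the $\h$-components of $-N\Courant{NX,Y,Z}_\ltimes-N\Courant{X,NY,Z}_\ltimes-N\Courant{X,Y,NZ}_\ltimes+N\Courant{X,Y,Z}_\ltimes$, where idempotency is what lets me handle the final $N$ correctly. The main obstacle is precisely this cancellation: every $D$ with an untwisted argument, every $\mu$ with an untwisted argument, and every bare-bracket contribution must drop out, leaving only the fully twisted terms $D(Tu,Tv)w,\ \mu(Tv,Tw)u,\ \mu(Tu,Tw)v$ and $\Courant{u,v,w}_\h$. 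Organizing the contributions by the number of $T$'s they carry, and using bilinearity of $D$ and $\mu$, is what makes these cancellations transparent.

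Finally, since each reduction above is an equivalence, the two Nijenhuis identities for $\overline{T}$ hold if and only if Eqs. \eqref{chomo1} and \eqref{chomo2} hold, which is the assertion. A conceptually cleaner route that I would at least record is this: for an idempotent $N$ the Nijenhuis identities are equivalent to both $\Img N$ and $\Ker N$ being subalgebras of $\g\ltimes_{\rho,\mu}\h$ (the mixed cases of the Nijenhuis torsion vanish automatically when $N^2=N$). Here $\Img\overline{T}=\g$ is automatically a subalgebra, while $\Ker\overline{T}=\{-Tu+u:u\in\h\}$ is a graph, so the statement reduces directly to the graph characterization of relative Rota-Baxter operators established above.
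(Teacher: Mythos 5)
Your main argument is the same direct expansion that the paper itself performs, and for the ternary identity it does go through. But the key claim in your binary computation --- that the $T$-corrections ``collect into $T(\rho(Tu)v-\rho(Tv)u+[u,v]_\h)$'' --- fails on careful bookkeeping, and the failure cannot be repaired by rearranging terms. The only place a $[u,v]_\h$ can arise is the term $-\overline{T}[x+u,y+v]_\ltimes$, whose value is $-[x,y]_\g-T\big(\rho(x)v-\rho(y)u+[u,v]_\h\big)$ with vanishing $\h$-component; the other two brackets contribute the $\h$-components $\rho(x)v+\rho(Tu)v$ and $-\rho(y)u-\rho(Tv)u$, which after the outer $\overline{T}$ become $T\big(\rho(x)v+\rho(Tu)v-\rho(y)u-\rho(Tv)u\big)$. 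Summing, the $T\rho(x)v$ and $T\rho(y)u$ pieces cancel as you say, but $[u,v]_\h$ survives with a \emph{minus} sign: the binary Nijenhuis identity for $\overline{T}$ is equivalent to $[Tu,Tv]_\g=T\big(\rho(Tu)v-\rho(Tv)u-[u,v]_\h\big)$, which is a weight $-1$ condition, not Eq.~\eqref{chomo1}. (The paper's own displayed computation makes the same slip, so you have reproduced its proof, error included; the equivalence as stated holds for the ternary bracket but not the binary one, and would require replacing $\overline{T}$ by $\begin{pmatrix}\Id&-T\\0&0\end{pmatrix}$ or changing the sign of $[u,v]_\h$ in \eqref{chomo1}.)

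The alternative route you sketch at the end is actually the quickest way to see this. The image/kernel criterion for an idempotent Nijenhuis operator is correct, and $\Img\overline{T}=\g$ is automatically a subalgebra; but $\Ker\overline{T}=\{-Tu+u:u\in\h\}$ is the graph of $-T$, not of $T$. Feeding $-T$ into the graph characterization, the ternary condition is unchanged (the overall sign $(-1)^3$ on the left matches the single outer sign on the right, the bilinear arguments of $D$ and $\mu$ absorbing the other signs in pairs), while the binary condition becomes exactly $[Tu,Tv]_\g=T\big(\rho(Tu)v-\rho(Tv)u-[u,v]_\h\big)$. So the sanity check you propose would have exposed the discrepancy; run it, and fix either the matrix or the sign before asserting the equivalence.
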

\begin{proof}
For all $x,y,z\in \g,~u,v,w\in \h$, on one hand, we have
\begin{eqnarray*}
~ &&[\overline{T}(x+u),\overline{T}(y+v)]_\ltimes=[x+Tu,y+Tv]_\g\\
~ &=&[x,y]_\g+[x,Tv]_\g+[Tu,y]_\g+[Tu,Tv]_\g,
\end{eqnarray*}
and
\begin{eqnarray*}
~ &&\Courant{\overline{T}(x+u),\overline{T}(y+v),\overline{T}(z+w)}_\ltimes=\Courant{x+Tu,y+Tv,z+Tw}_\g\\
~ &=&\Courant{x,y,z}_\g+\Courant{Tu,Tv,z}_\g+\Courant{Tu,y,Tw}_\g+\Courant{x,Tv,Tw}_\g\\
~ &&+\Courant{Tu,y,z}_\g+\Courant{x,Tv,z}_\g+\Courant{x,y,Tw}_\g+\Courant{Tu,Tv,Tw}_\g.
\end{eqnarray*}
On the other hand, since $\overline{T}^2=\overline{T}$, we have
\begin{eqnarray*}
~ &&\overline{T}\Big([\overline{T}(x+u),y+v]_\ltimes+[x+u,\overline{T}(y+v)]_\ltimes-\overline{T}[x+u,y+v]_\ltimes\Big)\\
~ &=&[x,y]_\g+[x,Tv]_\g+[Tu,y]_\g+T\Big(\rho(Tu)v-\rho(Tv)u+[u,v]_\h\Big),
\end{eqnarray*}
and
\begin{eqnarray*}
~ &&\overline{T}\Big(\Courant{\overline{T}(x+u),\overline{T}(y+v),z+w}_\ltimes+\Courant{\overline{T}(x+u),y+v,\overline{T}(z+w)}_\ltimes
+\Courant{x+u,\overline{T}(y+v),\overline{T}(z+w)}_\ltimes\Big)\\
~ &&-\overline{T}^2\Big(\Courant{\overline{T}(x+u),y+v,z+w}_\ltimes+\Courant{x+u,\overline{T}(y+v),z+w}_\ltimes
+\Courant{x+u,y+v,\overline{T}(z+w)}_\ltimes\Big)\\
~ &&+\overline{T}^3\Courant{x+u,y+v,z+w}_\ltimes\\
~ &=&\Courant{x,y,z}_\g+\Courant{Tu,Tv,z}_\g+\Courant{Tu,y,Tw}_\g+\Courant{x,Tv,Tw}_\g+\Courant{Tu,y,z}_\g+\Courant{x,Tv,z}_\g+\Courant{x,y,Tw}_\g\\
~ &&+T\Big(D(Tu,Tv)w+\mu(Tv,Tw)u-\mu(Tu,Tw)v+\Courant{u,v,w}_\h\Big).
\end{eqnarray*}
Thus $\overline{T}$ is a Nijenhuis operator on the semidirect \LYA ~$\g\ltimes\h$ if and only if $T$ is a relative Rota-Baxter operator of weight $1$ from $\h$ to $\g$. This finishes the proof.
\end{proof}

\begin{pro}
Let $(\g,[\cdot,\cdot]_\g,\Courant{\cdot,\cdot,\cdot}_\g)$ be a \LYA ~such that the adjoint representation $(\g;\ad,\frkR)$ is an action of the \LYA ~$\g$ on itself. Let $\h$ be an abelian subalgebra of $\g$ such that $\g^1\cap\h=0$. Suppose that $\mathfrak{t}$ is a compliment of $\h$, i.e., $\g=\mathfrak{t}\oplus\h$ as vector spaces. Then the projection $P:\g\longrightarrow\g$ onto the subspace $\h$ is a relative Rota-Baxter operator of weight $1$ from $\g$ to $\g$ with respect to the action $(\ad,\frkR)$.
\end{pro}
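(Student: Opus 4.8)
The plan is to verify the two defining identities \eqref{chomo1} and \eqref{chomo2} of a relative Rota-Baxter operator of weight $1$ directly, with $T=P$ and the action specialized to $(\rho,\mu)=(\ad,\frkR)$, so that $D=\frkL$. By Example \ref{ad} this means $\rho(Pu)v=[Pu,v]_\g$, $\frkR_{Pv,Pw}u=\Courant{u,Pv,Pw}_\g$, and $\frkL_{Pu,Pv}w=\Courant{Pu,Pv,w}_\g$. Before computing, I would record what the hypothesis that the adjoint representation is an action buys us: reading the action conditions \eqref{*1}--\eqref{*3} with $\h=\g$ gives $[\g,\g]_\g\subseteq C(\g)$ and $\Courant{\g,\g,\g}_\g\subseteq C(\g)$, that every bracket having a derived element as an entry vanishes, and---feeding $\Courant{\g,\g,\g}_\g\subseteq C(\g)$ into \eqref{LY3}---that a derived element is absorbing in every slot of both brackets.

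The left-hand sides are immediate. Since $Pu,Pv,Pw$ all lie in the image $\h$ of $P$ and $\h$ is abelian, we get $[Pu,Pv]_\g=0$ and $\Courant{Pu,Pv,Pw}_\g=0$. Hence in both \eqref{chomo1} and \eqref{chomo2} the left-hand side is $0$, and the whole claim collapses to the assertion that $P$ annihilates each right-hand argument.

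For the right-hand sides I would observe that the argument of \eqref{chomo1}, namely $[Pu,v]_\g-[Pv,u]_\g+[u,v]_\g$, plainly lies in $[\g,\g]_\g$, while the argument of \eqref{chomo2}, namely $\Courant{Pu,Pv,w}_\g+\Courant{u,Pv,Pw}_\g-\Courant{v,Pu,Pw}_\g+\Courant{u,v,w}_\g$, plainly lies in $\Courant{\g,\g,\g}_\g$. The goal is then to show that each of these in fact sits in the derived algebra $\g^1$ and is killed by $P$: since $P$ maps $\g$ onto $\h$ while $\g^1\cap\h=0$, any element of $\g^1$ lying in the image of $P$ is forced to be $0$.

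The crux---and the step I expect to be the main obstacle---is precisely this last reduction. Placing the two arguments in $[\g,\g]_\g$ and $\Courant{\g,\g,\g}_\g$ respectively is trivial, but invoking $\g^1\cap\h=0$ requires knowing that $P$ vanishes on the relevant derived elements, i.e. that they lie in $\mathfrak{t}=\Ker P$. This is where the centrality and absorbing properties coming from the action hypothesis, the Lie-Yamaguti axioms \eqref{LY1}--\eqref{fundamental}, and the complementarity $\g=\mathfrak{t}\oplus\h$ with $\g^1\cap\h=0$ must be used together; I expect the careful bookkeeping at this point, rather than any single identity, to be the heart of the proof. Everything else reduces to routine expansions using bilinearity, the skew-symmetry of $[\cdot,\cdot]_\g$ and $\Courant{\cdot,\cdot,\cdot}_\g$ in their first two arguments, and the abelian hypothesis on $\h$.
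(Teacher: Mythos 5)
Your reduction is carried out correctly and follows the same route as the paper's own proof: both left-hand sides vanish because $Pu,Pv,Pw\in\h$ and $\h$ is abelian, and the two right-hand arguments visibly lie in $[\g,\g]_\g$ and $\Courant{\g,\g,\g}_\g$ respectively, so everything comes down to showing that $P$ annihilates those two subspaces. The problem is that you stop exactly there: you call this step ``the crux'' and ``the main obstacle,'' predict that it needs ``careful bookkeeping'' combining the action hypothesis, the Lie--Yamaguti axioms and the complementarity, and then never perform it. That is a genuine gap, not a stylistic omission. Concretely, what must be shown is $[\g,\g]_\g+\Courant{\g,\g,\g}_\g\subseteq\Ker P=\frkt$, and this does not follow from the stated hypothesis $\g^1\cap\h=0$ for two reasons: first, $\g^1$ is defined in the paper as the \emph{intersection} $[\g,\g]_\g\cap\Courant{\g,\g,\g}_\g$, so knowing the arguments lie in $[\g,\g]_\g$ does not even place them in $\g^1$; second, and more seriously, $\g^1\cap\h=0$ only says that $\g^1$ meets $\h$ trivially, not that $\g^1$ is contained in the chosen complement $\frkt$ --- for $w\in\g^1$ the component $P(w)\in\h$ need not itself lie in $\g^1$, so your observation that ``any element of $\g^1$ lying in the image of $P$ is forced to be $0$'' does not apply to $P(w)$. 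You notice this yourself, which is why you reroute to ``they lie in $\frkt=\Ker P$,'' but no argument for that containment is ever supplied, and none is available from the hypotheses as you have used them.

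For what it is worth, the paper's own proof is no more explicit at this point: it writes down the same two expressions, asserts they vanish ``since $\h$ is abelian and $\g^1\cap\h=0$,'' and moves on. So you have correctly located the precise spot where the published argument is thinnest; but as submitted, your proposal is an accurate plan with the decisive step missing rather than a complete proof.
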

\begin{proof}
For all $x,y,z\in \g$, denote by $\bar x,\bar y,\bar z$ the images of $x,y,z$ by the projection $P$. Since $\h$ is abelian and $\g^1\cap\h=0$, we have
\begin{eqnarray*}
~ &&[Px,Py]_\g-P\Big([Px,y]_\g-[x,Py]_\g-[x,y]_\g\Big)\\
~ &=&[\bar x,\bar y]_\g-P\Big([\bar x,y]_\g-[x,\bar y]_\g-[x,y]_\g\Big)\\
~ &=&0.
\end{eqnarray*}
Similarly, we also have that
\begin{eqnarray*}
~ &&\Courant{Px,Py,Pz}_\g-P\Big(\Courant{Px,Py,z}_\g+\Courant{x,Py,Pz}_\g-\Courant{y,Px,Pz}_\g+\Courant{x,y,z}_\g\Big)\\
~ &&\Courant{\bar x,\bar y,\bar z}_\g-P\Big(\Courant{\bar x,\bar y,z}_\g+\Courant{x,\bar y,\bar z}_\g-\Courant{y,\bar x,\bar z}_\g+\Courant{x,y,z}_\g\Big)\\
~ &=&0.
\end{eqnarray*}
Thus $P$ is a relative Rota-Baxter operator of weight $1$ from $\g$ to $\g$ with respect to the action $(\ad,\frkR)$.
\end{proof}

At the end of this section, we give an example of relative Rota-Baxter operator of weight $1$ on \LYA s.
\begin{ex}
Let $(\g,[\cdot,\cdot],\Courant{\cdot,\cdot,\cdot})$ be a $4$-dimensional \LYA, and $\{e_1,e_2,e_3,e_4\}$ a basis. The nonzero brackets are given by
$$[e_1,e_2]=2e_4,\quad\quad \Courant{e_1,e_2,e_1}=e_4.$$
It is obvious that the center of $\g$ is spanned by $\{e_3,e_4\}$, and that the adjoint representation $(\g;\ad,\frkR)$ is an action of $\g$ on itself. Let $\h$ be an abelian subalgebra spanned by $\{e_1,e_2\}$. Then by Proposition, the projection $P:\g\longrightarrow\g$ defined to be
\begin{eqnarray*}
\begin{cases}
P(e_1)=e_1,\\
P(e_2)=e_2,\\
P(e_3)=0,\\
P(e_4)=0,
\end{cases}
\end{eqnarray*}
is a relative Rota-Baxter operator of weight $1$ from $\g$ to $\g$ with respect to the action $(\ad,\frkR)$.
\end{ex}

\section{Post Lie-Yamaguti algebras}
In this section, we introduce the notion of post-\LYA s, which is the underlying algebraic structure of relative Rota-Baxter operators of nonzero weights. Moreover, we show that there exists a \LYA ~structure on a post-\LYA ~and that an action can also be obtained via the given post-\LYA.

\begin{defi}
A {\bf post-\LYA} is a vector space $A$ endowed with two binary operations $\cdot:\wedge^2A\longrightarrow A,~*:\otimes^2A\longrightarrow A$ and two ternary operations $\langle\cdot,\cdot,\cdot\rangle:\wedge^2A\otimes A\longrightarrow A,~ \{\cdot,\cdot,\cdot\}:\otimes^3A\longrightarrow A$ such that $(A,\cdot,\langle\cdot,\cdot,\cdot\rangle)$ is a \LYA ~and the following equalities hold for all $x,y,z,w,t\in A$,
\begin{eqnarray}
~ \{z,[x,y]_C,w\}&=&\{y*z,x,w\}-\{x*z,y,w\},\label{post1}\\
~ \{x,y,[z,w]_c\}&=&z*\{x,y,w\}-w*\{x,y,z\},\\
~ \Courant{x,y,z}_C*w&=&\{x,y,z*w\}_D-z*\{x,y,w\}_D,\label{post3}\\
~ \{x,y,\Courant{z,w,t}_C\}&=&\{\{x,w,z\},w,t\}-\{\{x,y,w\},z,t\}+\{z,w,\{x,y,t\}_D \},\\
~ \{x,y,\{z,w,t\}_D\}&=&\{\{x,y,z\}_D,w,t\}+\{z,\Courant{x,y,w}_C,t\}+\{z,w,\Courant{x,y,t}_C\},\label{post5}\\
~  (x*y)\cdot z&=&\langle x*y,z,w\rangle=\langle z,w,x*y\rangle=0,\label{post6}\\
~  x*(yz)&=&\{xy,z,w\}=0,\\
~  x*\langle z,w,t\rangle&=&\{\langle z,w,t\rangle, x,y\}=0,\label{post8}
\end{eqnarray}
where operations $[\cdot,\cdot]_C,~\Courant{\cdot,\cdot,\cdot}_C$ and $\{\cdot,\cdot,\cdot\}_D$ are defined to be
\begin{eqnarray}
[x,y]_C&=&x*y-y*x+xy,\label{sub1}\\
~\Courant{x,y,z}_C&=&\{x,y,z\}_D+\{x,y,z\}-\{y,x,z\}+\langle x,y,z\rangle,\label{sub2}\\
~ \{x,y,z\}_D&=&\{z,y,x\}-\{z,x,y\}+(y,x,z)-(x,y,z)-(xy)*z,\quad\forall x,y,z\in A.\label{dbra}
\end{eqnarray}
Here the notation $(\cdot,\cdot,\cdot)$ stands for the associator with respect to the operation $*$, i.e., for all $x,y,z\in A,$
$$(x,y,z)=(x*y)*z-x*(y*z).$$
\end{defi}

\begin{rmk}
Let $(A,\cdot,*,\{\cdot,\cdot,\cdot\},\langle\cdot,\cdot,\cdot\rangle)$ be a post-\LYA. If $xy=\langle x,y,z\rangle=0$ for all $x,y,z\in A$, then $(A,*,\{\cdot,\cdot,\cdot\})$ becomes a pre-\LYA, which is the underlying algebraic structure of relative Rota-Baxter operators of zero weights. One can see \cite{Sheng Zhao} for more details.
\end{rmk}

\begin{defi}
Let $(A,\cdot_{A},*_{A},\{\cdot,\cdot,\cdot\}_{A},\langle\cdot,\cdot,\cdot\rangle_{A})$ and $(A',\cdot_{A'},*_{A'},\{\cdot,\cdot,\cdot\}_{A'},\langle\cdot,\cdot,\cdot\rangle_{A'})$ be two post-\LYA. A {\bf homomorphism} from $A$ to $A'$ is a linear map $\psi:A\longrightarrow A'$ such that $\psi:(A,\cdot_{A},\langle\cdot,\cdot,\cdot\rangle_{A})\longrightarrow (A,\cdot_{A'},\langle\cdot,\cdot,\cdot\rangle_{A'})$ is a \LYA ~homomorphism and the following equalities hold
\begin{eqnarray*}
\psi(x*_Ay)&=&\psi(x)*_{A'}\psi(y),\\
~\psi(\{x,y,z\}_A)&=&\{\psi(x),\psi(y),\psi(z)\}_{A'},\quad\forall x,y,z\in A.
\end{eqnarray*}
\end{defi}

The following Theorem reveals the fact that a post-\LYA ~gives rise to a \LYA ~structure on its original space.
\begin{thm}\label{thm:sub}
Let $(A,\cdot,*,\{\cdot,\cdot,\cdot\},\langle\cdot,\cdot,\cdot\rangle)$ be a post-\LYA. Then
\begin{itemize}
\item [\rm (i)] the triple $(A,[\cdot,\cdot]_C,\Courant{\cdot,\cdot,\cdot}_C)$ is a \LYA, which is called the {\bf sub-adjacent \LYA} of $(A,\cdot,*,\{\cdot,\cdot,\cdot\},\langle\cdot,\cdot,\cdot\rangle)$ and is denoted by $A^c$, where $[\cdot,\cdot]_C$ and $\Courant{\cdot,\cdot,\cdot}_C$ are given by \eqref{sub1} and \eqref{sub2} respectively;
\item[\rm (ii)] the pair $(L,\huaR)$ is an action of the sub-adjacent \LYA ~$(A,[\cdot,\cdot]_C,\Courant{\cdot,\cdot,\cdot}_C)$ on the \LYA ~$(A,\cdot,\langle\cdot,\cdot,\cdot\rangle)$, where the linear maps $L:A\longrightarrow\gl(A), ~x\mapsto L(x)$ and $\huaR:\otimes^2A\longrightarrow A, ~(x,y)\mapsto \huaR(x,y)$ are defined to be
$$L(x)z=x*z,\quad \huaR(x,y)z=\{z,x,y\},\quad \forall z\in A$$
respectively;
\item[\rm (iii)] the identity map ${\Id}:A\longrightarrow A$ is a relative Rota-Baxter operator of weight $1$ from $(A,\cdot,\langle\cdot,\cdot,\cdot\rangle)$ to $(A,[\cdot,\cdot]_C,\Courant{\cdot,\cdot,\cdot}_C)$.
\end{itemize}
\end{thm}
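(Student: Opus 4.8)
The plan is to treat the three statements as one package, organized around the auxiliary operation $\{\cdot,\cdot,\cdot\}_D$ from \eqref{dbra}. The linchpin, which I would record first, is the identity
\[
D_{L,\huaR}(x,y)z=\{x,y,z\}_D\qquad\text{for all }x,y,z\in A,
\]
where $D_{L,\huaR}$ is the operator attached to $(L,\huaR)$ by \eqref{rep}. This is a short computation: expanding $D_{L,\huaR}(x,y)=\huaR(y,x)-\huaR(x,y)+[L(x),L(y)]-L([x,y]_C)$ on $z$ yields $\{z,y,x\}-\{z,x,y\}+x*(y*z)-y*(x*z)-[x,y]_C*z$, and substituting \eqref{sub1} into the associator terms of \eqref{dbra} shows this equals $\{x,y,z\}_D$. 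Note that $[x,y]_C$ is defined by \eqref{sub1} independently of whether $A^c$ is yet known to be a Lie-Yamaguti algebra, so stating this identity is not circular. Once it is in place, every occurrence of the symbol $D$ in the representation axioms \eqref{RLYb}--\eqref{RLY5} and in the operator equations \eqref{chomo1}--\eqref{chomo2} is literally $\{\cdot,\cdot,\cdot\}_D$, which removes all notational friction from the rest of the argument.

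I would then prove part (i) as the substantive step, since it must precede the phrase ``representation of the Lie-Yamaguti algebra $A^c$'' used in (ii). The only self-contained route is a direct check that $[\cdot,\cdot]_C$ and $\Courant{\cdot,\cdot,\cdot}_C$ satisfy \eqref{LY1}--\eqref{fundamental}. First I would dispatch the symmetry requirements: $[\cdot,\cdot]_C$ is skew because $\cdot$ is skew, and $\Courant{\cdot,\cdot,\cdot}_C$ is skew in its first two slots because $\{\cdot,\cdot,z\}_D$ is, which follows in two lines from \eqref{dbra} using $xy+yx=0$. Then each of the four Lie-Yamaguti identities is expanded through \eqref{sub1}, \eqref{sub2} and the identity $D_{L,\huaR}=\{\cdot,\cdot,\cdot\}_D$ into the four generating operations, and collapsed to zero using \eqref{post1}--\eqref{post8} together with the hypothesis that $(A,\cdot,\langle\cdot,\cdot,\cdot\rangle)$ is itself a Lie-Yamaguti algebra. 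I expect this to be the main obstacle: the computation is long, and the fundamental identity \eqref{fundamental} for $\Courant{\cdot,\cdot,\cdot}_C$ in particular unfolds into a large number of terms, so the real difficulty is grouping them so that the post axioms and the centrality relations \eqref{post6}--\eqref{post8} apply cleanly.

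For part (ii), with $A^c$ now a Lie-Yamaguti algebra, I would verify the five representation axioms for $(L,\huaR)$ one at a time. Evaluating \eqref{RLYb}--\eqref{RLY5} on a test element and applying the dictionary $\rho=L$, $\mu=\huaR$, $D=\{\cdot,\cdot,\cdot\}_D$ turns them, in order, into the five defining identities \eqref{post1}--\eqref{post5}, so this amounts to a bookkeeping match between the two lists. The action conditions \eqref{*1}--\eqref{*3} are then read off from \eqref{post6}--\eqref{post8}, which encode exactly the requirements that the images of $L$ and $\huaR$ be central in $(A,\cdot,\langle\cdot,\cdot,\cdot\rangle)$ and annihilate its binary and ternary brackets.

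Finally, part (iii) is immediate once (i) and (ii) are available. By \eqref{sub1}, the relation $[u,v]_C=L(u)v-L(v)u+[u,v]_\h$ is precisely \eqref{chomo1} for $T=\Id$; and by \eqref{sub2} together with $D_{L,\huaR}=\{\cdot,\cdot,\cdot\}_D$, the relation $\Courant{u,v,w}_C=D(u,v)w+\mu(v,w)u-\mu(u,w)v+\Courant{u,v,w}_\h$ is precisely \eqref{chomo2} for $T=\Id$. Hence $\Id\colon(A,\cdot,\langle\cdot,\cdot,\cdot\rangle)\to A^c$ is a relative Rota-Baxter operator of weight $1$ with respect to the action $(L,\huaR)$. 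Equivalently, one may phrase this last step through the graph characterization, observing that $Gr(\Id)$ is the diagonal subalgebra of $A^c\ltimes_{L,\huaR}(A,\cdot,\langle\cdot,\cdot,\cdot\rangle)$.
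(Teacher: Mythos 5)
Your proposal is correct and follows essentially the same route as the paper: a direct verification of the Lie-Yamaguti axioms for $([\cdot,\cdot]_C,\Courant{\cdot,\cdot,\cdot}_C)$ using \eqref{post1}--\eqref{post8}, the computation $D_{L,\huaR}(x,y)z=\{x,y,z\}_D$ (which the paper performs inside part (ii) rather than up front), the term-by-term match of the representation and action axioms with \eqref{post1}--\eqref{post8}, and the observation that (iii) is immediate from \eqref{sub1} and \eqref{sub2}. The only differences are organizational, namely isolating the $D_{L,\huaR}=\{\cdot,\cdot,\cdot\}_D$ identity as a preliminary lemma and the optional graph-characterization phrasing of (iii).
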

\begin{proof}
Obviously, the operation $[\cdot,\cdot]_C$ is skew-symmetric, and $\Courant{\cdot,\cdot,\cdot}_C$ is skew-symmetric with respect to the first two variables. For all $x,y,z,w\in A$, by Eq. \eqref{dbra}, we have that
\begin{eqnarray*}
~ &&[[x,y]_C,z]_C+\Courant{x,y,z}_C+c.p.\\
~ &=&[x,y]_C*z-z*(x*y)+z*(y*x)+\{x,y,z\}_D+\{x,y,z\}-\{y,x,z\}+(xy)z+\langle x,y,z\rangle+c.p.\\
~ &=&0,
\end{eqnarray*}
and by \eqref{post1}, we obtain that
\begin{eqnarray*}
~ &&\Courant{[x,y]_C,z,w}_C+c.p.(x,y,z)\\
~ &=&\{[x,y]_C,z,w\}_D+\{[x,y]_C,z,w\}-\{z,[x,y]_C,w\}+\langle xy,z,w\rangle+c.p.(x,y,z)\\
~ &=&0.
\end{eqnarray*}
Thus, Eqs. \eqref{LY1} and \eqref{LY2} with respect to $([\cdot,\cdot]_C,\Courant{\cdot,\cdot,\cdot}_C)$ hold. Similarly, Eqs. \eqref{LY3} and \eqref{fundamental} with respect to $([\cdot,\cdot]_C,\Courant{\cdot,\cdot,\cdot}_C)$ also hold. Thus $(A,[\cdot,\cdot]_C,\Courant{\cdot,\cdot,\cdot}_C)$ is a \LYA. This gives (i).

For all $x,y,z\in A$, we have
\begin{eqnarray*}
~D_{L,\huaR}(x,y)z&=&\huaR(y,x)z-\huaR(x,y)z+[L(x),L(y)]z-L([x,y]_C)z\\
~ &=&\{z,y,x\}-\{z,x,y\}+(y,x,z)-(x,y,z)-(x,y)*z\\
~ &=&\{x,y,z\}_D.
\end{eqnarray*}
In the sequel, we denote by $\huaL:=D_{L,\huaR}$, and thus $\huaL(x,y)z=\{x,y,z\}_D$. Moreover, for all $x,y,z,w\in A$, we also have that
\begin{eqnarray*}
~ &&\huaR([x,y]_C,z)w-\huaR(x,z)L(y)w+\huaR(y,z)L(x)z\\
~ &=&\{w,[x,y]_C,z\}-\{y*w,x,z\}+\{x*w,y,z\}\\
~ &\stackrel{\eqref{post1}}{=}&0,
\end{eqnarray*}
and
\begin{eqnarray*}
~ &&L(\Courant{x,y,z}_C)w-[\huaL(x,y),L(z)]w\\
~ &=&\Courant{x,y,z}_C*w-\{x,y,z*w\}_D+z*\{x,y,w\}_D\\
~ &\stackrel{\eqref{post3}}{=}&0.
\end{eqnarray*}
The left equalities can be obtained similarly. Thus $(L,\huaR)$ is an action of $(A,[\cdot,\cdot]_C,\Courant{\cdot,\cdot,\cdot}_C)$ on $(A,\cdot,\langle\cdot,\cdot,\cdot\rangle)$. This gives (ii).

(iii) follows from the expressions of $[\cdot,\cdot]_C$ and $\Courant{\cdot,\cdot,\cdot}_C$. This finishes the proof.
\end{proof}

\begin{cor}\label{homosub}
Let $\psi:A\longrightarrow A'$ be a homomorphism from a post-\LYA ~$(A,\cdot_{A},*_{A},\\
\{\cdot,\cdot,\cdot\}_{A},\langle\cdot,\cdot,\cdot\rangle_{A})$ to another post-\LYA ~$(A',\cdot_{A'},*_{A'},\{\cdot,\cdot,\cdot\}_{A'},\langle\cdot,\cdot,\cdot\rangle_{A'})$. Then $\psi$ is also a \LYA ~homomorphism between the sub-adjacent \LYA s from $A^c$ to ${A'}^c$.
\end{cor}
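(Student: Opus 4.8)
The plan is to exploit the fact that the sub-adjacent operations $[\cdot,\cdot]_C$ and $\Courant{\cdot,\cdot,\cdot}_C$ are not new data but are assembled entirely from the four structure operations $\cdot,\ *,\ \{\cdot,\cdot,\cdot\},\ \langle\cdot,\cdot,\cdot\rangle$ that $\psi$ is already assumed, by definition of a post-\LYA ~homomorphism, to preserve. Since $\psi$ is linear, it then suffices to verify that $\psi$ intertwines each of the defining formulas \eqref{sub1}, \eqref{sub2}, and \eqref{dbra} term by term. Theorem \ref{thm:sub}(i) guarantees that both $A^c$ and ${A'}^c$ are genuine \LYA s, so once the two intertwining identities are established we may conclude that $\psi:A^c\to{A'}^c$ is a \LYA ~homomorphism.

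First I would treat the binary bracket. Applying $\psi$ to \eqref{sub1} and using linearity together with $\psi(x*_A y)=\psi(x)*_{A'}\psi(y)$ and $\psi(x\cdot_A y)=\psi(x)\cdot_{A'}\psi(y)$ gives
$$\psi([x,y]_C)=\psi(x)*_{A'}\psi(y)-\psi(y)*_{A'}\psi(x)+\psi(x)\cdot_{A'}\psi(y)=[\psi(x),\psi(y)]_C,$$
so $\psi$ respects the binary brackets of the sub-adjacent \LYA s.

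Next, before handling the ternary bracket, I would record two auxiliary preservation facts as intermediate lemmas. Because the associator $(x,y,z)=(x*y)*z-x*(y*z)$ is built solely from $*$, the homomorphism property for $*$ immediately yields $\psi((x,y,z))=(\psi(x),\psi(y),\psi(z))$. Feeding this, together with preservation of $\{\cdot,\cdot,\cdot\}$ and of $\cdot$, into \eqref{dbra} shows that $\psi$ also intertwines the derived operation $\{\cdot,\cdot,\cdot\}_D$, that is, $\psi(\{x,y,z\}_D)=\{\psi(x),\psi(y),\psi(z)\}_D$. With these in hand, applying $\psi$ to \eqref{sub2} and invoking preservation of $\{\cdot,\cdot,\cdot\}_D$, $\{\cdot,\cdot,\cdot\}$, and $\langle\cdot,\cdot,\cdot\rangle$ gives $\psi(\Courant{x,y,z}_C)=\Courant{\psi(x),\psi(y),\psi(z)}_C$.

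There is no serious obstacle here: the entire argument is bookkeeping that propagates the four given homomorphism identities through the definitions by a single linear expansion in each case. The only point that deserves explicit mention — and the one most easily overlooked — is that the \emph{derived} operations $(\cdot,\cdot,\cdot)$ and $\{\cdot,\cdot,\cdot\}_D$ are themselves preserved; once these two facts are isolated, the two required identities for $[\cdot,\cdot]_C$ and $\Courant{\cdot,\cdot,\cdot}_C$ drop out immediately, completing the proof.
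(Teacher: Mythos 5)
Your proof is correct and is exactly the argument the paper has in mind: the corollary is stated without proof as an immediate consequence of the definitions, since $[\cdot,\cdot]_C$ and $\Courant{\cdot,\cdot,\cdot}_C$ are assembled from the four operations that a post-Lie-Yamaguti homomorphism preserves. Your explicit remark that the associator and the derived bracket $\{\cdot,\cdot,\cdot\}_D$ are themselves preserved is the only nontrivial bookkeeping step, and you handle it correctly.
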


The following theorem demonstrates that a post-\LYA ~structure is the underlying algebraic structure of relative Rota-Baxter operators of nonzero weights.
\begin{thm}\label{underlying}
Let $T:\h\longrightarrow\g$ be a relative Rota-Baxter operator of weight $1$ from a \LYA ~$(\h,[\cdot,\cdot]_\h,\Courant{\cdot,\cdot,\cdot}_\h)$ to another \LYA ~$(\g,[\cdot,\cdot]_\g,\Courant{\cdot,\cdot,\cdot}_\g)$ with respect to an action $(\rho,\mu)$. Then  $(\h,\cdot,*,\{\cdot,\cdot,\cdot\},\langle\cdot,\cdot,\cdot\rangle)$ is a post-\LYA, where operations $\cdot,*,\{\cdot,\cdot,\cdot\}$ and $\langle\cdot,\cdot,\cdot\rangle$ are defined to be for all $u,v,w \in \h$,
$$uv=[u,v]_\h,\quad u*v=\rho(Tu)v, \quad \{u,v,w\}=\mu(Tv,Tw)u,\quad \langle u,v,w\rangle=\Courant{u,v,w}_\h.$$
\end{thm}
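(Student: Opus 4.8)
The plan is to verify each clause of the definition of post-\LYA ~for the operations $(\cdot,*,\{\cdot,\cdot,\cdot\},\langle\cdot,\cdot,\cdot\rangle)$ on $\h$. That $(\h,\cdot,\langle\cdot,\cdot,\cdot\rangle)$ is a \LYA ~is immediate, since $uv=[u,v]_\h$ and $\langle u,v,w\rangle=\Courant{u,v,w}_\h$ just recover the given \LYA ~structure on $\h$. The conceptual heart of the argument is that $T$ becomes a \LYA ~homomorphism onto $\g$ once $\h$ is equipped with the derived brackets \eqref{sub1}--\eqref{dbra}, so I would establish that first.

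I would compute the three derived operations. From \eqref{sub1} one gets $[u,v]_C=\rho(Tu)v-\rho(Tv)u+[u,v]_\h$, so \eqref{chomo1} reads exactly $T[u,v]_C=[Tu,Tv]_\g$. Expanding \eqref{dbra} with $u*v=\rho(Tu)v$, the difference of associators $(v,u,w)-(u,v,w)$ produces $\rho\big(T(\rho(Tv)u-\rho(Tu)v)\big)w+[\rho(Tu),\rho(Tv)]w$; since $\rho(Tv)u-\rho(Tu)v=[u,v]_\h-[u,v]_C$ and $T[u,v]_C=[Tu,Tv]_\g$, the term $\rho\big(T[u,v]_\h\big)w$ arising here cancels against $-(uv)*w$, leaving $\{u,v,w\}_D=\mu(Tv,Tu)w-\mu(Tu,Tv)w+[\rho(Tu),\rho(Tv)]w-\rho([Tu,Tv]_\g)w=D(Tu,Tv)w$ by \eqref{rep}. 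Then \eqref{sub2} gives $\Courant{u,v,w}_C=D(Tu,Tv)w+\mu(Tv,Tw)u-\mu(Tu,Tw)v+\Courant{u,v,w}_\h$, and \eqref{chomo2} becomes exactly $T\Courant{u,v,w}_C=\Courant{Tu,Tv,Tw}_\g$. Hence $T\colon(\h,[\cdot,\cdot]_C,\Courant{\cdot,\cdot,\cdot}_C)\to(\g,[\cdot,\cdot]_\g,\Courant{\cdot,\cdot,\cdot}_\g)$ intertwines both brackets, i.e.\ it is a \LYA ~homomorphism.

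Next I would dispatch the five identities \eqref{post1}--\eqref{post5}. Set $L(u):=\rho(Tu)$ and $\huaR(u,v):=\mu(Tu,Tv)$, so that $u*w=L(u)w$, $\{w,u,v\}=\huaR(u,v)w$, and the computation above identifies $D_{L,\huaR}(u,v)=D(Tu,Tv)$, whence $\{u,v,w\}_D=D_{L,\huaR}(u,v)w$. Because $(\h;\rho,\mu)$ is a representation of $\g$ and $T$ is a \LYA ~homomorphism, the pair $(L,\huaR)$ is the pullback representation of the sub-adjacent \LYA ~$(\h,[\cdot,\cdot]_C,\Courant{\cdot,\cdot,\cdot}_C)$ on the vector space $\h$: each of \eqref{RLYb}--\eqref{RLY5} for $(L,\huaR)$ follows from the corresponding axiom for $(\rho,\mu)$ by replacing arguments with their $T$-images and using $T[\cdot,\cdot]_C=[T\cdot,T\cdot]_\g$, $T\Courant{\cdot,\cdot,\cdot}_C=\Courant{T\cdot,T\cdot,T\cdot}_\g$ on the bracket terms. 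Rewriting \eqref{RLYb}, \eqref{RLYd}, \eqref{RLYe}, \eqref{RYT4}, \eqref{RLY5} for $(L,\huaR)$ in terms of $*$, $\{\cdot,\cdot,\cdot\}$ and $\{\cdot,\cdot,\cdot\}_D$ reproduces \eqref{post1}, \eqref{post2}, \eqref{post3}, \eqref{post4}, \eqref{post5} respectively, the bookkeeping being routine once one tracks which slots carry $T$.

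Finally, \eqref{post6}--\eqref{post8} are the degeneracy conditions supplied by the action axioms. Since $u*v=\rho(Tu)v$ and $\{w,u,v\}=\mu(Tu,Tv)w$ lie in $C(\h)$ by \eqref{*1}, the element $u*v$ is annihilated by $[\cdot,\cdot]_\h$ and by $\Courant{\cdot,\cdot,\cdot}_\h$ in the relevant slots, giving \eqref{post6}; \eqref{*2} gives $\rho(Tx)[y,z]_\h=0$ and $\mu(Tz,Tw)[x,y]_\h=0$, hence \eqref{post7}; and \eqref{*3} gives $\rho(Tx)\Courant{z,w,t}_\h=0$ and $\mu(Tx,Ty)\Courant{z,w,t}_\h=0$, hence \eqref{post8}. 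The main obstacle is the second step: confirming that \eqref{dbra} genuinely collapses to $D(Tu,Tv)w$ (so that $T$ really is a homomorphism) and then matching each representation axiom to the correct \eqref{post1}--\eqref{post5}. The centrality clause needed for \eqref{post6}, namely that $\rho(Tx)y$ is killed by the triple bracket in both the first and the last slot, is the most delicate point and is where the precise form of $C(\h)$ must be used.
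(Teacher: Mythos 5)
Your proposal is correct and follows essentially the same route as the paper's proof: both hinge on the computation that \eqref{dbra} collapses to $\{u,v,w\}_D=D(Tu,Tv)w$ (using \eqref{chomo1} to absorb the $-(uv)*w$ term into the associator difference), after which \eqref{post1}--\eqref{post5} are obtained by rewriting the representation axioms \eqref{RLYb}--\eqref{RLY5} evaluated on $T$-images, and \eqref{post6}--\eqref{post8} follow from the action conditions \eqref{*1}--\eqref{*3}. Your framing via the intertwining identities $T[u,v]_C=[Tu,Tv]_\g$ and $T\Courant{u,v,w}_C=\Courant{Tu,Tv,Tw}_\g$ is just a slightly more explicit statement of the substitution the paper performs term by term.
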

\begin{proof}
For all $u,v,w \in \h$, we have that
\begin{eqnarray*}
D(Tu,Tv)w&=&\mu(Tv,Tu)w-\mu(Tu,Tv)w+[\rho(Tu),\rho(Tv)]w-\rho([Tu,Tv])w\\
~ &=&\{w,v,u\}-\{w,u,v\}+(v,u,w)-(u,v,w)-(uv)*w\\
~ &=&\{u,v,w\}_D.
\end{eqnarray*}
Moreover, for all $u,v,w,s,t\in \h$, we also have that
\begin{eqnarray*}
~ &&\mu([Tu,Tv],Tt)w-\mu(Tu,Tt)\rho(Tv)w+\mu(Tv,Tt)\rho(Tu)w\\
~ &=&\{w,[u,v]_C,t\}-\{v*w,u,t\}+\{u*w,v,t\},\\
~&&\\
~ &&\mu(Tv,[Tw,Tt])u-\rho(Tw)\mu(Tv,Tt)u+\rho(Tt)\mu(Tv,Tw)u\\
~ &=&\{u,v,[w,t]_C\}-w*\{u,v,t\}+t*\{u,v,w\},\\
~ &&\\
~ &&\rho(\Courant{Tu,Tv,Tw})t-[D(Tu,Tv),\rho(Tw)]t\\
~ &=&\Courant{u,v,w}_C*t-\{u,v,w*t\}+w*\{u,v,t\},\\
\\
~ &&\mu(Tt,Ts)\mu(Tv,Tw)u-\mu(Tw,Ts)\mu(Tv,Tt)u-\mu(Tv,\Courant{Tw,Tt,Ts})u\\
~ &&~~+D(Tw,Tt)\mu(Tv,Ts)u\\
~ &=&\{\{u,v,w\},t,s\}-\{\{u,v,t\},w,s\}+\{w,t\{u,v,s\}\}_D-\{u,v,\Courant{w,t,s}_C\},\\
\\
~ &&[D(Tu,Tv),\mu(Tt,Ts)]w-\mu(\Courant{Tu,Tv,Tt},Ts)w-\mu(Tt,\Courant{Tu,Tv,Ts})w\\
~ &=&\{u,v,\{w,t,s\}\}_D-\{\{u,v,w\}_D,t,s\}-\{w,\Courant{u,v,t}_C,s\}-\{w,t,\Courant{u,v,s}_C\}.
\end{eqnarray*}
Since $(\rho,\mu)$ is a representation of $\g$ on $\h$, Eqs. \eqref{post1}-\eqref{post5} with respect to $(\cdot,*,\{\cdot,\cdot,\cdot\},\langle\cdot,\cdot,\cdot\rangle)$ hold. Moreover, since $(\rho,\mu)$ is also an action, it is easy to see that Eqs. \eqref{post6}-\eqref{post8} with respect to $(\cdot,*,\{\cdot,\cdot,\cdot\},\langle\cdot,\cdot,\cdot\rangle)$ hold. Besides, the triple $(\h,\cdot,\langle\cdot,\cdot,\cdot\rangle)$ is obviously a \LYA. Thus $(\h,\cdot,*,\{\cdot,\cdot,\cdot\},\langle\cdot,\cdot,\cdot\rangle)$ is a post-\LYA. This finishes the proof.
\end{proof}

Combining Theorem \ref{thm:sub} and Theorem \ref{underlying}, we obtain the following corollary directly.
\begin{cor}\label{cor:des}
Let $T:\h\longrightarrow\g$ be a relative Rota-Baxter operator of weight $1$ from a \LYA ~$(\h,[\cdot,\cdot]_\h,\Courant{\cdot,\cdot,\cdot}_\h)$ to another \LYA ~$(\g,[\cdot,\cdot]_\g,\Courant{\cdot,\cdot,\cdot}_\g)$ with respect to an action $(\rho,\mu)$. Then $(\h,[\cdot,\cdot]_T,\Courant{\cdot,\cdot,\cdot}_T)$ is a \LYA, which is called the {\bf descent \LYA}, where $([\cdot,\cdot]_T,\Courant{\cdot,\cdot,\cdot}_T)$ is defined to be
\begin{eqnarray*}
[u,v]_T&=&\rho(Tu)v-\rho(Tv)u+[u,v]_\h,\\
~\Courant{u,v,w}_T&=&D(Tu,Tv)w+\mu(Tv,Tw)u-\mu(Tu,Tw)v+\Courant{u,v,w}_\h,\quad\forall u,v,w\in \h.
\end{eqnarray*}
Thus $T$ is a \LYA ~homomorphism from the descent \LYA ~$(\h,[\cdot,\cdot]_T,\\
\Courant{\cdot,\cdot,\cdot}_T)$ to the \LYA ~$(\g,[\cdot,\cdot]_\g,\Courant{\cdot,\cdot,\cdot}_\g)$.
\end{cor}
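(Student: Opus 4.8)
The plan is to deduce the corollary by composing the two structural theorems just established, rather than checking the Lie-Yamaguti axioms for $([\cdot,\cdot]_T,\Courant{\cdot,\cdot,\cdot}_T)$ directly. First I would invoke Theorem \ref{underlying} to equip $\h$ with the post-\LYA~structure given there, namely $uv=[u,v]_\h$, $u*v=\rho(Tu)v$, $\{u,v,w\}=\mu(Tv,Tw)u$ and $\langle u,v,w\rangle=\Courant{u,v,w}_\h$. Applying Theorem \ref{thm:sub}(i) to this post-\LYA~then immediately produces a \LYA~structure on $\h$, the sub-adjacent \LYA~$A^c=(\h,[\cdot,\cdot]_C,\Courant{\cdot,\cdot,\cdot}_C)$. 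So the only genuine task is to identify these sub-adjacent brackets with the descent brackets in the statement.

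For the binary bracket this is immediate from \eqref{sub1}: $[u,v]_C=u*v-v*u+uv=\rho(Tu)v-\rho(Tv)u+[u,v]_\h=[u,v]_T$. For the ternary bracket I would reuse the computation already performed in the proof of Theorem \ref{underlying}, which establishes that under these identifications $\{u,v,w\}_D=D(Tu,Tv)w$. Substituting this, together with $\{u,v,w\}=\mu(Tv,Tw)u$, $\{v,u,w\}=\mu(Tu,Tw)v$ and $\langle u,v,w\rangle=\Courant{u,v,w}_\h$, into the defining formula \eqref{sub2} yields $\Courant{u,v,w}_C=D(Tu,Tv)w+\mu(Tv,Tw)u-\mu(Tu,Tw)v+\Courant{u,v,w}_\h=\Courant{u,v,w}_T$. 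Hence $([\cdot,\cdot]_T,\Courant{\cdot,\cdot,\cdot}_T)$ coincides with the sub-adjacent \LYA~structure and is therefore a \LYA.

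Finally, the homomorphism assertion requires no further computation: the identities $T([u,v]_T)=[Tu,Tv]_\g$ and $T(\Courant{u,v,w}_T)=\Courant{Tu,Tv,Tw}_\g$ are, after the identification of $([\cdot,\cdot]_T,\Courant{\cdot,\cdot,\cdot}_T)$ just made, exactly the defining equations \eqref{chomo1} and \eqref{chomo2} of a relative Rota-Baxter operator of weight $1$. I do not anticipate any real obstacle, since all the analytic content has been absorbed into Theorems \ref{thm:sub} and \ref{underlying} and what remains is bookkeeping. The one point demanding care is matching the indices in \eqref{sub2} so that the combination $\{u,v,w\}-\{v,u,w\}$ produces precisely $\mu(Tv,Tw)u-\mu(Tu,Tw)v$; a single transposition error there would misalign the descent bracket with the relative Rota-Baxter defining equation and break the homomorphism identification.
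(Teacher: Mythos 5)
Your proposal is correct and follows exactly the route the paper intends: the corollary is stated as a direct combination of Theorem \ref{underlying} (which endows $\h$ with the post-Lie-Yamaguti structure) and Theorem \ref{thm:sub} (whose sub-adjacent brackets \eqref{sub1}--\eqref{sub2}, using the identity $\{u,v,w\}_D=D(Tu,Tv)w$ established in the proof of Theorem \ref{underlying}, reduce to $[\cdot,\cdot]_T$ and $\Courant{\cdot,\cdot,\cdot}_T$), with the homomorphism claim being a restatement of \eqref{chomo1}--\eqref{chomo2}. Your filled-in identification of the brackets is accurate, so nothing further is needed.
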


\begin{pro}\label{homopost}
Let $T$ and $T'$ be two relative Rota-Bater operators of weight $1$ from a \LYA ~$(\h,[\cdot,\cdot]_\h,\Courant{\cdot,\cdot,\cdot}_\h)$ to another \LYA ~$(\g,[\cdot,\cdot]_\g,\Courant{\cdot,\cdot,\cdot}_\g)$ with respect to an action $(\rho,\mu)$. Let $(\h,\cdot_{T},*_{T},\{\cdot,\cdot,\cdot\}_{T},\langle\cdot,\cdot,\cdot\rangle_{T})$ and $(\h,\cdot_{T'},*_{T'},\{\cdot,\cdot,\cdot\}_{T'},\langle\cdot,\cdot,\cdot\rangle_{T'})$ be the induced post-\LYA s, and $(\psi_\g,\psi_\h)$ a homomorphism from $T'$ to $T$. Then $\psi_\h$ is a homomorphism from the post-\LYA ~$(\h,\cdot_{T'},*_{T'},\{\cdot,\cdot,\cdot\}_{T'},\langle\cdot,\cdot,\cdot\rangle_{T'})$ to the post-\LYA ~$(\h,\cdot_{T},*_{T},\{\cdot,\cdot,\cdot\}_{T},\langle\cdot,\cdot,\cdot\rangle_{T})$.
\end{pro}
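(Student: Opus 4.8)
The plan is to unwind the definition of a homomorphism of post-\LYA s and verify each required compatibility directly from the three defining identities of a homomorphism $(\psi_\g,\psi_\h)$ from $T'$ to $T$, namely the intertwining relation \eqref{crho1} and the equivariance relations \eqref{crosshomo1} and \eqref{crosshomo2}. By the definition of a post-\LYA ~homomorphism, I must check three things: that $\psi_\h$ is a \LYA ~homomorphism for the operations $(\cdot,\langle\cdot,\cdot,\cdot\rangle)$, that $\psi_\h(x*_{T'}y)=\psi_\h(x)*_{T}\psi_\h(y)$, and that $\psi_\h(\{x,y,z\}_{T'})=\{\psi_\h(x),\psi_\h(y),\psi_\h(z)\}_{T}$.

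The first of these is immediate. By Theorem \ref{underlying}, the binary operation $\cdot$ and the ternary operation $\langle\cdot,\cdot,\cdot\rangle$ of the induced post-\LYA s are the intrinsic brackets $uv=[u,v]_\h$ and $\langle u,v,w\rangle=\Courant{u,v,w}_\h$, and in particular they do not depend on the operator used. Hence these operations coincide for the structures induced by $T$ and $T'$, and the requirement that $\psi_\h$ intertwine them is exactly the hypothesis that $\psi_\h$ is a \LYA ~homomorphism of $\h$, which is already part of the data of a homomorphism from $T'$ to $T$.

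It remains to treat the two operations that genuinely depend on the operator. For the operation $*$, I compute $\psi_\h(x*_{T'}y)=\psi_\h(\rho(T'x)y)$; applying \eqref{crosshomo1} with $T'x\in\g$ in the first slot pushes $\psi_\h$ through $\rho$ to give $\rho(\psi_\g(T'x))\psi_\h(y)$, and then \eqref{crho1} rewrites $\psi_\g(T'x)=T(\psi_\h(x))$, so the expression becomes $\rho(T\psi_\h(x))\psi_\h(y)=\psi_\h(x)*_{T}\psi_\h(y)$, as desired. For the ternary operation the same two steps apply: starting from $\psi_\h(\{x,y,z\}_{T'})=\psi_\h(\mu(T'y,T'z)x)$, relation \eqref{crosshomo2} yields $\mu(\psi_\g(T'y),\psi_\g(T'z))\psi_\h(x)$, and \eqref{crho1} converts each $\psi_\g(T'\cdot)$ into $T(\psi_\h(\cdot))$, producing precisely $\mu(T\psi_\h(y),T\psi_\h(z))\psi_\h(x)=\{\psi_\h(x),\psi_\h(y),\psi_\h(z)\}_{T}$.

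I do not anticipate a genuine obstacle: the whole content is the interplay between the intertwining identity \eqref{crho1} and the equivariance identities \eqref{crosshomo1}--\eqref{crosshomo2}. The only point deserving care is the order of substitution, since $*$ and $\{\cdot,\cdot,\cdot\}$ are defined through $\rho$ and $\mu$ \emph{precomposed} with $T$ (resp. $T'$): one must first move $\psi_\h$ across $\rho$ or $\mu$ by equivariance, and only afterwards apply $\psi_\g\circ T'=T\circ\psi_\h$. Note also that one need not invoke the auxiliary identity \eqref{crosshomo3} for $D$, because the operations defining the post-\LYA ~structure are expressed purely in terms of $\rho$, $\mu$, and the intrinsic brackets of $\h$.
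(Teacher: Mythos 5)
Your proposal is correct and follows essentially the same route as the paper's own proof: both handle $*$ and $\{\cdot,\cdot,\cdot\}$ by first applying the equivariance relations \eqref{crosshomo1}--\eqref{crosshomo2} and then the intertwining relation \eqref{crho1}, and both dispose of the intrinsic operations $\cdot$ and $\langle\cdot,\cdot,\cdot\rangle$ by noting they are the fixed brackets of $\h$ preserved by the hypothesis that $\psi_\h$ is a Lie-Yamaguti algebra homomorphism. No gaps.
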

\begin{proof}
For all $u,v,w\in \h$, we have
\begin{eqnarray*}
\psi_\h(\{u,v,w\}_{T'})&=&\psi_\h\Big(\mu(T'v,T'w)u\Big)\\
~ &=&\mu\Big(\psi_\g(T'v),\psi_\g(T'w)\Big)\Big(\psi_\h(w)\Big)\\
~ &=&\mu\Big(T\psi_\h(v),T\psi_\h(w)\Big)\Big(\psi_\h(w)\Big)\\
~ &=&\{\psi_\h(u),\psi_\h(v),\psi_\h(w)\}_{T},
\end{eqnarray*}
and
\begin{eqnarray*}
\psi_\h\Big(u*_{T'}v\Big)&=&\psi_\h\Big(\rho(T'u)v\Big)\\
~ &=&\rho(\psi_\g(T'u))\Big(\psi_\h(v)\Big)\\
~ &=&\rho(T\psi_\h(u))\Big(\psi_\h(v)\Big)\\
~ &=&\psi_\h(u)*_T\psi_\h(v)
\end{eqnarray*}
Moreover, since $\psi_\h$ is a homomorphism from $\h$ to $\h$, then we also have that $\psi_\h(u\cdot_{T'}v)=\psi_\h(u)\cdot_{T}\psi_\h(v)$ and $\psi_\h(\langle u,v,w\rangle_{T'})=\langle\psi_\h(u),\psi_\h(v),\psi_\h(w)\rangle_T$ for all $u,v\in \h$. This finishes the  proof.
\end{proof}

\section{Cohomology and deformations of relative Rota-Baxter operators of nonzero weights on Lie-Yamaguti algebras}
\subsection{Cohomology of relative Rota-Baxter operators of nonzero weights on Lie-Yamaguti algebras}
In this subsection, we build the cohomology of relative Rota-Baxter operators of weight $1$ on Lie-Yamaguti algebras and show that the cohomology we build enjoys some funtorial properties.

\subsubsection{Construction of cohomology of relative Rota-Baxter operators of nonzero weights}
Let $(\g,[\cdot,\cdot]_\g,\Courant{\cdot,\cdot}_\g)$ and $(\h,[\cdot,\cdot]_\h,\Courant{\cdot,\cdot}_\h)$ be two Lie-Yamaguti algebras, and $T:\h\longrightarrow\g$ a relative Rota-Baxter operators of weight $1$ with respect to an action $(\rho,\mu)$. Define $\rho_T:\h\longrightarrow\gl(\g)$ and $\mu_T:\otimes^2\h\longrightarrow\gl(\g)$ to be
\begin{eqnarray}
\rho_T(u)(x)&:=&[Tu,x]_\g+T\Big(\rho(x)u\Big),\label{newrep1}\\
~\mu_T(u,v)(x)&:=&\Courant{x,Tu,Tv}_\g-T\Big(D(x,Tu)v-\mu(x,Tv)u\Big), \quad \forall u,v\in \h,~x\in \g.\label{newrep2}
\end{eqnarray}
respectively.

\begin{lem}
With the assumptions above, define $D_T:\wedge^2\h\longrightarrow\gl(\g)$ to be
\begin{eqnarray}
~D_T(u,v)(x)&:=&\Courant{Tu,Tv,x}_\g-T\Big(\mu(Tv,x)u-\mu(Tu,x)v\Big),\quad\forall u,v\in \h,~x\in \g.\label{newrep3}
\end{eqnarray}
Then we have $D_T=D_{\rho_T,\mu_T}$.
\end{lem}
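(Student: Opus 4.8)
The goal is to verify the identity $D_T = D_{\rho_T,\mu_T}$, where the right-hand side is computed from the formula \eqref{rep} applied to the maps $\rho_T$ and $\mu_T$ defined in \eqref{newrep1}--\eqref{newrep2}. The plan is a direct computation: unwind the definition
$$
D_{\rho_T,\mu_T}(u,v)(x)=\mu_T(v,u)(x)-\mu_T(u,v)(x)+[\rho_T(u),\rho_T(v)](x)-\rho_T([u,v]_T)(x),
$$
where crucially the bracket $[u,v]_T$ appearing in the last term is the bracket of the \emph{descent} \LYA\ structure on $\h$ (Corollary \ref{cor:des}), namely $[u,v]_T=\rho(Tu)v-\rho(Tv)u+[u,v]_\h$, since $\rho_T,\mu_T$ must be a representation of the sub-adjacent/descent algebra rather than of $\h$ itself. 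I would substitute each of the four terms using \eqref{newrep1}--\eqref{newrep2} and then simplify, aiming to land on the expression \eqref{newrep3} for $D_T(u,v)(x)$.

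First I would expand $\mu_T(v,u)(x)-\mu_T(u,v)(x)$, which gives $\Courant{x,Tv,Tu}_\g-\Courant{x,Tu,Tv}_\g$ together with the $T$-corrected terms involving $D$ and $\mu$; here the skew-symmetry $\Courant{x,Tv,Tu}_\g=-\Courant{x,Tu,Tv}_\g$ in the first two slots will combine these into a single curvature-type term. Next I would expand the commutator $[\rho_T(u),\rho_T(v)](x)$ by applying \eqref{newrep1} twice; this produces nested Lie brackets $[Tu,[Tv,x]_\g]_\g-[Tv,[Tu,x]_\g]_\g$, mixed terms of the form $[Tu,T(\rho(x)v)]_\g$ and $T(\rho([Tv,x]_\g)u)$, and a doubly-$T$ term $T(\rho(T(\rho(x)v))u)-T(\rho(T(\rho(x)u))v)$. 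Finally I would expand $\rho_T([u,v]_T)(x)$ using the descent bracket, producing $[T[u,v]_T,x]_\g+T(\rho(x)[u,v]_T)$, and then replace $T[u,v]_T$ by $[Tu,Tv]_\g$ using the relative Rota-Baxter identity \eqref{chomo1}.

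The main obstacle, as usual in such verifications, is the bookkeeping of the $T$-corrected terms: after the substitutions there will be many terms lying in $\g$ that come purely from the semidirect structure and many terms that are images under $T$ of elements of $\h$. I expect these two families to cancel separately. The $\g$-valued part should collapse to $\Courant{Tu,Tv,x}_\g$ by the Jacobi-type relation \eqref{RLYe} (equivalently the adjoint-representation identity $\rho(\Courant{Tu,Tv,Tw})=[D(Tu,Tv),\rho(Tw)]$ read in $\g$), reconciling the nested brackets with the ternary bracket. The $T$-valued part is where the relation \eqref{rep} defining $D$ on the action $(\rho,\mu)$, together with the representation axioms \eqref{RLYb}--\eqref{RLYd}, must be invoked to show that the accumulated correction equals exactly $-T(\mu(Tv,x)u-\mu(Tu,x)v)$. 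The key leverage point is the Rota-Baxter identity \eqref{chomo1}, which converts $T[u,v]_T$ into the honest bracket $[Tu,Tv]_\g$ and thereby allows the purely-$\g$ terms to close up; without it the nested-bracket terms would not simplify. Since this is entirely a matter of matching terms, I would organize the computation by collecting the coefficient of each structural type ($\g$-brackets, single-$T$, double-$T$) and checking the cancellations type by type, rather than writing one long chain of equalities.
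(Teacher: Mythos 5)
Your overall route is exactly the paper's: expand $D_{\rho_T,\mu_T}(u,v)(x)$ via \eqref{rep} with the descent bracket $[u,v]_T$, substitute \eqref{newrep1}--\eqref{newrep2}, use \eqref{chomo1} to rewrite $T[u,v]_T$ as $[Tu,Tv]_\g$ and to expand the mixed terms $[Tu,T(\rho(x)v)]_\g$, and then treat the $\g$-valued and $T$-valued families of terms separately. That matches the paper's proof step for step.

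However, one step as written would fail. The identity $\Courant{x,Tv,Tu}_\g=-\Courant{x,Tu,Tv}_\g$ is false: the ternary bracket of a Lie-Yamaguti algebra is skew only in its \emph{first two} arguments, and these two terms differ in the second and third. If you actually merged them ``into a single curvature-type term'' at that stage you would get $\pm 2\Courant{x,Tu,Tv}_\g$ and the computation would not close. The correct mechanism (the one the paper uses) is to keep the difference $\Courant{x,Tv,Tu}_\g-\Courant{x,Tu,Tv}_\g$ intact and observe that together with $[Tu,[Tv,x]_\g]_\g-[Tv,[Tu,x]_\g]_\g-[[Tu,Tv]_\g,x]_\g$ it is precisely $D_{\ad,\frkR}(Tu,Tv)x=\frkL_{Tu,Tv}x$, which equals $\Courant{Tu,Tv,x}_\g$ by \eqref{LY1} (see Example \ref{ad}); this is the Jacobi-type relation you want, not \eqref{RLYe}, which is a representation axiom rather than an axiom of $\g$ itself. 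Relatedly, for the $T$-valued family the ingredients are \eqref{rep} together with the action conditions \eqref{*1}--\eqref{*2} (needed to kill $[u,\rho(x)v]_\h$ and $\rho(x)[u,v]_\h$), rather than the representation axioms \eqref{RLYb}--\eqref{RLYd}; with those in hand the remaining terms do cancel down to $-T\big(\mu(Tv,x)u-\mu(Tu,x)v\big)$ as you predict.
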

\begin{proof}
For all $u,v\in \h,~x\in \g$, we have
\begin{eqnarray*}
~ D_{\rho_T,\mu_T}(u,v)(x)&\stackrel{\eqref{rep}}{=}&\mu_T(v,u)(x)-\mu_T(u,v)(x)+[\rho_T(u),\rho_T(v)](x)-\rho_T([u,v]_T)(x)\\
~ &\stackrel{\eqref{newrep1},\eqref{newrep2}}{=}&\Courant{x,Tv,Tu}_\g-T\Big(D(x,Tv)u-\mu(x,Tu)v\Big)\\
~ &&-\Courant{x,Tu,Tv}_\g+T\Big(D(x,Tu)v-\mu(x,Tv)u\Big)\\
~ &&+[Tu,[Tv,x]_\g]_\g+T\Big(\rho([Tv,x]_\g)u\Big)\\
~ &&+[Tu,T\Big(\rho(x)v\Big)]_\g+T\Big(\rho(T(\rho(x)v))u\Big)\\
~ &&-[Tv,[Tu,x]_\g]_\g-T\Big(\rho([Tu,x]_\g)v\Big)\\
~ &&-[Tv,T\Big(\rho(x)u\Big)]_\g-T\Big(\rho(T(\rho(x)u))v\Big)\\
~ &&-[T[u,v]_T,x]_\g-T\Big(\rho(x)[u,v]_T\Big)\\
~ &\stackrel{\eqref{chomo1}}=&\Courant{x,Tv,Tu}_\g-T\Big(D(x,Tv)u-\mu(x,Tu)v\Big)\\
~ &&-\Courant{x,Tu,Tv}_\g+T\Big(D(x,Tu)v-\mu(x,Tv)u\Big)\\
~ &&+[Tu,[Tv,x]_\g]_\g+T\Big(\rho([Tv,x]_\g)u\Big)\\
~ &&+T\Big(\rho(Tu)\rho(x)v-\rho\big(T(\rho(x)v)u\big)+[u,\rho(x)v]_\h\Big)+T\Big(\rho(T(\rho(x)v))u\Big)\\
~ &&-[Tv,[Tu,x]_\g]_\g-T\Big(\rho([Tu,x]_\g)v\Big)\\
~ &&-T\Big(\rho(Tv)\rho(x)u-\rho\big(T(\rho(x)u)v\big)+[v,\rho(x)u]_\h\Big)-T\Big(\rho(T(\rho(x)u))v\Big)\\
~ &&-[[Tu,Tv]_\g,x]_\g-T\Big(\rho(x)\big(\rho(Tu)v-\rho(Tv)u+[u,v]_\h\big)\Big)\\
~ &\stackrel{\eqref{LY1},\eqref{*1}}{=}&\Courant{Tu,Tv,x}_\g-T\Big(\mu(Tv,x)u-\mu(Tu,x)v\Big)\\
~ &=&D_T(u,v)(x).
\end{eqnarray*}
This completes the proof.
\end{proof}

\begin{pro}\label{newrepre}
With the assumptions above, then $(\g;\rho_T,\mu_T)$ is a representation of $(\h,[\cdot,\cdot]_T,\Courant{\cdot,\cdot,\cdot}_T)$, where $\rho_T,~\mu_T$, and $D_T$ are given by \eqref{newrep1}-\eqref{newrep3} respectively.
\end{pro}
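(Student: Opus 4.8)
The plan is to verify directly the five defining identities \eqref{RLYb}--\eqref{RLY5} of Definition \ref{defi:representation} for the pair $(\rho_T,\mu_T)$, now read with respect to the \emph{descent} \LYA ~structure $(\h,[\cdot,\cdot]_T,\Courant{\cdot,\cdot,\cdot}_T)$ produced in Corollary \ref{cor:des}. The preceding Lemma already supplies $D_T=D_{\rho_T,\mu_T}$, so the auxiliary map occurring in those identities may be taken to be $D_T$ as defined in \eqref{newrep3}, and no separate check of formula \eqref{rep} is needed. Thus only the five bracket identities remain, and the whole argument is a bounded sequence of direct expansions.

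For each identity I would substitute \eqref{newrep1}--\eqref{newrep3} and evaluate both sides on an arbitrary $x\in\g$. Every resulting expression then splits into two families of terms: genuine $\g$-brackets among the images $Tu,Tv,\dots$ and the argument $x$, and correction terms of the form $T(\cdots)$ whose argument lies in $\h$. The $\g$-bracket family is reorganized by means of the relative Rota-Baxter identities \eqref{chomo1} and \eqref{chomo2} (equivalently, by the fact from Corollary \ref{cor:des} that $T$ is a \LYA ~homomorphism from the descent algebra, so that $[Tu,Tv]_\g=T[u,v]_T$ and $\Courant{Tu,Tv,Tw}_\g=T\Courant{u,v,w}_T$), and is then closed using the Lie-Yamaguti axioms \eqref{LY1}--\eqref{fundamental} in $\g$ together with the representation axioms \eqref{RLYb}--\eqref{RLY5} for $(\rho,\mu)$ read inside $\g$. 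The $T(\cdots)$ family is expanded by those same representation axioms for $(\rho,\mu)$ acting on $\h$, after which the surviving cross terms are annihilated by the action conditions \eqref{*1}--\eqref{*3} --- exactly the mechanism already used in the Lemma above, where \eqref{*1} collapses several $T(\cdots)$ contributions.

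I expect the main obstacle to be the two identities quadratic in $\mu_T$, namely \eqref{RYT4} and \eqref{RLY5}. There each composite $\mu_T(\cdot,\cdot)\mu_T(\cdot,\cdot)$, upon inserting \eqref{newrep2}, generates nested brackets such as $\Courant{\Courant{x,Tu,Tv}_\g,Tw,Ts}_\g$ together with a large number of $T(\cdots)$ corrections; disentangling them forces repeated application of the fundamental identity \eqref{fundamental} in $\g$ and of \eqref{chomo2}, while one must keep precise track of which terms are killed by \eqref{*1}--\eqref{*3}. The crux is to maintain the separation between the purely $\g$-valued terms, which cancel by the $\g$-axioms, and the $T$-valued terms, which cancel by the $\h$-axioms; once this bookkeeping is kept clean, each of the five identities reduces to the corresponding identity for the representation $(\rho,\mu)$ and closes.
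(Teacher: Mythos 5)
Your proposal is correct and follows essentially the same route as the paper: direct substitution of \eqref{newrep1}--\eqref{newrep3} into each representation axiom, rewriting $T[u,v]_T$ and $T\Courant{u,v,w}_T$ via \eqref{chomo1}--\eqref{chomo2}, and then cancelling the $\g$-bracket terms by the axioms for $(\rho,\mu)$ while the residual $T(\cdots)$ cross terms are killed by the action conditions \eqref{*1}--\eqref{*3}. The paper carries this out explicitly only for \eqref{RLYb} and \eqref{RLYe} and omits the remaining checks (including the quadratic ones you rightly flag as the heaviest), so your outline matches and, if executed, would actually be more complete.
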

\begin{proof}
For all $u,v,w\in \h,~x\in \g$, we have
\begin{eqnarray*}
~ &&\mu_T([u,v]_T,w)x-\mu_T(u,w)\rho_T(v)x+\mu_T(v,w)\rho_T(u)x\\
~ &\stackrel{\eqref{newrep1},\eqref{newrep2}}{=}&\Courant{x,T[u,v]_T,Tw}_\g-T\Big(D(x,T[u,v]_T)w-\mu(x,Tw)[u,v]_T\Big)\\
~ &&-\Courant{[Tv,x]_\g,Tu,Tw}_\g+T\Big(D([Tv,x]_\g,Tu)w-\mu([Tv,x]_\g,Tw)u\Big)\\
~ &&-\Courant{T\Big(\rho(x)v\Big),Tu,Tw}_\g+T\Big(D(T(\rho(x)v),Tu)w-\mu(T(\rho(x)v),Tw)u\Big)\\
~ &&+\Courant{[Tu,x]_\g,Tv,Tw}_\g-T\Big(D([Tu,x]_\g,Tv)w-\mu([Tu,x]_\g,Tw)v\Big)\\
~ &&+\Courant{T\Big(\rho(x)u\Big),Tv,Tw}_\g-T\Big(D(T(\rho(x)u),Tv)w-\mu(T(\rho(x)u),Tw)v\Big)\\
~ &\stackrel{\eqref{chomo1}}{=}&\Courant{x,[Tu,Tv]_\g,Tw}_\g-T\Big(D(x,[Tu,Tv]_\g)w-\mu(x,Tw)\big(\rho(Tu)v-\rho(Tv)u+[u,v]_\h\big)\Big)\\
~ &&-\Courant{[Tv,x]_\g,Tu,Tw}_\g+T\Big(D([Tv,x]_\g,Tu)w-\mu([Tv,x]_\g,Tw)u\Big)\\
~ &&-T\Big(D(T(\rho(x)v),Tu)w+\mu(Tu,Tw)\rho(x)v-\mu(T(\rho(x)v),Tw)u+\Courant{\rho(x)v,u,w}_\h\Big)\\
~ &&+T\Big(D(T(\rho(x)v),Tu)w-\mu(T(\rho(x)v),Tw)u\Big)\\
~ &&+\Courant{[Tu,x]_\g,Tv,Tw}_\g-T\Big(D([Tu,x]_\g,Tv)w-\mu([Tu,x]_\g,Tw)v\Big)\\
~ &&+T\Big(D(T(\rho(x)u),Tv)w+\mu(Tv,Tw)\rho(x)u-\mu(T(\rho(x)u),Tw)v+\Courant{\rho(x)u,v,w}_\h\Big)\\
~ &&-T\Big(D(T(\rho(x)u),Tv)w-\mu(T(\rho(x)u),Tw)v\Big)\\
~ &\stackrel{\eqref{RLYb},\eqref{*1},\eqref{*2}}{=}&0,
\end{eqnarray*}
and
\begin{eqnarray*}
~ &&\rho_T(\Courant{u,v,w}_T)(x)-[D_T(u,v),\rho_T(w)](x)\\
~ &\stackrel{\eqref{newrep1},\eqref{newrep3}}{=}&[T\Courant{u,v,w}_T,x]_\g+T\Big(\rho(x)\Courant{u,v,w}_T\Big)\\
~ &&-\Courant{Tu,Tv,[Tw,x]_\g}_\g+T\Big(\mu(Tv,[Tw,x]_\g)u-\mu(Tu,[Tw,x]_\g)v\Big)\\
~ &&-\Courant{Tu,Tv,T\big(\rho(x)w\big)}_\g+T\Big(\mu(Tv,T\big(\rho(x)w\big))u-\mu(Tu,T\big(\rho(x)w\big))v\Big)\\
~ &&+[Tw,\Courant{Tu,Tv,x}_\g]_\g+T\Big(\rho(\Courant{Tu,Tv,x}_\g)w\Big)\\
~ &&-[Tw,T\big(\mu(Tv,x)u\big)]_\g-T\Big(\rho(T\big(\mu(Tv,x)u\big))w\Big)\\
~ &&+[Tw,T\big(\mu(Tu,x)v\big)]_\g+T\Big(\rho(T\big(\mu(Tu,x)v\big))w\Big)\\
~ &\stackrel{\eqref{chomo2}}{=}&[\Courant{Tu,Tv,Tw}_\g,x]_\g+T\Big(\rho(x)\big(D(Tu,Tv)w+\mu(Tv,Tw)u-\mu(Tu,Tw)v+\Courant{u,v,w}_\h\big)\Big)\\
~ &&-\Courant{Tu,Tv,[Tw,x]_\g}_\g+T\Big(\mu(Tv,[Tw,x]_\g)u-\mu(Tu,[Tw,x]_\g)v\Big)\\
~ &&-T\Big(D(Tu,Tv)\rho(x)w+\mu(Tv,T\big(\rho(x)w\big))u-\mu(Tu,T\big(\rho(x)w\big))v+\Courant{u,v,\rho(x)w}_\h\Big)\\
~ &&+T\Big(\mu(Tv,T\big(\rho(x)w\big))u-\mu(Tu,T\big(\rho(x)w\big))v\Big)\\
~ &&-T\Big(\rho(Tw)\big(\mu(Tv,x)u\big)-\rho(T(\mu(Tv,x)u))w+[w,\mu(Tv,x)u]_\h\Big)-T\Big(\rho(T\big(\mu(Tv,x)u\big))w\Big)\\
~ &&+T\Big(\rho(Tw)\big(\mu(Tu,x)v\big)-\rho(T(\mu(Tu,x)v))w+[w,\mu(Tu,x)v]_\h\Big)+T\Big(\rho(T\big(\mu(Tu,x)v\big))w\Big)\\
~ &\stackrel{\eqref{RLYe},\eqref{*1},\eqref{*3}}{=}&0.
\end{eqnarray*}
Other equalities can be obtained similarly. We omit the details.
\end{proof}

\begin{rmk}\label{rmk:coho}
Let $T:\h\longrightarrow\g$ be a relative Rota-Bxter operator of weight $1$ with respect to an action $(\rho,\mu)$, then by Corollary \ref{cor:des}, $T$ induces a sub-adjacent \LYA ~$(\h,[\cdot,\cdot]_T,\Courant{\cdot,\cdot,\cdot}_T)$. Consequently, by Proposition \ref{newrepre}, we obtain a cohomology of this sub-adjacent \LYA ~$(\h,[\cdot,\cdot]_T,\Courant{\cdot,\cdot,\cdot}_T)$ with the coefficients in the representation $(\g;\rho_T,\mu_T)$:
$$\Big(\bigoplus_{p=1}^\infty C_{LieY}^p(\h,\g),\delta^T\Big),$$
where the coboundary map $\delta^T$ is the corresponding coboundary map given by \eqref{cohomology1}-\eqref{cohomology4} with coefficients in the representation $(\g;\rho_T,\mu_T)$.
\end{rmk}

Let $(\g,[\cdot,\cdot]_\g,\Courant{\cdot,\cdot,\cdot}_\g)$ and $(\h,[\cdot,\cdot]_\h,\Courant{\cdot,\cdot,\cdot}_\h)$ be two Lie-Yamaguti algebras. Let $(\rho,\mu)$ be an action of $\g$ on $\h$. Define $\delta:\wedge^2\g\longrightarrow\Hom(\h,\g)$ to be
\begin{eqnarray}
\Big(\delta(x,y)\Big)v:=T\Big(D(x,y)v\Big)-\Courant{x,y,Tv}_\g,\quad\forall x,y\in \g,~v\in \h.\label{0cochain}
\end{eqnarray}

\begin{pro}\label{0co}
With the notations above, $\delta(x,y)$ defined by \eqref{0cochain} is a $1$-cocycle of the sub-adjacent Lie-Yamaguti algebra $(\h,[\cdot,\cdot]_T,\Courant{\cdot,\cdot,\cdot}_T)$ with coefficients in the representation $(\g;\rho_T,\mu_T)$.
\end{pro}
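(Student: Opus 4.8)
The plan is to show directly that $\delta(x,y)$, regarded as a $1$-cochain in $C^1_{\rm LieY}(\h,\g)$ for each fixed pair $x,y\in\g$, lies in the kernel of the coboundary operator $\delta^T$ attached in Remark \ref{rmk:coho} to the descent \LYA ~$(\h,[\cdot,\cdot]_T,\Courant{\cdot,\cdot,\cdot}_T)$ together with its representation $(\g;\rho_T,\mu_T)$ from Proposition \ref{newrepre}. By the $n=0$ formulas \eqref{cohomology3}-\eqref{cohomology4}, this is equivalent to the two identities
\begin{eqnarray*}
\rho_T(u)\big(\delta(x,y)v\big)-\rho_T(v)\big(\delta(x,y)u\big)-\delta(x,y)([u,v]_T)&=&0,\\
D_T(u,v)\big(\delta(x,y)w\big)+\mu_T(v,w)\big(\delta(x,y)u\big)-\mu_T(u,w)\big(\delta(x,y)v\big)-\delta(x,y)(\Courant{u,v,w}_T)&=&0,
\end{eqnarray*}
for all $u,v,w\in\h$. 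Before expanding, I would record the reformulation $\delta(x,y)=T\circ D(x,y)-\frkL_{x,y}\circ T$, where $\frkL_{x,y}:=\Courant{x,y,\cdot}_\g$ is the inner derivation of Example \ref{ad}; indeed \eqref{0cochain} reads $(\delta(x,y))v=T(D(x,y)v)-\frkL_{x,y}(Tv)$. The point of this rewriting is that \eqref{LY3} and \eqref{fundamental} say precisely that $\frkL_{x,y}$ is a derivation of both the binary and the ternary bracket of $\g$, while $T$ is a \LYA ~homomorphism out of the descent algebra by Corollary \ref{cor:des}.

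For the first identity I would substitute \eqref{newrep1}, the descent bracket $[u,v]_T=\rho(Tu)v-\rho(Tv)u+[u,v]_\h$, and \eqref{0cochain}. Every term of the shape $[Tu,T(\cdot)]_\g$ that appears is rewritten by the defining identity \eqref{chomo1}, while terms $[Tu,\frkL_{x,y}(Tv)]_\g$ are handled by the derivation property \eqref{LY3}. The surviving mixed terms, in which $\rho$ or $\mu$ is applied to a bracket of $\h$ or to an element of the form $\rho(Tu)v$, are annihilated by the action conditions \eqref{*1}-\eqref{*3} (recall that $D(x,y)$ lands in $C(\h)$ and kills brackets), and the remainder collapses by \eqref{chomo1} together with the representation relations for $(\rho,\mu)$.

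The second identity is the substantial one. Substituting \eqref{newrep2}-\eqref{newrep3}, the descent triple bracket, and \eqref{0cochain} produces a large number of summands, which I would sort into two blocks. The pure-$\g$ block consists of iterated brackets such as $\Courant{Tu,Tv,\frkL_{x,y}(Tw)}_\g$ and $\Courant{x,y,\Courant{Tu,Tv,Tw}_\g}_\g$ (the latter arising from $\frkL_{x,y}(T\Courant{u,v,w}_T)$ via $T\Courant{u,v,w}_T=\Courant{Tu,Tv,Tw}_\g$); these cancel exactly because $\frkL_{x,y}$ is a derivation of $\Courant{\cdot,\cdot,\cdot}_\g$, that is, by the fundamental identity \eqref{fundamental}. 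The remaining $T(\cdots)$ block, obtained after clearing every $\Courant{T\cdot,T\cdot,T\cdot}_\g$ term through \eqref{chomo2}, reduces — once the action conditions \eqref{*1}-\eqref{*3} remove those terms in which $\rho,\mu$ or $D$ is applied to a bracket of $\h$ — to a combination of the representation axioms \eqref{RLYd}, \eqref{RYT4} and \eqref{RLY5} for $(\rho,\mu)$, and hence vanishes.

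The main obstacle is the bookkeeping in this last step: each occurrence of $[T\cdot,T\cdot]_\g$ and $\Courant{T\cdot,T\cdot,T\cdot}_\g$ must be expanded through \eqref{chomo1}-\eqref{chomo2}, and the resulting $T$-correction terms must be matched against the $T(D(x,y)\cdot)$ and $\frkL_{x,y}(T\cdot)$ pieces of $\delta(x,y)$. The conceptual control that keeps this organized is precisely the pair of clean facts isolated above — that $T$ is a homomorphism from the descent \LYA ~and that $\frkL_{x,y}$ is a derivation of $\g$ — which guarantee a priori that the pure-$\g$ block and the pure-$T$ block collapse independently, leaving only the genuine cross terms to be killed by the action conditions \eqref{*1}-\eqref{*3}.
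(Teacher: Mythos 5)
Your proposal is correct and follows essentially the same route as the paper: a direct verification that $\delta_{\rm I}^T\big(\delta(x,y)\big)$ and $\delta_{\rm II}^T\big(\delta(x,y)\big)$ vanish, expanding via \eqref{newrep1}--\eqref{newrep3} and the descent brackets, clearing the $[T\cdot,T\cdot]_\g$ and $\Courant{T\cdot,T\cdot,T\cdot}_\g$ terms through \eqref{chomo1}--\eqref{chomo2}, and finishing with \eqref{LY3}, \eqref{fundamental}, \eqref{RLYe} and the action conditions \eqref{*1}--\eqref{*3} (the paper carries out the first identity explicitly and leaves the second as ``similarly''). Your reformulation $\delta(x,y)=T\circ D(x,y)-\frkL_{x,y}\circ T$, with $\frkL_{x,y}$ a derivation and $T$ a homomorphism out of the descent algebra, is only an organizational gloss on the same computation, not a different argument.
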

\begin{proof}
We need to show that $\delta_{\rm I}^T\big(\delta(x,y)\big)(u,v)=0$ and that $\delta^T_{\rm II}\big(\delta(x,y)\big)(u,v,w)=0$ for all $u,v,w \in \h$. Indeed, for all $u,v\in \h$, we compute that
\begin{eqnarray*}
~ &&\delta_{\rm I}^T\big(\delta(x,y)\big)(u,v)\\
~ &\stackrel{\eqref{cohomology3}}{=}&\rho_T(u)\delta(x,y)(v)-\rho_T(v)\delta(x,y)(u)-\delta(x,y)[u,v]_T\\
~ &\stackrel{\eqref{newrep1}}{=}&[Tu,T(D(x,y)v)]_\g-[Tu,\Courant{x,y,Tv}_\g]_\g\\
~ &&+T\Big(\rho(T(D(x,y)v))u\Big)-T\Big(\rho\big(\Courant{x,y,Tv}_\g\big)u\Big)\\
~ &&-[Tv,T(D(x,y)u)]_\g+[Tv,\Courant{x,y,Tu}_\g]_\g\\
~ &&-T\Big(\rho(T(D(x,y)u))v\Big)+T\Big(\rho\big(\Courant{x,y,Tu}_\g\big)v\Big)\\
~ &&-T\Big(D(x,y)\big([u,v]_T\big)\Big)+\Courant{x,y,T[u,v]_T}_\g\\
~ &\stackrel{\eqref{chomo1}}{=}&T\Big(\rho(Tu)D(x,y)v-\rho(T(D(x,y)v))u+[u,D(x,y)v]_\h\Big)-[Tu,\Courant{x,y,Tv}_\g]_\g\\
~ &&+T\Big(\rho(T(D(x,y)v))u\Big)-T\Big(\rho\big(\Courant{x,y,Tv}_\g\big)u\Big)\\
~ &&-T\Big(\rho(Tv)D(x,y)u-\rho(T(D(x,y)u))v+[v,D(x,y)u]_\h\Big)+[Tv,\Courant{x,y,Tu}_\g]_\g\\
~ &&-T\Big(\rho(T(D(x,y)u))v\Big)+T\Big(\rho\big(\Courant{x,y,Tu}_\g\big)v\Big)\\
~ &&-T\Big(D(x,y)\big(\rho(Tu)v-\rho(Tv)u+[u,v]_\h\big)\Big)+\Courant{x,y,[Tu,Tv]_\g}_\g\\
~ &\stackrel{\eqref{LY3},\eqref{RLYe},\eqref{*1}}{=}&0.
\end{eqnarray*}
Similarly, we can show that $\delta^T_{\rm II}\big(\delta(x,y)\big)(u,v,w)=0$ for all $u,v,w\in \h$.
\end{proof}

By now, we establish the cohomology of relative Rota-Baxter operator of nonzero weights on Lie-Yamaguti algebra as follows.
Let $T:\h\longrightarrow\g$ be a relative Rota-Baxter operator of weight $1$ from a Lie-Yamaguti algebra $(\h,[\cdot,\cdot]_\h,\Courant{\cdot,\cdot,\cdot}_\h)$ to another \LYA ~$(\g,[\cdot,\cdot]_\g,\Courant{\cdot,\cdot,\cdot}_\g)$ with respect to an action $(\rho,\mu)$. Define the set of $n$-cochains to be
\begin{eqnarray*}
\frkC_T^p(\h,\g)=
\begin{cases}
C_{LieY}^p(\h,\g), & p\geq 1,\\
\wedge^2\g, & p=0.
\end{cases}
\end{eqnarray*}

Define the coboundary map $\partial:\frkC_T^p(\h,\g)\longrightarrow\frkC_T^{p+1}(\h,\g)$ to be
\begin{eqnarray*}
\partial=
\begin{cases}
\delta^T, & p\geq 1,\\
\delta, & p=0,
\end{cases}
\end{eqnarray*}
Then combining Remark \ref{rmk:coho} with Proposition \ref{0co}, we obtain that $\Big(\bigoplus_{n=0}^\infty\frkC_T^n(\h,\g),\partial\Big)$ is a complex. Denote the set of $n$-cochains by $\huaZ_T^n(\h,\g)$, and denote the set of $n$-cobonudaries by $\huaB_T^n(\h,\g)$. The resulting $n$-th cohomology group is given by
$$\huaH_T^n(\h,\g):=\huaZ_T^n(\h,\g)/\huaB_T^n(\h,\g), n\geq 1.$$

\begin{defi}
The cohomology of the cochian complex $\Big(\bigoplus_{n=0}^\infty\frkC_T^n(\h,\g),\partial\Big)$ is called the {\bf cohomology} of the relative Rota-Baxter operator $T$ of weight $1$.
\end{defi}

\subsubsection{Functorial properties of cohomology of relative Rota-Baxter operators of nonzero weight}

In this subsection, we show that the cohomology theory for relative Rota-Baxter operators of nonzero weights has some functorial properties.

\begin{pro}\label{pro1}
Let $T$ and $T'$ be two relative Rota-Baxter operator of weight $1$ from a Lie-Yamaguti algebra $(\h,[\cdot,\cdot]_\h,\Courant{\cdot,\cdot,\cdot}_\h)$ to another Lie-Yamaguti algebra $(\g,[\cdot,\cdot]_\g,\Courant{\cdot,\cdot,\cdot}_\g)$ with respect to an action $(\rho,\mu)$. Let $(\psi_\g,\psi_\h)$ be a homomorphism from $T'$ to $T$. Then
\begin{itemize}
\item [\rm (i)] $\psi_\h$ is a \LYA ~from the descendent \LYA ~$(\h,[\cdot,\cdot]_{T'},\Courant{\cdot,\cdot,\cdot}_{T'})$ of $T'$ to another descent \LYA ~$(\h,[\cdot,\cdot]_{T},\Courant{\cdot,\cdot,\cdot}_{T})$ of $T$;
    \item[\rm (ii)] the induced representation $(\g;\rho_{T'},\mu_{T'})$ of the descendent \LYA ~$(\h,[\cdot,\cdot]_{T'},\\
    \Courant{\cdot,\cdot,\cdot}_{T'})$ and the induced representation $(\g;\rho_{T},\mu_{T})$ of the descendent \LYA ~$(\h,[\cdot,\cdot]_{T},\Courant{\cdot,\cdot,\cdot}_{T})$ satisfy
\begin{eqnarray*}
\psi_\g\circ \rho_{T'}(u)&=&\rho_T(\psi_\h(u))\circ\psi_\g,\\
\psi_\g\circ \mu_{T'}(u,v)&=&\rho_T(\psi_\h(u),\psi_\h(v))\circ\psi_\g,\quad\forall u,v\in \h.
\end{eqnarray*}
That is, the following two diagrams commute:
\[
\xymatrix{
\g \ar[r]^{\psi_\g} \ar[d]_{\rho_{T'}(u)} & \g \ar[d]^{\rho_T(\psi_\h(u))} &\quad\quad\quad\quad &   \g \ar[r]^{\psi_\g} \ar[d]_{\mu_{T'}(u,v)} & \g \ar[d]^{\mu_T(\psi_\h(u),\psi_\h(v))}\\
\g \ar[r]_{\psi_\g}                       & \g,                             &\quad\quad\quad\quad &   \g \ar[r]_{\psi_\g}                        & \g.
}
\]
\end{itemize}
\end{pro}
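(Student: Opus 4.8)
The plan is to handle the two parts separately: part (i) will reduce to the structural results already established for post-\LYA s, while part (ii) is a direct diagram chase from the definitions \eqref{newrep1}--\eqref{newrep2} of the induced representation.

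For part (i), I would first note that the descent \LYA $(\h,[\cdot,\cdot]_T,\Courant{\cdot,\cdot,\cdot}_T)$ of Corollary \ref{cor:des} is exactly the sub-adjacent \LYA of the post-\LYA induced by $T$ in Theorem \ref{underlying}. Indeed, substituting $u*v=\rho(Tu)v$, $\{u,v,w\}=\mu(Tv,Tw)u$, and the identity $\{u,v,w\}_D=D(Tu,Tv)w$ established in the proof of Theorem \ref{underlying} into the formulas \eqref{sub1} and \eqref{sub2} recovers precisely $[\cdot,\cdot]_T$ and $\Courant{\cdot,\cdot,\cdot}_T$, and likewise for $T'$. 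Since Proposition \ref{homopost} already shows that $\psi_\h$ is a homomorphism of the induced post-\LYA s, Corollary \ref{homosub} then yields (i) immediately. Alternatively, (i) can be verified by hand: expanding $\psi_\h([u,v]_{T'})$ and applying \eqref{crosshomo1} together with \eqref{crho1} in the form $\psi_\g(T'u)=T\psi_\h(u)$ converts $\psi_\h(\rho(T'u)v)$ into $\rho(T\psi_\h(u))\psi_\h(v)$, while $\psi_\h$ being a \LYA homomorphism takes care of the $[u,v]_\h$ summand; the ternary identity is analogous, now invoking \eqref{crosshomo2} and \eqref{crosshomo3}.

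For part (ii), I would verify each intertwining relation by evaluating on an arbitrary $x\in\g$. Applying $\psi_\g$ to $\rho_{T'}(u)(x)=[T'u,x]_\g+T'(\rho(x)u)$ from \eqref{newrep1}, the first summand becomes $[T\psi_\h(u),\psi_\g(x)]_\g$ because $\psi_\g$ is a \LYA homomorphism and $\psi_\g(T'u)=T\psi_\h(u)$ by \eqref{crho1}, while the second becomes $T\big(\rho(\psi_\g(x))\psi_\h(u)\big)$ using $\psi_\g\circ T'=T\circ\psi_\h$ and then \eqref{crosshomo1}; the sum is exactly $\rho_T(\psi_\h(u))(\psi_\g(x))$ by \eqref{newrep1}, giving the first commuting square. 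The $\mu_T$-relation is handled identically: applying $\psi_\g$ to $\mu_{T'}(u,v)(x)$ from \eqref{newrep2}, the ternary term maps under the homomorphism property to $\Courant{\psi_\g(x),T\psi_\h(u),T\psi_\h(v)}_\g$, and the correction term is pushed through $T$ via \eqref{crho1} and then rewritten with \eqref{crosshomo3} and \eqref{crosshomo2}, so that the result matches $\mu_T(\psi_\h(u),\psi_\h(v))(\psi_\g(x))$ term by term.

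There is no serious obstacle; the whole argument is bookkeeping once the key lever \eqref{crho1} is recognized as the device that turns every occurrence of $\psi_\g(T'u)$ or $\psi_\g\circ T'$ into the corresponding $T\psi_\h(u)$ or $T\circ\psi_\h$, thereby aligning the $T'$-structure on the source with the $T$-structure on the target. The only point needing care is applying the compatibilities \eqref{crosshomo1}--\eqref{crosshomo3} with the correct arguments after this conversion; notably, the action conditions \eqref{*1}--\eqref{*3} never enter, since no term has to be collapsed to zero.
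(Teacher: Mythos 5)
Your proposal is correct and follows essentially the same route as the paper: part (i) is obtained exactly as in the text by combining Corollary \ref{homosub} with Proposition \ref{homopost} (your observation that the descent \LYA is the sub-adjacent \LYA of the induced post-\LYA is precisely how Corollary \ref{cor:des} is derived), and part (ii) is the same direct computation, converting $\psi_\g(T'u)$ into $T\psi_\h(u)$ via \eqref{crho1} and then applying \eqref{crosshomo1}--\eqref{crosshomo3}. Your closing remark that the action conditions \eqref{*1}--\eqref{*3} are not needed here is also consistent with the paper's argument.
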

\begin{proof}
(i) follows from Corollary \ref{homosub} and Proposition \ref{homopost}.

(ii) For all $u,v\in \h$ and $x\in \g$, we compute that
\begin{eqnarray*}
\psi_\g\Big(\rho_{T'}(u)x\Big)&\stackrel{\eqref{newrep1}}{=}&\psi_\g\Big([T'u,x]_\g+T'\big(\rho(x)u\big)\Big)\\
~ &\stackrel{\eqref{v1},\eqref{crho1}}{=}&[\psi_\g(T'u),\psi_\g(x)]_\g+T\Big(\psi_\h\big(\rho(x)u\big)\Big)\\
~ &\stackrel{\eqref{crosshomo2},\eqref{crosshomo3}}{=}&[\psi_\g(T'u),\psi_\g(x)]_\g+T\Big(\rho\big(\psi_\g(x)\big)\big(\psi_\h(u)\big)\Big)\\
~ &\stackrel{\eqref{crho1}}{=}&[T\psi_\h(u),\psi_\g(x)]_\g+T\Big(\rho\big(\psi_\g(x)\big)\big(\psi_\h(u)\big)\Big)\\
~ &\stackrel{\eqref{newrep1}}{=}&\rho_T(\psi_\h(u))\big(\psi_\g(x)\big),
\end{eqnarray*}
and that
\begin{eqnarray*}
~ &&\psi_\g\Big(\mu_{T'}(u,v)x\Big)\\
~ &\stackrel{\eqref{newrep2}}{=}&\psi_\g\Big(\Courant{x,T'u,T'v}_\g-T'\big(D(x,T'u)v-\mu(x,T'v)u\big)\Big)\\
~ &\stackrel{\eqref{v2},\eqref{crho1}}{=}&\Courant{\psi_\g(x),\psi_\g(T'u),\psi_\g(T'v)}_\g-T\psi_\h\Big(D(x,T'u)v-\mu(x,T'v)u\Big)\\
~ &\stackrel{\eqref{crosshomo2},\eqref{crosshomo3}}{=}&\Courant{\psi_\g(x),T\psi_\h(u),T\psi_\h(v)}_\g-T\Big(D(\psi_\g(x),\psi_\g(T'u))(\psi_\h(v))-\mu(\psi_\g(x),\psi_\g(T'v))(\psi_\h(u))\Big)\\
~ &\stackrel{\eqref{crho1}}{=}&\Courant{\psi_\g(x),T\psi_\h(u),T\psi_\h(v)}_\g-T\Big(D(\psi_\g(x),T\psi_\h(u))(\psi_\h(v))-\mu(\psi_\g(x),T\psi_\h(v))(\psi_\h(u))\Big)\\
~ &\stackrel{\eqref{newrep2}}{=}&\mu_T(\psi_\h(u),\psi_\h(v))\big(\psi_\g(x)\big).
\end{eqnarray*}
This finishes the proof.
\end{proof}

Let $T$ and $T'$ be two relative Rota-Baxter operator of weight $1$ from a Lie-Yamaguti algebra $(\h,[\cdot,\cdot]_\h,\Courant{\cdot,\cdot,\cdot}_\h)$ to another Lie-Yamaguti algebra $(\g,[\cdot,\cdot]_\g,\Courant{\cdot,\cdot,\cdot}_\g)$ with respect to an action $(\rho,\mu)$. Let $(\psi_\g,\psi_\h)$ be a homomorphism from $T'$ to $T$ such that $\psi_\h$ is invertible. For $n\geqslant 2$, define a linear map $p:\frkC_{T'}^n(\h,\g)\longrightarrow\frkC_{T}^n(\h,\g)$ to be
$$p(f,g)=\Big(p_{\rm I}(f),p_{\rm II}(g)\Big), \quad \forall (f,g)\in \frkC_{T'}^n(\h,\g)$$
where $p_{\rm I}(f)$ and $p_{\rm II}(g)$ are defined by
\begin{eqnarray}
p_{\rm I}(f)(\frkU_1,\cdots,\frkU_n)&=&\psi_\g\Big(f\big(\psi_\h^{-1}(\frkU_1),\cdots,\psi_\h^{-1}(\frkU_n)\big)\Big),\label{w1}\\
p_{\rm II}(g)(\frkU_1,\cdots,\frkU_n,v)&=&\psi_\g\Big(g\big(\psi_\h^{-1}(\frkU_1),\cdots,\psi_\h^{-1}(\frkU_n),\psi_\h^{-1}(v)\big)\Big),\label{w2}
\end{eqnarray}
respectively.
Here $\frkU_k=u_k\wedge v_k\in \wedge^2\h, ~k=1,2,\cdots,n,~v\in \h$, and we use a notation $\psi_\h^{-1}(\frkU_k)=\psi_\h^{-1}(u_k)\wedge\psi_\h^{-1}(v_k),~k=1,2,\cdots,n.$ In particular, for $n=1$, $p:\frkC_{T'}^1(\h,\g)\longrightarrow\frkC_{T}^1(\h,\g)$ is defined to be
$$p(f)(v)=\psi_\g\Big(f\big(\psi_\h^{-1}(v)\big)\Big), \quad\forall v\in \h, \ f\in \frkC_{T'}^1(\h,\g).$$

\begin{thm}
With the notations above, $p$ is  a cochain map from a cochain $(\oplus_{p=1}^\infty\frkC_{T'}^p(\h,\g),\delta^{T'})$ to $(\oplus_{p=1}^\infty\frkC_{T}^p(\h,\g),\delta^{T})$. Consequently, $p$ induces a homomorphism $p_*:\huaH_{T'}^n(\h,\g)\longrightarrow\huaH_{T}^n(\h,\g)$ between cohomology groups.
\end{thm}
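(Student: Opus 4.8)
The plan is to show that $p$ intertwines the two differentials, i.e. that $p\circ\delta^{T'}=\delta^T\circ p$ as maps $\frkC_{T'}^p(\h,\g)\to\frkC_T^{p+1}(\h,\g)$; once this identity holds, $p$ carries cocycles to cocycles and coboundaries to coboundaries, so the induced map $p_*:\huaH_{T'}^n(\h,\g)\longrightarrow\huaH_{T}^n(\h,\g)$ exists automatically and is a homomorphism. The crucial observation is that both coboundary maps are instances of the Yamaguti differential \eqref{cohomology1}--\eqref{cohomology4}: $\delta^{T'}$ is built from the descent \LYA ~$(\h,[\cdot,\cdot]_{T'},\Courant{\cdot,\cdot,\cdot}_{T'})$ with coefficients in $(\g;\rho_{T'},\mu_{T'})$, while $\delta^{T}$ uses $(\h,[\cdot,\cdot]_{T},\Courant{\cdot,\cdot,\cdot}_{T})$ with coefficients in $(\g;\rho_{T},\mu_{T})$. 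Thus the whole argument reduces to checking that the conjugation $f\mapsto\psi_\g\circ f\circ(\psi_\h^{-1}\times\cdots\times\psi_\h^{-1})$ respects every structural ingredient occurring in those formulas.

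First I would assemble the intertwining identities that drive the computation. From Proposition \ref{pro1}(i), $\psi_\h$ is an isomorphism of \LYA s sending $[\cdot,\cdot]_{T'}$ to $[\cdot,\cdot]_T$ and $\Courant{\cdot,\cdot,\cdot}_{T'}$ to $\Courant{\cdot,\cdot,\cdot}_T$; consequently $\psi_\h$ (extended to $\wedge^2\h$) also transports the circle product, since $\frkX_k\circ\frkX_l=\Courant{x_k,y_k,x_l}\wedge y_l+x_l\wedge\Courant{x_k,y_k,y_l}$ is assembled solely from $\Courant{\cdot,\cdot,\cdot}$ and the wedge, giving $\psi_\h^{-1}(\frkU_k\circ_T\frkU_l)=\psi_\h^{-1}(\frkU_k)\circ_{T'}\psi_\h^{-1}(\frkU_l)$. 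From Proposition \ref{pro1}(ii), $\psi_\g$ intertwines the representations, $\psi_\g\circ\rho_{T'}(u)=\rho_T(\psi_\h(u))\circ\psi_\g$ and $\psi_\g\circ\mu_{T'}(u,v)=\mu_T(\psi_\h(u),\psi_\h(v))\circ\psi_\g$. Finally, because $D_{T'}=D_{\rho_{T'},\mu_{T'}}$ and $D_T=D_{\rho_T,\mu_T}$ are built from $\rho,\mu$ and the bracket through \eqref{rep}, these two relations together with (i) yield the matching identity $\psi_\g\circ D_{T'}(u,v)=D_T(\psi_\h(u),\psi_\h(v))\circ\psi_\g$, which I would record as a one-line preliminary computation.

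With these identities in hand, the core step is substitution. Writing $\frkX_k=\psi_\h^{-1}(\frkU_k)$, I would apply $\delta^{T}$ to $p(f,g)=(p_{\rm I}(f),p_{\rm II}(g))$ and push $\psi_\g$ through each summand of \eqref{cohomology1} and \eqref{cohomology2} using the intertwining relations: every $\rho_T$-, $\mu_T$-, or $D_T$-factor becomes $\psi_\g$ applied to the corresponding $\rho_{T'}$-, $\mu_{T'}$-, or $D_{T'}$-factor evaluated on the $\psi_\h^{-1}$-images, every bracket $[x_{n+1},y_{n+1}]_T$ and $\Courant{x_k,y_k,z}_T$ and every circle product $\frkU_k\circ_T\frkU_l$ pulls back by $\psi_\h^{-1}$ to its $T'$-counterpart, and the extra argument $z\in\g$ is handled by $\psi_\g$ exactly as prescribed by \eqref{w1}--\eqref{w2}. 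Term by term this reproduces $\psi_\g$ applied to $\delta^{T'}(f,g)$ evaluated at the pulled-back entries, which is precisely $p\big(\delta^{T'}(f,g)\big)$. The low-degree cases $p=1$ are settled in the same way using \eqref{cohomology3}--\eqref{cohomology4}.

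The main obstacle is organizational rather than conceptual: the $\delta_{\rm II}$ formula \eqref{cohomology2} carries four families of summands (a $\mu$-term, a $D$-term, a circle-product term, and the $\Courant{x_k,y_k,z}$-term), and one must confirm that the signs $(-1)^n$, $(-1)^{k+1}$, $(-1)^k$ and the hat-omission $\widehat{\frkX_k}$ are transported untouched by the conjugation---which they are, since $\psi_\g$ and $\psi_\h$ are linear and index-preserving, so they commute with the summations and sign factors. Having established $p\circ\delta^{T'}=\delta^{T}\circ p$, the induced homomorphism $p_*:\huaH_{T'}^n(\h,\g)\longrightarrow\huaH_{T}^n(\h,\g)$ follows at once.
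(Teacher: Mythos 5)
Your proposal is correct and follows essentially the same route as the paper: both expand $\delta^{T}\circ p$ and $p\circ\delta^{T'}$ term by term via \eqref{cohomology1}--\eqref{cohomology4} and match them using the intertwining relations of Proposition \ref{pro1} (that $\psi_\h$ is a homomorphism of descent \LYA s and that $\psi_\g$ intertwines $\rho_{T'},\mu_{T'}$ with $\rho_T,\mu_T$). Your explicit preliminary observations that the circle product and $D_{T'}$ are also transported are exactly the ingredients the paper invokes implicitly when it concludes "by Proposition \ref{pro1}", so nothing is missing.
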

\begin{proof}
For all $(f,g)\in \frkC_{H'}^n(\g,\h)~(n\geq2)$, and for all $\frkU_k=u_k\wedge v_k\in \wedge^2\h,~ k=1,2,\cdots,n,~v\in \h$, on one hand, we have
\begin{eqnarray*}
~ &&\delta_{\rm I}^T\Big(p_{\rm I}(f),p_{\rm II}(g)\Big)(\frkU_1,\cdots,\frkU_n)\\
~ &\stackrel{\eqref{cohomology1}}{=}&(-1)^{n-1}\Big(\rho_T(u_n)p_{\rm II}(g)\big(\frkU_1,\cdots,\frkU_{n-1},v_n\big)\\
~ &&\qquad-\rho_T(v_n)p_{\rm II}(g)\big(\frkU_1,\cdots,\frkU_{n-1},u_n\big)\\
~ &&\qquad-p_{\rm II}\big(\frkU_1,\cdots,\frkU_{n-1},[u_n,v_n]_T\big)\Big)\\
~ &&+\sum_{k=1}^{n-1}(-1)^{k+1}D_T(\frkU_k)p_{\rm I}(f)\big(\frkU_1,\cdots,\widehat{\frkU_k}\cdots,\frkU_n\big)\\
~ &&+\sum_{1\leq k<l\leq n}(-1)^kp_{\rm I}(f)\big(\frkU_1,\cdots,\widehat{\frkU_k}\cdots,\frkU_k\circ \frkU_l,\cdots,\frkU_n\big)\\
~ &\stackrel{\eqref{newrep1}}{=}&(-1)^{n-1}\Big([Tu_n,p_{\rm II}\big(\frkU_1,\cdots,\frkU_{n-1},v_n\big)]_\g+T\big(\rho(p_{\rm II}(g)(\frkU_1,\cdots,\frkU_{n-1},v_n\big))u_n\big)\\
~ &&~~~-[Tv_n,p_{\rm II}\big(\frkU_1,\cdots,\frkU_{n-1},u_n\big)]_\g-T\big(\rho(p_{\rm II}(g)(\frkU_1,\cdots,\frkU_{n-1},u_n\big))v_n\big)\\
~ &&~~~-p_{\rm II}\big(\frkU_1,\cdots,\frkU_{n-1},[u_n,v_n]_T\big)\Big)\\
~ &&+\sum_{k=1}^{n-1}(-1)^{k+1}D_T(\frkU_k)p_{\rm I}(f)\big(\frkU_1,\cdots,\widehat{\frkU_k}\cdots,\frkU_n\big)\\
~ &&+\sum_{1\leq k<l\leq n}(-1)^kp_{\rm I}(f)\big(\frkU_1,\cdots,\widehat{\frkU_k}\cdots,\frkU_k\circ \frkU_l,\cdots,\frkU_n\big)\\
~ &\stackrel{\eqref{w1},\eqref{w2}}{=}&(-1)^{n-1}\Big([Tu_n,\psi_\g\big(g(\psi_\h^{-1}(\frkU_1),\cdots,\psi_\h^{-1}(\frkU_{n-1}),\psi_\h^{-1}(v_n))\big)]_\g\\
~ &&~~~~+T\Big(\rho\big(\psi_\g(g(\psi_\h^{-1}(\frkU_1),\cdots,\psi_\h^{-1}(\frkU_{n-1)},\psi_\h^{-1}(v_n)\big))u_n\Big)\\
~ &&~~~-[Tv_n,\psi_\g\big(g(\psi_\h^{-1}(\frkU_1),\cdots,\psi_\h^{-1}(\frkU_{n-1}),\psi_\h^{-1}(u_n))\big)]_\g\\
~ &&~~~~-T\Big(\rho\big(\psi_\g(g(\psi_\h^{-1}(\frkU_1),\cdots,\psi_\h^{-1}(\frkU_{n-1)},\psi_\h^{-1}(u_n)\big))v_n\Big)\\
~ &&~~~-\psi_\g\big(g(\psi_\h^{-1}(\frkU_1),\cdots,\psi_\h^{-1}(\frkU_{n-1}),\psi_\h^{-1}([u_n,v_n]_T))\big)\Big)\\
~ &&+\sum_{k=1}^{n-1}(-1)^{k+1}D_T(\frkU_k)\psi_\g\Big(f(\big(\psi_\h^{-1}(\frkU_1),\cdots,\widehat{\psi_\h^{-1}(\frkU_k)}\cdots,\psi_\h^{-1}(\frkU_n)\big)\Big)\\
~ &&+\sum_{1\leq k<l\leq n}(-1)^k\psi_\g\Big(f\big(\psi_\h^{-1}(\frkU_1),\cdots,\widehat{\psi_\h^{-1}(\frkU_k)}\cdots,\psi_\h^{-1}(\frkU_k)\circ \psi_\h^{-1}(\frkU_l),\cdots,\psi_\h^{-1}(\frkU_n)\big)\Big),
\end{eqnarray*}
on the other hand, we also have
\begin{eqnarray*}
~ &&p_{\rm I}\Big(\delta_{\rm I}^{T'}(f,g)\Big)(\frkU_1,\cdots,\frkU_n)\\
~ &\stackrel{\eqref{w1}}{=}&\psi_\g\Big(\delta_{\rm I}^{T'}(f,g)(\psi_\h^{-1}(\frkU_1),\cdots,\psi_\h^{-1}(\frkU_n))\Big)\\
~ &\stackrel{\eqref{cohomology1}}{=}&(-1)^{n-1}\Big(\psi_\g\big([T'\psi_\h^{-1}(u_n),g(\psi_\h^{-1}(\frkU_1),\cdots,\psi_\h^{-1}(\frkU_{n-1}),\psi_\h^{-1}(v_n))]_\g\big)\\
~ &&~~~~+\psi_\g\big(T'(\rho(g(\psi_\h^{-1}(\frkU_1),\cdots,\psi_\h^{-1}(\frkU_{n-1}),\psi_\h^{-1}(v_n)))\psi_\h^{-1}(u_n))\big)\\
~ &&~~~~-\psi_\g\big([T'\psi_\h^{-1}(v_n),g(\psi_\h^{-1}(\frkU_1),\cdots,\psi_\h^{-1}(\frkU_{n-1}),\psi_\h^{-1}(u_n))]_\g\big)\\
~ &&~~~~-\psi_\g\big(T'(\rho(g(\psi_\h^{-1}(\frkU_1),\cdots,\psi_\h^{-1}(\frkU_{n-1}),\psi_\h^{-1}(u_n)))\psi_\h^{-1}(v_n))\big)\\
~ &&~~~~-g(\psi_\h^{-1}(\frkU_1),\cdots,\psi_\h^{-1}(\frkU_{n-1}),[\psi_\h^{-1}(u_{n}),\psi_\h^{-1}(v_{n-1})]_{T'})\Big)\\
~ &&+\sum_{k=1}^{n-1}(-1)^{k+1}\psi_\g\Big(D_{T'}(\psi_\h^{-1}(\frkU_k))f\big(\psi_\h^{-1}(\frkU_1),\cdots,\widehat{\psi_\h^{-1}(\frkU_k)}\cdots,\psi_\h^{-1}(\frkU_n)\big)\Big)\\
~ &&+\sum_{1\leq k<l\leq n}(-1)^k\psi_\g\Big(f\big(\psi_\h^{-1}(\frkU_1),\cdots,\widehat{\psi_\h^{-1}(\frkU_k)}\cdots,\psi_\h^{-1}(\frkU_k)\circ \psi_\h^{-1}(\frkU_l),\cdots,\psi_\h^{-1}(\frkU_n)\big)\Big).
\end{eqnarray*}
By the fact that the pair $(\psi_\g,\psi_\h)$ is a homomorphism from $T'$ to $T$ and Proposition \ref{pro1}, we deduce that $\delta_{\rm I}^T\Big(p_{\rm I}(f),p_{\rm II}(g)\Big)=p_{\rm I}\Big(\delta_{\rm I}^{T'}(f,g)\Big)$. Similarly, we also have $\delta_{\rm II}^T\Big(p_{\rm II}(f),p_{\rm II}(g)\Big)=p_{\rm II}\Big(\delta_{\rm II}^{T'}(f,g)\Big)$ for all $(f,g)\in \frkC_{T'}^n(\h,\g)~~(n\geq2)$. Moreover, the case of $n=1$ also holds. We omit the details.
\end{proof}

\subsection{Deformatons of relative Rota-Baxter operators of nonzero weights on Lie-Yamaguti algebras}
In this subsection, we study two kinds of deformations of relative Rota-Baxter operators of weight $1$ on Lie-Yamaguti algebras via the cohomology theory established in the former section.
\subsubsection{Linear deformations of relative Rota-Baxter operators of nonzero weights on Lie-Yamaguti algebras}
In this subsection, we use the cohomology constructed in the former section to characterize the linear deformations of relative Rota-Baxter operators of weight $1$ on Lie-Yamaguti algebras.

\begin{defi}
Let $T:\h\longrightarrow\g$ be a relative Rota-Baxter operator of weight $1$ from a Lie-Yamaguti algebra $(\h,[\cdot,\cdot]_\h,\Courant{\cdot,\cdot,\cdot}_\h)$ to another Lie-Yamaguti algebra $(\g,[\cdot,\cdot]_\g,\Courant{\cdot,\cdot,\cdot}_\g)$  with respect to an action $(\h;\rho,\mu)$. Let $\frkT:\h\longrightarrow\g$ be  a linear map. If $T_t:=T+t\frkT$ is still a relative Rota-Baxter operators of weight $1$ for all $t$, then we say that $\frkT$ generates a {\bf linear deformation} of the relative Rota-Baxter operator $T$ of weight $1$.
\end{defi}

It is direct to deduce that $\frkT$ generates a linear deformation of the relative Rota-Baxter operator $T$ of weight $1$, then $\frkT$ is a $2$-cocycle of $T$.

\begin{defi}
Let $T:\h\longrightarrow\g$ be a relative Rota-Baxter operator of weight $1$ from a Lie-Yamaguti algebra $(\h,[\cdot,\cdot]_\h,\Courant{\cdot,\cdot,\cdot}_\h)$ to another Lie-Yamaguti algebra $(\g,[\cdot,\cdot]_\g,\Courant{\cdot,\cdot,\cdot}_\g)$ with respect to an action $(\rho,\mu)$.
 \begin{itemize}
 \item[(i)] Two linear deformations $T_t^1=T+t\frkT_1$ and $T_t^2=T+t\frkT_2$ are called {\bf equivalent} if there exists an element $\frkX\in \wedge^2\g$ such that $({\Id}_\g+t\frkL(\frkX),{\Id}_\h+tD(\frkX))$ is a homomorphism from $T_t^2$ to $T_t^1$.
 \item[(ii)] A linear deformation $T_t=T+t\frkT$ of the relative Rota-Baxter operator $T$ of weight $1$ is called {\bf trivial} if it is equivalent to $T$.
 \end{itemize}
\end{defi}

If two linear deformations $T_t^2$ and $T_t^1$ are equivalent, then \emptycomment{${\Id}_\g+t\frkL(\frkX)$ is a homomorphism on the Lie-Yamaguti algebra $\g$, which implies that
\begin{eqnarray}
[\Courant{\frkX,y}_\g,\Courant{\frkX,z}_\g]_\g&=&0,\label{nij1}\\
~\Courant{\Courant{\frkX,y}_\g,\Courant{\frkX,z}_\g,t}_\g+\Courant{\Courant{\frkX,y}_\g,z,\Courant{\frkX,t}_\g}_\g+\Courant{y,\Courant{\frkX,z}_\g,\Courant{\frkX,t}_\g}_\g&=&0,\\
~\Courant{\Courant{\frkX,y}_\g,\Courant{\frkX,z}_\g,\Courant{\frkX,t}_\g}_\g&=&0
\end{eqnarray}
for all $y,z,t\in \g$.

Note that by Eqs. \eqref{crosshomo1} and \eqref{crosshomo2}, we obtain that for all $y,z\in \g$,
\begin{eqnarray}
\rho(\Courant{\frkX,y}_\g)D(\frkX)&=&0,\\
~\mu(\Courant{\frkX,y}_\g,z)D(\frkX)+\mu(y,\Courant{\frkX,z}_\g)D(\frkX)+\mu(\Courant{\frkX,y}_\g,\Courant{\frkX,z}_\g)&=&0,\\
~ \mu(\Courant{\frkX,y}_\g,\Courant{\frkX,z}_\g)D(\frkX)&=&0.\label{nij2}
\end{eqnarray}}
 Eq. \eqref{crho1} yields that for all $v\in \h$,
\begin{eqnarray*}
\Big({\Id}_\g+t\frkL(\frkX)\Big)\Big(T+t\frkT_2\Big)v=\Big(T+t\frkT_1\Big)\Big({\Id}_\h+tD(\frkX)\Big)v,
\end{eqnarray*}
which implies that
\begin{eqnarray}
\label{cohomology}\frkT_2v-\frkT_1v=T\Big(D(\frkX)v\Big)-\Courant{\frkX,Tv}_\g=\partial(\frkX)v.
\end{eqnarray}

Thus by \eqref{cohomology}, we have the following theorem.

\begin{thm}
Let $T:\h\longrightarrow\g$ be a relative Rota-Baxter operator of weight $1$ from a Lie-Yamaguti algebra $(\h,[\cdot,\cdot]_\h,\Courant{\cdot,\cdot,\cdot}_\h)$ to another Lie-Yamaguti algebra $(\g,[\cdot,\cdot]_\g,\Courant{\cdot,\cdot,\cdot}_\g)$ with respect to an action $(\rho,\mu)$. If two linear deformations $T_t^1=T+t\frkT_1$ and $T_t^2=T+t\frkT_2$ are equivalent, then $\frkT_1$ and $\frkT_2$ are in the same cohomology class in $\huaH_T^2(\h,\g)$.
\end{thm}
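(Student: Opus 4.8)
The plan is to show that both infinitesimals are cocycles and that their difference is a coboundary, so that they determine the same class in the cohomology of $T$; the identity \eqref{cohomology}, which carries the computational weight, is already available, so the argument mainly assembles existing pieces. First I would record that each $\frkT_i$ is a cocycle: since $T_t^i=T+t\frkT_i$ is a relative Rota-Baxter operator of weight $1$ for every $t$, one substitutes it into \eqref{chomo1} and \eqref{chomo2} and reads off the coefficient of $t$. After rewriting the resulting linear terms through the formulas \eqref{newrep1}--\eqref{newrep3} defining $\rho_T$, $\mu_T$ and $D_T$, these two coefficients become exactly $\delta_{\rm I}^T(\frkT_i)=0$ and $\delta_{\rm II}^T(\frkT_i)=0$. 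Hence $\frkT_1,\frkT_2\in\huaZ_T^1(\h,\g)$, as already observed in the remark preceding the statement.

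Next I would extract the coboundary from the equivalence. By definition there is $\frkX\in\wedge^2\g$ such that $({\Id}_\g+t\frkL(\frkX),{\Id}_\h+tD(\frkX))$ is a homomorphism from $T_t^2$ to $T_t^1$; imposing the intertwining condition \eqref{crho1}, namely $({\Id}_\g+t\frkL(\frkX))\circ T_t^2=T_t^1\circ({\Id}_\h+tD(\frkX))$, expanding both sides and collecting the coefficient of $t$ gives $\frkT_2v+\frkL(\frkX)(Tv)=T(D(\frkX)v)+\frkT_1v$. Since $\frkL=D_{\ad,\frkR}$ is the operator attached to the adjoint representation (Example \ref{ad}), one has $\frkL(\frkX)(Tv)=\Courant{\frkX,Tv}_\g$, so the relation reduces to
$$\frkT_2v-\frkT_1v=T\big(D(\frkX)v\big)-\Courant{\frkX,Tv}_\g,$$
which is precisely $\partial(\frkX)v$ by the definition \eqref{0cochain} of the coboundary $\partial$ on $0$-cochains; this is \eqref{cohomology}. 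As $\frkX\in\frkC_T^0=\wedge^2\g$, the difference $\frkT_2-\frkT_1=\partial(\frkX)$ lies in $\huaB_T^1(\h,\g)$.

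Combining the two steps yields $[\frkT_1]=[\frkT_2]$ in $\huaH_T^1(\h,\g)$, which is the assertion. I expect the only genuine labor to be the bookkeeping in the first step: one must check that the $t^1$-coefficient of the deformed relative Rota-Baxter identities coincides on the nose with the cocycle condition relative to the representation $(\g;\rho_T,\mu_T)$, and this forces one through the definitions \eqref{newrep1}--\eqref{newrep3}. The second step is short once one notices that $\frkL(\frkX)$ acts as $\Courant{\frkX,\cdot}_\g$, which is exactly what aligns the right-hand side with the $0$-cochain coboundary \eqref{0cochain}.
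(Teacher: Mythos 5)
Your argument is correct and follows the paper's own route: the paper likewise extracts \eqref{cohomology} from the $t$-coefficient of the intertwining condition \eqref{crho1}, identifies $\frkT_2-\frkT_1$ with $\partial(\frkX)$ via \eqref{0cochain}, and relies on the preceding remark that each infinitesimal is a cocycle (which you spell out in more detail). The only discrepancy is the degree label: you place the class in $\huaH_T^1(\h,\g)$ while the statement says $\huaH_T^2(\h,\g)$, but that mismatch is internal to the paper itself, since its definition $\frkC_T^1(\h,\g)=\Hom(\h,\g)$ makes your index the consistent one.
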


\subsubsection{Higher Order deformations of relative Rota-Baxter operators of nonzero weights on Lie-Yamaguti algebras}
In this subsection, we introduce a special cohomology class associated to an order $n$ deformation of a relative Rota-Baxter operator of weight $1$, and show that a deformation of order $n$ of a a relative Rota-Baxter operator of weight $1$ is extendable if and only if this cohomology class in the second cohomology group is trivial. Thus this kind of special cohomology class is called the obstruction class of an order $n$ deformation being extendable.

 \begin{defi}
 Let $T:\h\longrightarrow\g$ be a relative Rota-Baxter operator of weight $1$ from a Lie-Yamaguti algebra $(\h,[\cdot,\cdot]_\h,\Courant{\cdot,\cdot,\cdot}_\h)$ to another Lie-Yamaguti algebra $(\g,[\cdot,\cdot]_\g,\Courant{\cdot,\cdot,\cdot}_\g)$  with respect to an action $(\rho,\mu)$. If $T_t=\sum_{i=0}^n\frkT_it^i $ with  $\frkT_0=T$, $\frkT_i\in \Hom_{\mathbb K}(\h,\g)$, $i=1,2,\cdots,n$, defines a $\mathbb K[t]/(t^{n+1})$-module from $\h[t]/(t^{n+1})$ to the Lie-Yamaguti algebra $\g[t]/(t^{n+1})$ satisfying
 \begin{eqnarray}
 \label{ordern1}[T_tu,T_tv]_\g&=&T_t\Big(\rho(T_tu)v-\rho(T_tv)u+[u,v]_\h\Big),\\
\label{ordern2}\Courant{T_tu,T_tv,T_tw}_\g&=&T_t\Big(D(T_tu,T_tv)w+\mu(T_tv,T_tw)u-\mu(T_tu,T_tw)v+\Courant{u,v,w}_\h\Big),
 \end{eqnarray}
 for all $u,v,w\in \h$. we say that $T_t$ is an {\bf order $n$ deformation} of $T$.
 \end{defi}

 \begin{rmk}
 The left hand sides of Eqs. \eqref{ordern1} and \eqref{ordern2} hold in the Lie-Yamaguti algebra $\g[t]/(t^{n+1})$ and the right hand sides of  Eqs. \eqref{ordern1} and \eqref{ordern2} make sense since $T_t$ is a $\mathbb K[t]/(t^{n+1})$-module map.
 \end{rmk}

 \begin{defi}
 Let $T:\h\longrightarrow\g$ be a a relative Rota-Baxter operator of weight $1$ from a Lie-Yamaguti algebra $(\h,[\cdot,\cdot]_\h,\Courant{\cdot,\cdot,\cdot}_\h)$ to another Lie-Yamaguti algebra $(\g,[\cdot,\cdot]_\g,\Courant{\cdot,\cdot,\cdot}_\g)$ with respect to an action $(\rho,\mu)$. Let $T_t=\sum_{i=0}^n\frkT_it^i $ be an order $n$ deformation of $T$. If there exists a $1$-cochain $\frkT_{n+1}\in \Hom_{\mathbb K}(\h,\g)$ such that $\widetilde{T_t}=T_t+\frkT_{n+1}t^{n+1}$ is an order $n+1$ deformation of $T$, then we say that $T_t$ is {\bf extendable}.
 \end{defi}

The following theorem is the key conclusion in this section.
 \begin{thm}\label{ob}
Let $T:\h\longrightarrow\g$ be a a relative Rota-Baxter operator of weight $1$ from a Lie-Yamaguti algebra $(\h,[\cdot,\cdot]_\h,\Courant{\cdot,\cdot,\cdot}_\h)$ to another Lie-Yamaguti algebra $(\g,[\cdot,\cdot]_\g,\Courant{\cdot,\cdot,\cdot}_\g)$ with respect to an action $(\rho,\mu)$. Let $T_t=\sum_{i=0}^n\frkT_it^i $ be an order $n$ deformation of $T$. Then $T_t$ is extendable if and only if the cohomology class ~$[\Ob^T]\in \huaH_T^2(\h,\g)$ is trivial, where $\Ob^T=(\Ob_{\rm I}^T,\Ob_{\rm II}^T)\in \frkC_T^2(\h,\g)$ is defined by
\begin{eqnarray*}
\Ob_{\rm I}^T(v_1,v_2)&=&\sum_{i+j=n+1,\atop i,j\geqslant 1}\Big([\frkT_iv_1,\frkT_jv_2]_\g-\frkT_i(\rho(\frkT_jv_1)v_2-\rho(\frkT_jv_2)v_1)\Big),\\
\Ob_{\rm II}^T(v_1,v_2,v_3)&=&\sum_{i+j+k=n+1,\atop i,j,k\geqslant 1}\Big(\Courant{\frkT_iv_1,\frkT_jv_2,\frkT_kv_3}_\g-\frkT_i\big(D(\frkT_jv_1,\frkT_kv_2)v_3\\
~ \nonumber&&+\mu(\frkT_jv_2,\frkT_kv_3)v_1-\mu(\frkT_jv_1,\frkT_kv_3)v_2\big)\Big), \quad \forall v_1,v_2,v_3 \in h.
\end{eqnarray*}
\end{thm}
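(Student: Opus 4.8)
The plan is to run a standard obstruction argument: expand the two deformation equations \eqref{ordern1} and \eqref{ordern2} for the candidate extension $\widetilde{T_t}=T_t+\frkT_{n+1}t^{n+1}$ as power series in $t$ (working modulo $t^{n+2}$), and isolate the single new equation that an extension must satisfy. Since $T_t=\sum_{i=0}^{n}\frkT_it^i$ is already an order $n$ deformation, the coefficients of $t^0,\dots,t^n$ on both sides of \eqref{ordern1} and \eqref{ordern2} agree automatically, so the only obstruction to extendability lives in the coefficient of $t^{n+1}$. First I would write out this coefficient for both equations, separating the terms linear in the new datum $\frkT_{n+1}$ from those assembled solely out of $\frkT_1,\dots,\frkT_n$. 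In \eqref{ordern1} this coefficient is $\sum_{i+j=n+1}[\frkT_iu,\frkT_jv]_\g$ on the left and $\sum_{i+j=n+1}\frkT_i(\rho(\frkT_ju)v-\rho(\frkT_jv)u)+\frkT_{n+1}[u,v]_\h$ on the right, and similarly for \eqref{ordern2} with the convolution sum $\sum_{i+j+k=n+1}$.

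The key observation is that, after this separation, the $\frkT_{n+1}$-linear part reassembles exactly into the differential $\delta^T(\frkT_{n+1})$. Concretely, using the expressions \eqref{newrep1}--\eqref{newrep3} for $\rho_T,\mu_T,D_T$ together with the descent brackets $[\cdot,\cdot]_T,\Courant{\cdot,\cdot,\cdot}_T$ of Corollary \ref{cor:des}, one checks that the terms $[Tu,\frkT_{n+1}v]_\g+[\frkT_{n+1}u,Tv]_\g-T(\rho(\frkT_{n+1}u)v-\rho(\frkT_{n+1}v)u)-\frkT_{n+1}([u,v]_T)$ arising from \eqref{ordern1} are precisely $(\delta_{\rm I}^T(\frkT_{n+1}))(u,v)$ as given by \eqref{cohomology3}, and likewise the $\frkT_{n+1}$-linear part of \eqref{ordern2} is precisely $(\delta_{\rm II}^T(\frkT_{n+1}))(u,v,w)$ as given by \eqref{cohomology4}. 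The remaining terms, in which every factor $\frkT_i$ carries an index $i\ge 1$, are exactly $-\Ob_{\rm I}^T$ and $-\Ob_{\rm II}^T$. Hence the coefficient of $t^{n+1}$ vanishes if and only if $\delta^T(\frkT_{n+1})=-\Ob^T$, i.e. $\Ob^T=\delta^T(-\frkT_{n+1})$. This shows that $T_t$ is extendable if and only if $\Ob^T$ is a $2$-coboundary in $\frkC_T^2(\h,\g)$.

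To upgrade "coboundary" to "trivial cohomology class," it remains to verify that $\Ob^T=(\Ob_{\rm I}^T,\Ob_{\rm II}^T)$ is a $2$-cocycle, so that $[\Ob^T]\in\huaH_T^2(\h,\g)$ is defined in the first place. I would prove $\delta^T(\Ob^T)=0$ by a direct computation, substituting the definitions of $\Ob_{\rm I}^T,\Ob_{\rm II}^T$ and expanding $\delta_{\rm I}^T,\delta_{\rm II}^T$ via \eqref{cohomology1} and \eqref{cohomology2}; the resulting multi-index sums collapse upon repeated use of the lower-order deformation relations (the coefficients of $t^1,\dots,t^n$ in \eqref{ordern1} and \eqref{ordern2}) together with the Lie-Yamaguti axioms \eqref{LY1}--\eqref{fundamental} and the action axioms \eqref{*1}--\eqref{*3}. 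Granting this, $\Ob^T\in\huaZ_T^2(\h,\g)$, and combining with the previous paragraph we conclude that $T_t$ is extendable iff $\Ob^T\in\huaB_T^2(\h,\g)$ iff $[\Ob^T]=0$ in $\huaH_T^2(\h,\g)$.

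The main obstacle is the cocycle verification $\delta^T(\Ob^T)=0$: it is the one genuinely non-formal step, requiring careful bookkeeping of the convolution sums $\sum_{i+j=n+1}$ and $\sum_{i+j+k=n+1}$ and the systematic replacement of the "diagonal" lower-order brackets using the order $n$ deformation equations. By contrast, the identification of the $\frkT_{n+1}$-linear part with $\delta^T(\frkT_{n+1})$ is purely a matter of matching \eqref{newrep1}--\eqref{newrep3} against \eqref{cohomology3}--\eqref{cohomology4}, and is essentially bookkeeping.
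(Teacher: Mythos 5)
Your proposal is correct and follows essentially the same route as the paper: expand the deformation identities for $\widetilde{T_t}=T_t+\frkT_{n+1}t^{n+1}$, compare coefficients of $t^{n+1}$, and observe that the $\frkT_{n+1}$-linear part assembles into $\delta^T(\frkT_{n+1})$ via \eqref{newrep1}--\eqref{newrep3} while the terms with all indices $\geq 1$ give $\Ob^T$, yielding $\Ob^T=-\delta^T(\frkT_{n+1})$ and hence the equivalence with $\Ob^T$ being a coboundary. The one point where you go beyond the paper is the cocycle verification $\delta^T(\Ob^T)=0$, which the paper omits entirely; this check is genuinely needed for $[\Ob^T]\in\huaH_T^2(\h,\g)$ to be well defined when $T_t$ is not assumed extendable (in the extendable direction $\Ob^T$ is a coboundary, hence automatically a cocycle), so flagging it — even though you only sketch the computation — is an improvement on, not a deviation from, the paper's argument.
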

\begin{proof}
Let $\widetilde{T_t}=\sum_{i=0}^{n+1}\frkT_it^i$ be the extension of $T_t$, then for all $u,v,w \in \h$,
\begin{eqnarray}
\label{n order1}[\widetilde{T_t}u,\widetilde{T_t}v]&=&\widetilde{T_t}\Big(\rho(\widetilde{T_t}u)v-\rho(\widetilde{T_t}v)u+[u,v]_\h\Big),\\
\label{n order2}\Courant{\widetilde{T_t}u,\widetilde{T_t}v,\widetilde{T_t}w}&=&\widetilde{T_t}\Big(D(\widetilde{T_t}u,\widetilde{T_t}v)w+\mu(\widetilde{T_t}v,\widetilde{T_t}w)u
-\mu(\widetilde{T_t}u,\widetilde{T_t}w)v+\Courant{u,v,w}_\h\Big).
\end{eqnarray}
Expanding the Eq.\eqref{n order1} and comparing the coefficients of $t^n$ yields that
\begin{eqnarray*}
\sum_{i+j=n+1,\atop i,j \geqslant 0}\Big([\frkT_iu,\frkT_jv]-\frkT_i\big(\rho(\frkT_ju)v-\rho(\frkT_jv)u\big)-\frkT_{n+1}([u,v]_\h)\Big)=0,
\end{eqnarray*}
which is equivalent to
\begin{eqnarray*}
~ &&\sum_{i+j=n+1,\atop i,j \geqslant 1}\Big([\frkT_iu,\frkT_jv]-\frkT_i\big(\rho(\frkT_ju)v-\rho(\frkT_jv)u\big)\Big)\\
~ &&\qquad+[\frkT_{n+1}u,Tv]+[Tu,\frkT_{n+1}v]
-T\Big(\rho(\frkT_{n+1}u)v-\rho(\frkT_{n+1}v)u\Big)-\frkT_{n+1}([u,v]_T)=0,
\end{eqnarray*}
i.e.,
\begin{eqnarray}
\Ob_{\rm I}^T+\delta_{\rm I}^T(\frkT_{n+1})=0.\label{ob:cocy1}
\end{eqnarray}
Similarly, expanding the Eq.\eqref{n order2} and comparing the coefficients of $t^n$ yields that
\begin{eqnarray*}
~ &&\sum_{i+j+k=n+1,\atop i,j,k\geqslant 1}\Big(\Courant{\frkT_iu,\frkT_jv,\frkT_kw}_\g-\frkT_i\big(D(\frkT_ju,\frkT_kv)w+\mu(\frkT_jv,\frkT_kw)u-\mu(\frkT_ju,\frkT_kw)v\big)\Big)\\
~ &&+\Courant{\frkT_{n+1}u,Tv,Tw}+\Courant{Tu,\frkT_{n+1}v,Tw}+\Courant{Tu,Tv,\frkT_{n+1}w}-T\Big(D(\frkT_{n+1}u,Tv)w+D(Tu,\frkT_{n+1}v)w\\
~ &&+\mu(\frkT_{n+1}v,Tw)u+\mu(Tv,\frkT_{n+1}w)u-\mu(\frkT_{n+1}u,Tw)v-\mu(Tu,\frkT_{n+1}w)v\Big)-\frkT_{n+1}(\Courant{u,v,w}_T)=0,
\end{eqnarray*}
which implies that
\begin{eqnarray}
\Ob_{\rm II}^T+\delta_{\rm II}^T(\frkT_{n+1})=0.\label{ob:cocy2}
\end{eqnarray}
From \eqref{ob:cocy1} and \eqref{ob:cocy2}, we get
$$\Ob^T=-\delta^T(\frkT_{n+1}).$$
Thus, the cohomology class $[\Ob^T]$ is trivial.

Conversely, suppose that the cohomology class $[\Ob^T]$ is trivial, then there exists $\frkT_{n+1}\in \huaC_T^1(V,\g)$, such that ~$\Ob^T=-\delta^T(\frkT_{n+1}).$ Set $\widetilde{T_t}=T_t+\frkT_{n+1}t^{n+1}$. Then for all $0 \leqslant s\leqslant n+1$, ~$\widetilde{T_t}$ satisfies
\begin{eqnarray*}
\sum_{i+j=s}\Big([\frkT_iu,\frkT_jv]-\frkT_i\big(\rho(\frkT_ju)v-\rho(\frkT_jv)u\big)-\frkT_s([u,v]_\h)\Big)=0,\\
\sum_{i+j+k=s}\Big(\Courant{\frkT_iu,\frkT_jv,\frkT_kw}-\frkT_i\big(D(\frkT_ju,\frkT_kv)w+\mu(\frkT_jv,\frkT_kw)u-\mu(\frkT_ju,\frkT_kw)v\big)-\frkT_s(\Courant{u,v,w}_\h)\Big)=0.
\end{eqnarray*}
which implies that $\widetilde{T_t}$ is an order $n+1$ deformation of $T$. Hence it is a extension of $T_t$.
\end{proof}

\begin{defi}
Let $T:\h\longrightarrow\g$ be a a relative Rota-Baxter operator of weight $1$ from a Lie-Yamaguti algebra $(\h,[\cdot,\cdot]_\h,\Courant{\cdot,\cdot,\cdot}_\h)$ to another Lie-Yamaguti algebra $(\g,[\cdot,\cdot]_\g,\Courant{\cdot,\cdot,\cdot}_\g)$ with respect to an action $(\rho,\mu)$, and $T_t=\sum_{i=0}^n\frkT_it^i$ be an order $n$ deformation of $T$. Then the cohomology class $[\Ob^T]\in \huaH_T^2(\h,\g)$ defined in Theorem {\rm \ref{ob}} is called the {\bf obstruction class } of $T_t$ being extendable.
\end{defi}

\begin{cor}
Let $T:\h\longrightarrow\g$ be a a relative Rota-Baxter operator of weight $1$ from a Lie-Yamaguti algebra $(\h,[\cdot,\cdot]_\h,\Courant{\cdot,\cdot,\cdot}_\h)$ to another Lie-Yamaguti algebra $(\g,[\cdot,\cdot]_\g,\Courant{\cdot,\cdot,\cdot}_\g)$ with respect to an action $(\rho,\mu)$. If $\huaH_T^2(\h,\g)=0$, then every $1$-cocycle in $\huaZ_T^1(\h,\g)$ is the infinitesimal of some formal deformation of $T$.
\end{cor}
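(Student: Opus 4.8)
The plan is to construct the formal deformation order by order, using Theorem \ref{ob} as the engine that converts the vanishing of $\huaH_T^2(\h,\g)$ into extendability at every stage. So this is an induction on the order, seeded by the given cocycle.

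First I would settle the base case: given a $1$-cocycle $\frkT_1\in\huaZ_T^1(\h,\g)$, the truncation $T_t=T+t\frkT_1$ is an order $1$ deformation of $T$. To verify this I would expand the defining identities \eqref{ordern1} and \eqref{ordern2} modulo $t^2$. The coefficient of $t^0$ merely recovers that $T$ is a relative Rota-Baxter operator of weight $1$, while the coefficients of $t^1$ are exactly $\delta_{\rm I}^T(\frkT_1)=0$ and $\delta_{\rm II}^T(\frkT_1)=0$; this identification is obtained by unwinding the definitions \eqref{newrep1}--\eqref{newrep3} of $\rho_T,\mu_T,D_T$ together with the descent brackets $[\cdot,\cdot]_T,\Courant{\cdot,\cdot,\cdot}_T$ from Corollary \ref{cor:des}. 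Hence the order $1$ condition is precisely the $1$-cocycle condition $\partial\frkT_1=0$, which holds by hypothesis.

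Next I would carry out the inductive extension. Assume $\frkT_1$ has been extended to an order $n$ deformation $T_t=\sum_{i=0}^n\frkT_it^i$ of $T$. By Theorem \ref{ob}, this deformation extends to order $n+1$ if and only if its obstruction class $[\Ob^T]\in\huaH_T^2(\h,\g)$ vanishes. Since $\huaH_T^2(\h,\g)=0$ by assumption, this class is automatically trivial, so Theorem \ref{ob} furnishes a $1$-cochain $\frkT_{n+1}\in\frkC_T^1(\h,\g)$ with $\Ob^T=-\delta^T(\frkT_{n+1})$, and therefore $\widetilde{T_t}=T_t+\frkT_{n+1}t^{n+1}$ is an order $n+1$ deformation. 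Iterating produces a sequence $(\frkT_i)_{i\geqslant 1}$ every truncation of which is a deformation of the corresponding order, and assembling these yields a formal deformation $T_t=\sum_{i=0}^\infty\frkT_it^i$ of $T$ whose infinitesimal is the prescribed cocycle $\frkT_1$.

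The only delicate point --- and where the real content lives --- is that $[\Ob^T]$ is a genuine element of $\huaH_T^2(\h,\g)$, i.e. that $\Ob^T$ is itself a $2$-cocycle; otherwise its class would be meaningless. I would treat this as already built into Theorem \ref{ob}, which both asserts $\Ob^T\in\huaZ_T^2(\h,\g)$ and identifies triviality of its class with extendability. Granting that, the hypothesis $\huaH_T^2(\h,\g)=0$ forces every such class to vanish, so no obstruction can ever arise and the induction never stalls; what remains is only the bookkeeping of the power-series induction, with no computation beyond what Theorem \ref{ob} already provides.
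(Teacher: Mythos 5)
Your proof is correct and is exactly the argument the paper intends: the corollary is stated as an immediate consequence of Theorem \ref{ob}, obtained by the standard induction in which the $1$-cocycle condition gives the order $1$ deformation and the vanishing of $\huaH_T^2(\h,\g)$ kills the obstruction class at every subsequent order. Your remark that one must know $\Ob^T\in\huaZ_T^2(\h,\g)$ for the class to be well defined is the right caveat, and deferring it to Theorem \ref{ob} is consistent with how the paper itself treats that point.
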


\vspace{2mm}

 \noindent
 {\bf Acknowledgements.}
Xu was partially supported by NSFC grant 12201253 and Natural Science Foundation of Jiangsu Province BK20220510.
Qiao was partially supported by NSFC grant 11971282.
\vspace{2mm}
\emptycomment{
\noindent\textbf{Data Availability Statement.} Data sharing is not applicable to this article as no new data were created
or analyzed in this study.
\vspace{2mm}

\noindent\textbf{Conflict of interest.} The authors declared that they have no conflicts of interest
to this work.}


\begin{thebibliography}{a}















\bibitem{Bai-Bellier-Guo-Ni}
C. Bai, O. Bellier, L. Guo and X. Ni, Splitting of operations, Manin products, and Rota-Baxter operators, \emph{Int. Math. Res. Not. IMRN}  (2013),  no. 3, 485-524.

\bibitem{BGN}
C. Bai, L. Guo and X. Ni, Nonabelian generalized Lax pairs,the classical Yang-Baxter equation and Post-Lie algebras. \emph{Comm. Math. Phys.} 297(2010), 553-596.




\bibitem{bala}
D. Balavoine. Deformations of algebras over a quadratic operad. Operad: Proceedings of Renaissance Conferences (Hartford, CT/Luminy, 1995), \emph{Contemp. Math. Amer. Math. Soc.}, Providence, vol.{\bf 202}, (1997) 207-234.


\bibitem{Ba} G. Baxter, An analytic problem whose solution follows from a simple algebraic identity, \emph{Pacific J. Math.}  {\bf 10}  (1960), 731-742.

\bibitem{B.B.M}
P. Benito, M. Bremner, and S. Madariaga. Symmetric matriaces, orthogonal Lie algebras and Lie-Yamaguti algebras, \emph{Linear Multilinear Algebra} {\bf 63} (2015)
1257-1287.

\bibitem{B.D.E}
P. Benito, C. Draper, and A. Elduque. Lie-Yamaguti algebras related to $\g_2$, \emph{J. Pure Appl. Algebra} {\bf 202} (2005) 22-54.

\bibitem{B.E.M1}
P. Benito, A. Elduque, and F. Mart$\acute{i}$n-Herce. Irreducible Lie-Yamaguti algebras, \emph{J. Pure Appl. Algebra} {\bf 213} (2009) 795-808.

\bibitem{B.E.M2}
P. Benito, A. Elduque, and F. Mart$\acute{i}$n-Herce. Irreducible Lie-Yamaguti algebras of generic type, \emph{J. Pure Appl. Algebra} {\bf 215} (2011) 108-130.




\bibitem{CP}
V. Chari and A. Pressley. A Guide to Quantum Groups, Cambridge University Press, 1994.


\bibitem{Dor}
I. Dorfman. Dirac structures and integrability of nonlinear evolution equations, Wiley Chichester, 1993.





\bibitem{Gerstenhaber1}
M. Gerstenhaber. The cohomology structure od an associative ring, \emph{Ann. Math.} {\bf 78} (1963) 267-288.

\bibitem{Gerstenhaber2}
M. Gerstenhaber. On the deformations of rings and algebras, \emph{Ann. Math.}(2) {\bf 79} (1964) 59-103.

\bibitem{Gerstenhaber3}
M. Gerstenhaber. On the deformations of rings and algebras II, \emph{Ann. Math.} {\bf 84} (1966) 1-19.

\bibitem{Gerstenhaber4}
M. Gerstenhaber. On the deformations of rings and algebras III, \emph{Ann. Math.} {\bf 88} (1968) 1-34.

\bibitem{Gerstenhaber5}
M. Gerstenhaber. On the deformations of rings and algebras IV, \emph{Ann. Math.} {\bf 99} (1974) 257-276.

\bibitem{Gub}
L. Guo,  An introduction to Rota-Baxter algebra. Surveys of Modern Mathematics, 4. International Press, Somerville, MA; Higher Education Press, Beijing, 2012. xii+226 pp.

\bibitem{Hart}
R. Hartshore. Deformation Theory, \emph{Graduate Texts in Math.} 257 (2010) Springer, Berlin.


\bibitem{HSZ}
S. Hou, Y. Sheng, and Y. Zhou, Relative Rota-Baxter operators of nonzero weights on $3$-Lie algebras and $3$-post-Lie algebras, \emph{J. Agebra} {\bf 615} (2023) 103-129.

\bibitem{IK}
K. Iohara, Y. Koga. Representation Theory of the Virasoro Algebra, Springer Monographs in Mathematics. Springer-Verlag London, Ltd., London (2011).


\bibitem{Weinstein}
M. K. Kinyon and A. Weinstein. Leibniz algebras, Courant algebroids and multiplications on reductive homogeneous spaces, \emph{Amer. J. Math.} {\bf 123} (2001)
525-550.

\bibitem{Kodaira}
K. Kodaira and D. Spencer. On deformations of complex analytic structures I and II, \emph{Ann. Math.} {\bf 67} (1958) 328-466.




\bibitem{Kupershmidt}
B. A. Kupershmidt, What a classical $r$-matrix really is, \emph{J. Nonlinear Math. Phys.} {\bf 6} (1999) no.4, 448-488.

\bibitem{L.CHEN}
J. Lin, L. Chen, and Y. Ma. On the deformaions of Lie-Yamaguti algebras, \emph{Acta. Math. Sin. (Engl. Ser.)} {\bf 31} (2015) 938-946.

\bibitem{LiuPeiXia}
D. Liu, Y. Pei, and L. Xia. A category of restricted modules for the Ovisenko-Roger algebra, \emph{Algebr. Represent. Theory}, {\bf 25} (2022), no.3, 777-791.




\bibitem{LSZB}
J. Liu, Y. Sheng, Y. Zhou, and C. Bai. Nijenhuis operators on $n$-Lie algebras, \emph{Commun. Theor. Phys. (Beijing)}, {\bf 65}(2016) no.6, 659-670.





\bibitem{GLSZ}
L. Gou, Y. Li, Y. Sheng, and G. Zhou, Cohomologies, extensions and deformations of differential algebras of arbitrary weight, \emph{Theory Appl. Categ.} {\bf 38} (2022) Paper No. 37, 1409每1433.

\bibitem{Lue}
A. Lue. Crossed homomorphisms of Lie algebras, \emph{Proc. Cambridge Philos. Soc.} {\bf 62}(1966), 577-581.

\bibitem{Ma Y}
Y. Ma, L. Chen, and J. Lin. One-parameter formal deformations of Hom-Lie-Yamaguti algebras, \emph{J. Math. Phys.} {\bf 56} (2015) 011701, 12pp.

\bibitem{Nij1}
A. Nijenhuis and R. Richardson. Cohomology and deformations in graded Lie algebras, \emph{Bull. Am. Math. Soc.} {\bf 72} (1966) 1-29.


\bibitem{Nomizu}
K. Nomizu. Invariant affine connections on homogeneous spaces, \emph{Amer. J. Math.} {\bf76} (1954) 33-65.


\bibitem{PSTZ}
Y. Pei, Y. Sheng, R. Tang, and K. Zhao. Actions of monoidal categories and representations of Cartan type Lie algebras, \emph{J. Inst. Math. Jussieu} (2022) Doi:10.1017/S147474802200007X.



\bibitem{T.S}
Y. Sheng and R. Tang. Symplectic, product, and complex structures on 3-Lie algebras. \emph{J. Algebra} {\bf 508} (2018) 256-300.



\bibitem{Sheng Zhao}
Y. Sheng, J. Zhao, and Y. Zhou. Nijnhuis operators, product structures and complex structures on Lie-Yamaguti algebras, \emph{J. Algebra Appl.}  Vol. 20, No. 8 (2021) 2150146, 22pages.



\bibitem{Takahashi}
N. Takahashi. Modules over quadratic spaces and representations of Lie-Yamaguti algebras, \emph{J. Lie Theory} {\bf 31} (2021) no.4, 897-932.

\bibitem{TBGS}
R. Tang, C. Bai, L. Guo, and Y. Sheng. Deformations and their controlling cohomologies of $\huaO$-operators, \emph{Comm. Math. Phys.} {\bf 368} (2019) 655-700.

\bibitem{THS}
R. Tang, S. Hou, and Y. Sheng. Lie 3-algebras and deformations of relative Rota-Baxter operators on 3-Lie algebras,
\emph{J. Algebra}  {\bf567}  (2021) 37-62.

\bibitem{T.S2}
R. Tang, Y. Sheng, and Y. Zhou. Deformations of relative Rota-Baxter operators on Leibniz algebra, \emph{Int. J. Geom. Methods in Mordern Phys.} Vol. 17, No. 12 (2020) 21pp.

\bibitem{WZ}
K. Wang and G. Zhou, Deformations and homotopy theory of Rota-Baxter algebras of any weight, arXiv: 2108. 06744.

\bibitem{Yamaguti1}
K. Yamaguti. On the Lie triple system and its generalization, \emph{J. Sci. Hiroshima Univ. Ser. A} {\bf 21} (1957/1958) 155-160.

\bibitem{Yamaguti2}
K. Yamaguti. On cohmology groups of general Lie triple systems, \emph{Kumamoto J. Sci. A} {\bf 8} (1967/1969) 135-146.

\bibitem{Yamaguti3}
K. Yamaguti. On the cohomology space of Lie triple systems, \emph{Kumamoto J. Sci. A} {\bf 5(1)} (1960) 44-52.



\bibitem{Zhang1}
T. Zhang and J. Li. Deformations and extension of Lie-Yamaguti algebras, \emph{Linear Multilinear Algebra} {\bf 63} (2015) 2212-2231.

\bibitem{Zhang2}
T. Zhang and J. Li. Representations and cohomologies of Lie-Yamaguti algebras with applications, \emph{Colloq. Math.} {\bf 148} (2017) 131-155.

\bibitem{ZQ1}
J. Zhao and Y. Qiao. Cohomology and relative Rota-Baxter-Nijenhuis structures on $\mathsf{LieYRep}$ pairs, \emph{J. Geom. Phys.}{\bf 186} (2023) No. 104749, 24pp.

\bibitem{ZQ2}
J. Zhao and Y. Qiao. Cohomology and deformations of relative Rota-Baxter operators on Lie-Yamaguti algebras, arXiv:2204.04872.

\bibitem{ZQX}
J. Zhao, Y. Qiao, and S. Xu, Cohomology and deformations of crossed homomorphisms between Lie-Yamaguti algebras, arXiv: 2303.15744.



























\end{thebibliography}
\end{document}